\DeclareMathAlphabet{\mathpzc}{OT1}{pzc}{m}{it}
\begin{document}

\title[DIRK schemes]{Diagonally implicit Runge-Kutta schemes: \\
Discrete energy-balance laws and compactness properties}

\author[A.J.~Salgado]{Abner J. Salgado}
\address[A.J.~Salgado]{Department of Mathematics, University of Tennessee,
Knoxville TN 37996, USA.}
\email{asalgad1@utk.edu}
\urladdr{https://sites.google.com/utk.edu/abnersg/}

\author[I. Tomas]{Ignacio Tomas}
\address[I. Tomas]{Sandia National Laboratories\footnote{Sandia National
Laboratories is a multimission laboratory managed and operated by National
Technology \& Engineering Solutions of Sandia, LLC, a wholly owned subsidiary
of Honeywell International Inc., for the U.S. Department of Energy’s National
Nuclear Security Administration under contract DE-NA0003525. This paper
describes objective technical results and analysis. Any subjective views or
opinions that might be expressed in the paper do not necessarily represent the
views of the U.S. Department of Energy or the United States
Government.}\footnote{SAND2022-7110 O},
Center for Computing Research, Albuquerque NM, 87185-1320,
USA.}
\email[I. Tomas]{itomas@sandia.gov}
\urladdr{http://www.cs.sandia.gov/ccr-itomas}

\begin{abstract}
We study diagonally implicit Runge-Kutta (DIRK) schemes when applied to
abstract evolution problems that fit into the Gelfand-triple framework. We
introduce novel stability notions that are well-suited to this setting and
provide simple, necessary and sufficient, conditions to verify that a DIRK
scheme is stable in our sense and in Bochner-type norms. We use several
popular DIRK schemes in order to illustrate cases that satisfy the required
structural stability properties and cases that do not. In addition, under some
mild structural conditions on the problem we can guarantee compactness of
families of discrete solutions with respect to time discretization.
\end{abstract}

\keywords{Runge Kutta methods; diagonally implicit; parabolic problems;
hyperbolic problems; stability; compactness.}

\subjclass[2010]{65L05, 
65M12,                  
47J35,                  
65L06,                  
34D20,                  
35K20,                  
35L20,                  
58D25,                  
35K90,                  
35L10,                  
35K15,                  
35L15.                  
}

\maketitle

\section{Introduction}\label{sec:Intro}

The purpose of this work is the study of structure preserving time-marching
schemes for a class of evolution problems in Banach spaces; which essentially
are used to describe possibly degenerate parabolic and hyperbolic initial
boundary value problems. At this stage, it is enough for our purposes to say
that, given a Banach space $\mathbb{V}$, a final time $\tf>0$, and a mapping
$\mathcal{F}:(0,\tf) \times \mathbb{V} \to \mathbb{V}^*$, we seek for $u :
[0,\tf]
\to \mathbb{V}$ that solves
\begin{equation}
\label{eq:TheODE}
  \frac{\diff u}{\diff t} = \mathcal{F}(t,u) , \ t \in (0,\tf],  \qquad u(0)  =u_0.
\end{equation}
As usual, $\mathbb{V}^*$ denotes the dual of $\mathbb{V}$, and
$\langle \cdot , \cdot \rangle$ is the duality pairing; see
Appendix~\ref{AppExampleProb} for a, far from exhaustive, list of example
problems of interest. Solutions of problem \eqref{eq:TheODE}
satisfy the following \emph{energy-balance law}:
\begin{align}\label{AbstractBalance}
  \frac{1}{2} |u(\tf) |^2 - \int_{0}^{\tf} \langle\mathcal{F}(t,u(t)), u(t)
\rangle \diff t
  \leq \frac{1}{2} |u(0)|^2,
\end{align}
where the notation is to be specified. Our goal in this work is to find
numerical schemes that mimic this law. For instance, given a time partition $0 =
t_0 < t_1 < \cdots < t_\frakN = \tf$ with $\dt_n = t_{n+1} - t_{n}> 0$ being the
local timestep size, an approximate solution to \eqref{eq:TheODE} could be
computed by the backward Euler scheme. Ignoring solvability issues, we obtain
$\{u_n \approx u(t_n)\}_{n=1}^\frakN$ and it is immediate to
see that
\begin{align}\label{DiscEnergyBalance}
  \frac{1}{2} |u_\frakN |^2 - \sum_{n = 1}^\frakN \dt_n
\langle\mathcal{F}(t_n,u_n), u_n \rangle \leq \frac{1}{2}|u_0 |^2,
\end{align}
where, once again,  the notation is to be specified.

There are several reasons why discrete energy laws such as
\eqref{DiscEnergyBalance} are important. From the practical point of view, these
are a non-perturbative form of stability: they do not rely on any smallness
assumption, linearization, asymptotic argument, or proximity to some
equilibrium state. For complex PDE problems, where a thorough quantitative and
qualitative analysis of the solution, and corresponding numerical scheme, are
far out of reach, satisfaction of a discrete energy-balance is usually an
excellent surrogate to stability.

On the other hand, from a theoretical point of view, it is often the case that
with the help of discrete energy-balance laws one can assert compactness of
families of numerical solutions. Once again, for complex PDE problems, (weak)
convergence of discrete solutions via compactness is all one can hope to achieve
without introducing additional assumptions. Examples of a successful application
of this approach are numerous. The interested reader is referred to the following
references: elliptic problems \cite{Gidas1981,Evans1987}, hyperbolic
\cite{Tartar1979, Necas1996}, and parabolic \cite{Lions1969}; where one of the
main tools is the Aubin-Lions compactness lemma \cite{Lions1969}, or
some refinement of it \cite{Simon1987, Jungel2012, Jungel2014, Andrei2011,
Gllou2012}.

The backward Euler scheme is one of the simplest time-stepping schemes, yet it
is often possible to prove that it possesses suitable energy-balance laws; see
\eqref{DiscEnergyBalance}. Its major drawback, however, is that it is only first
order accurate. In order to remedy this issue, higher order generalizations,
like Galerkin-in-time schemes, have been developed and analyzed; see for
instance \cite{Lubich1995,Makri2006,Akri2011,Walk2014,Hochbruck2018,Walk2010,
Ern2019} and references therein. These schemes can also be shown to possess
energy-balance laws, and are of arbitrary high order. However, their practical
impact beyond academic examples has been rather limited; see
\cite{Pazner2017, Crockatt2019} for a few exceptions. This is due to the
fact that Galerkin-in-time methods are algebraically equivalent to full-tableau
Runge-Kutta (RK) methods \cite{Akri2011}. As such, they require the solution of
linear algebraic systems where the system matrix is of size $s N \times s N$
for each Newton iteration: here $s$ is the number of stages of the RK method
and $N$ is the number of degrees of freedom of the spatial discretization, see
for instance \cite{Smears2017}.

Diagonally implicit Runge-Kutta (DIRK) methods lie between these two extreme
possibilities (backward Euler and Galerkin-in-time discretizations). DIRK methods
offer significantly added computational benefit, since they only require the
solution of linear systems of size $N \times N$ at each stage, as well as higher
order accuracy, see \cite{Crouzeix1976,Alexander1977, Carpenter2016}. Their
popularity is, perhaps, in big part due to the paper \cite{Ascher1997} which has
been extremely influential in the scientific computing community. The
rigorous study of the mathematical properties of DIRK schemes, however, seems to
be rather underdeveloped. This is particularly the case if we are interested in
the numerical approximation of PDEs satisfying a discrete energy-balance law
such as \eqref{AbstractBalance}. This brings us to the main motivation for
our current work: we wish to present classes of DIRK schemes for which we can
prove discrete energy-balance laws, and study under which conditions the
solution to these schemes enjoy suitable compactness properties.

To achieve these goals we organize our presentation as follows. Notation and
the functional framework we shall operate under are introduced in
Section~\ref{sec:Prelims}. Here we also discuss the minimal set of assumptions
we require on the mapping $\scrF$. Section~\ref{sec:DIRK} then begins by
introducing the general form of DIRK schemes and making some comments regarding
their implementation when applied to PDEs. Two notions of balance laws: discrete
energy-balance, and dissipative discrete energy-balance, respectively, are
introduced here for DIRK schemes; their importance and meaning is also
discussed. This section then presents an exhaustive
literature review regarding the existing (algebraic) notions of stability and
why we believe these are not suitable for our purposes. A list of some popular
two- and three-stage DIRK schemes finalizes this section. The core of our work
is Sections~\ref{sec:TwoStage} and \ref{sec:ThreeStage} where, for two- and
three-stage schemes, respectively, we explore the existence of discrete
energy-balance laws. We arrive at a property, which we call \emph{remarkable
stability} which immediately implies the existence of a dissipative
discrete energy law for a DIRK scheme. We note that, given the Butcher tableau
of an $s$-stage DIRK scheme, verifying remarkable stability only
requires some algebraic manipulations using the entries of the tableau, and the
solution of an algebraic eigenvalue problem of size $s$. Each section concludes
by verifying remarkable stability for a list of widely used DIRK schemes.
Finally, in Section~\ref{sec:AisCoercive}, we refine the assumptions on $\scrF$
to arrive at our strongest notion: discrete Bochner-stability. We show that
every remarkably stable scheme is discretely Bochner-stable, and that (families
of) solutions to discretely Bochner-stable schemes possess suitable compactness
properties.

Finally, we believe that one important feature of our exposition is its
simplicity. The main results of this manuscript rely, in essence, on the simple
polarization identity
\begin{equation}\label{FundIdentity}
  a(a-b) = \tfrac{1}{2}|a|^2 - \tfrac{1}{2}|b|^2 + \tfrac{1}{2} |a-b|^2,
\end{equation}
and some algebraic manipulations. At times these manipulations may be long and
tedious, and some assistance from a computer algebra software
package\footnote{In this manuscript we used Mathematica\copyright.} may be
desired. Despite of this, these are nothing but algebraic manipulations. When
describing compactness properties, well-known elementary order conditions of RK
schemes may be necessary, which are summarized in Appendix~\ref{AppOrderCond}.
No high level tools or specialized notions of algebraic stability for ODE
solvers are used in our work.

\section{Preliminaries}\label{sec:Prelims}

Here we describe the framework and assumptions we shall operate under.
If $p \in (1,\infty)$ its H\"older conjugate is $p'=p/(p-1)$. We extend this
notation such that $1'=\infty$, $\infty'=1$, and $1/p+1/p'=1$ for all $p \in
[1,\infty]$. By $\tf >0$ we denote a final time.

By $A \lesssim B$ we shall mean $A \leq c B$ for a nonessential constant $c$
that may change at each occurrence.

\subsection{Functional framework}\label{FunctAnal}

Throughout our work we shall assume that $\mathbb{H}$ is a separable Hilbert
space with inner product $(\cdot,\cdot)$ and norm $|\cdot|$. By $\mathbb{V}$
we denote a Banach space, and its norm is denoted by $\| \cdot \|$. The dual of
$\mathbb{V}$ is denoted by $\mathbb{V}^*$, the duality pairing between them is
denoted by $\langle \cdot, \cdot\rangle$. The norm in $\mathbb{V}^*$ is denoted
by $\| \cdot \|_*$.

We shall assume that $\mathbb{V} \subset \mathbb{H} \subset \mathbb{V}^*$
is a Gelfand triple. We recall that in this setting the duality pairing is an
extension of the inner-product. In other words, every $v \in \mathbb{H}$
defines an element of $\mathbb{V}^*$ whose action is defined by
\begin{equation}
\label{eq:NeedForGelfand}
  \langle v, w \rangle = (v,w), \quad \forall w \in \mathbb{V}.
\end{equation}
This identification will be repeatedly used in our discussion.

Let $\mathbb{W}$ be a separable Banach space with norm $\|\cdot\|_{\mathbb{W}}$
and $p \in [1,\infty]$. For $w:[0,\tf]\rightarrow \mathbb{W}$ measurable, we
define
\[
  \|w \|_{L^p(0,\tf;\mathbb{W})} = \begin{dcases}
                                     \left( \int_{0}^{\tf}
\|w(s)\|_{\mathbb{W}}^p \diff s\right)^{1/p}, & p<\infty, \\
                                      \esssup_{ s \in [0,\tf]}
\|w(s)\|_{\mathbb{W}}, & p = \infty.
                                   \end{dcases}
\]
Then, we define the Bochner space
\[
  L^p(0,\tf;\mathbb{W}) = \left\{ w:[0,\tf]\rightarrow \mathbb{W} \ \middle|
\|w(s)\|_{L^p(0,\tf;\mathbb{W})}< \infty \right\},
\]
which is Banach for the norm $\| \cdot \|_{L^p(0,\tf;\mathbb{W})}$. The space
of functions $w:[0,\tf] \to \mathbb{W}$ that are continuous is denoted by
$\mathcal{C}([0,\tf];\mathbb{W})$. We endow this space with the
$L^\infty(0,\tf;\mathbb{W})$-norm.

\subsection{Initial value problems}

With all the previous notation and preparations at hand we can proceed to
rigorously describe the class of problems we are interested in approximating. We
assume that the initial condition satisfies $u_0 \in \mathbb{H}$, and that the
slope function satisfies $\mathcal{F}:[0,\tf] \times \mathbb{V} \to
\mathbb{V}^*$. We then seek for $u:[0,\tf] \to \mathbb{H}$ such that
$\tfrac{\diff u}{\diff t} :[0,\tf] \to \mathbb{V}^*$ and
solves \eqref{eq:TheODE}.

We will also assume that $\mathcal{F}$ can be split into an autonomous and
purely non-autonomous time-dependent part as
\[
  \mathcal{F}(t,w) = f(t) - \mathcal{A}(w), \qquad \forall t \in [0,\tf], \quad
\forall w \in \mathbb{V},
\]
where $f:[0,\tf] \to \polV^*$ and $\mathcal{A}: \mathbb{V} \to \mathbb{V}^*$.
In the context of PDEs and/or ODEs on graphs, we assume that nonhomogeneous
boundary data can always be assimilated into $f$.

The minimal set of assumptions we shall impose on the mapping $\mathcal A$ are
as follows.
\begin{enumerate}[$\bullet$]
  \item \emph{Nonnegativity:} The mapping $\mathcal A$ is nonnegative, i.e.,
  \begin{equation}
  \label{positivity}
    \langle \mathcal{A}(w), w \rangle \geq 0, \quad \forall w \in \mathbb{V}.
  \end{equation}

  \item \emph{Local-solvability.} We assume that, for every
$F \in \mathbb{V}^*$, there exists $\beta > 0$ such that for every $\gamma \in
(0,\beta]$ the problem
  \begin{equation}
  \label{eq:AGenericStage}
    v + \gamma \mathcal{A}(v) = F,
  \end{equation}
  has a unique solution $v \in \mathbb{V}$. This assumption guarantees that
nonlinear problems associated to each stage in DIRK schemes have a unique
solution, possibly under some timestep size constraint.
\end{enumerate}

As we shall see below, property \eqref{positivity} is sufficient in order to
derive discrete energy-balance laws for DIRK schemes. While a discrete energy
balance law is not, in general, enough to prove a priori bounds in Bochner-type
norms for the discrete solution or its time derivative, \eqref{positivity}
covers a large family of relevant problems; see Appendix~\ref{AppExampleProb}.
Stronger assumptions on $\calA$ will be imposed in
Section~\ref{sec:AisCoercive}, and these will allow us to establish a priori
bounds in Bochner-type norms.

\section{DIRK schemes}\label{sec:DIRK}

In this section we recall some general notions related to RK schemes; and,
in particular, present some details regarding DIRK schemes. We will also detail
the main stability notion for these schemes that we shall pursue.

We recall that RK schemes are uniquely characterized by their so-called Butcher
tableau
\begin{equation}
\label{eq:TheDIRK}
  \begin{array}{c|c}
    \bv{c} & \bv{A} \\
    \hline
      & \bv{b}^\intercal
  \end{array}\ .
\end{equation}
Here $s \in \mathbb{N}$ is the number of stages,
$\bv{A} = [a_{ij}] \in \mathbb{R}^{s \times s}$ is the matrix of coefficients,
$\bv{c} =[c_i]\in [0,1]^s$ are the pseudo-collocation times, and $\bv{b} =[b_i]
\in [0,1]^s$ are the weights. For the sake of completeness, necessary order
conditions for \eqref{eq:TheDIRK} are summarized in
Appendix~\ref{AppOrderCond}. We remind the reader that a DIRK scheme is one
where the matrix $\bv{A}$ is lower triangular.

Let us now, to make things precise, detail how a DIRK scheme is applied to
\eqref{eq:TheODE}. Let $\frakN \in \polN$ be the number of steps. We introduce a
partition $\calP_\frakN = \{t_n\}_{n=0}^\frakN$ of $[0,\tf]$, i.e.,
\[
  0 = t_0  < \cdots < t_\frakN = \tf,
\]
and set $\dt_n = t_{n+1} - t_n$, for $n=0, \ldots, \frakN-1$ to be the local
timestep. Starting from $u_0$ we will compute the sequence $\{u_n\}_{n=1}^\frakN
\subset \polV$ such that $u_n \approx u(t_n)$ as follows. For $n \geq 0$ we
solve the following equations in $\polV^*$
\begin{equation}
\label{eq:DIRKappliedtoPDE}
  \begin{dcases}
    v_{n,i} = u_n + \dt_n \sum_{j=1}^i a_{ij} \scrF_{n,j}, &i = 1, \ldots, s, \\
    u_{n+1} = u_n + \dt_n \sum_{j=1}^s b_j \scrF_{n,j},
  \end{dcases}
\end{equation}
where we introduced a shorthand notation $\scrF_{n,j}$ defined by
\[
  f_{n,j} = f(t_n + c_j \dt_n ), \qquad
  \scrF_{n,j} = f_{n,j} - \scrA(v_{n,j}).
\]
For fixed $n \in \{1, \ldots, \frakN\}$, the quantities $\{v_{n,i}\}_{i=1}^s$
are called the stages.

Notice that a generic step in \eqref{eq:DIRKappliedtoPDE} requires, given
$F \in \polV^*$, to find $v \in \polV$ that solves \eqref{eq:AGenericStage}
with, for some $i \in \{1, \ldots, s\}$, $\gamma= a_{ii}\dt_n$. Thus, owing to
the local solvability condition, for this scheme to be well-defined for any
partition $\calP_\frakN$, it is sufficient to require that $a_{ii}>0$. For this
reason, in what follows we shall assume that this is always the case for a DIRK
scheme.

Finally, observe that, although the equations are posed in $\polV^*$, it is not
difficult to show that, for all $n\geq 0$, $\{v_{n,i}\}_{i=1}^s \cup \{u_{n+1}
\} \subset \polV$.

\begin{remark}[smoothness of the right hand side]
We comment that usually the theory regarding well-posedness of
\eqref{eq:TheODE}, only requires the right hand side $f$ to be such that $f \in
L^{r}(0,\tf;\polV^*)$ for some $r>1$. This makes, in general, the quantities
$f_{n,j}$, for $n\geq0$ and $j = 1, \ldots, s$, meaningless, as point
evaluations of $f$ are not possible. This usually is circumvented by replacing
$f$ by a suitable approximation $f_{\calP_\frakN} \in
\mathcal{C}([0,\tf];\polV^*)$. In order to avoid unnecessary clutter of the
notation we will ignore this issue.
\end{remark}

\begin{remark}[interpretation]\label{rem:interpretationWeak}
We note that \eqref{eq:DIRKappliedtoPDE} must be understood by its action
against suitable ``test functions''. Owing to \eqref{eq:NeedForGelfand}, a
generic stage $v_{n,i} \in \polV$ for $i =1, \ldots, s$ must be such that, for
every $w \in\polV$,
\[
  ( v_{n,i} , w ) + \tau_n a_{ii} \langle \calA(v_{n,i}), w \rangle = (
u_n, w ) + \tau_n\sum_{j=1}^i a_{ij} \langle f(t_n + c_j \tau_n), w
\rangle - \tau_n\sum_{j=1}^{i-1} a_{ij} \langle \calA( v_{n,j}), w
\rangle.
\]
\end{remark}

\subsection{Discrete energy-balance} \label{sub:discenergy}

As we described above, $s$-stage DIRK methods are characterized by its tableau,
and are such that the matrix $\bv{A}$ is lower triangular and has positive
diagonal entries. The following definition will be our main notion of stability
for DIRK schemes.

\begin{definition}[discrete energy-balance]\label{DiscBalanceDef}
We say that a DIRK scheme with tableau \eqref{eq:TheDIRK}, where $\bv{A}$ is
lower triangular and has positive diagonal entries, satisfies a discrete
energy-balance if, for any $\frakN \in \polN$, every partition $\calP_\frakN$,
and all $n \in \{0, \ldots, \frakN-1\}$, we have that
\begin{equation}\label{AbstractDiscLaw}
  \frac12 |u_{n+1}|^2 + \sum_{i=1}^{s+1} \delta_i |v_{n,i} - v_{n,i-1}|^2 -
\dt_n \sum_{i=1}^s \nu_{ii}\langle \scrF_{n,i}, v_{n,i} \rangle =
  \frac12 |u_n|^2 + \dt_n \sum_{i=1}^s \sum_{j=i+1}^s \nu_{ij} \langle
\scrF_{n,i}, v_{n,j} \rangle,
\end{equation}
where we introduced the notation $v_{n,0} = u_n$ and $v_{n,s+1} = u_{n+1}$.
The coefficients $\{\delta_i\}_{i=1}^{s+1} \subset \mathbb{R}$ and
$\{\nu_{ij}\}_{i=1,j=i}^{s,s}\subset \mathbb{R}$ depend only on the tableau,
and are expected to satisfy the following constraints:
\begin{enumerate}[$\bullet$]
  \item $\delta_i \geq 0$ for all $i \in \{1,\ldots,s+1\}$.

  \item $\nu_{ii}>0$ for all $i=\{1, \ldots, s\}$.

  \item The coefficients $\{\nu_{ij}\}_{i=1,j=i}^{s,s}$ must satisfy the
constraint
  \begin{equation}\label{QuadCond}
    \sum_{i = 1}^s \nu_{ii} + \sum_{i=1}^s \sum_{j = i+1}^s \nu_{ij} = 1.
  \end{equation}
\end{enumerate}
\end{definition}

Notice that \eqref{AbstractDiscLaw} differs from \eqref{DiscEnergyBalance} in at
least two salient terms. First, the presence of the terms $\sum_{i=1}^{s+1}
\delta_i |v_{n,i} - v_{n,i-1}|^2$ may seem out of place. However, these terms
represent purely numerical artificial damping and are usually quite desirable
when dealing with dissipative/parabolic PDEs\footnote{For instance, in some very
specific contexts such as projection methods for incompressible Navier-Stokes
equations, artificial damping terms are critical to guarantee numerical
stability of the scheme, see for instance \cite{GS2009,GS2011} and references
therein.}. However, artificial damping is not desirable when considering the
discretization of Hamiltonian, or related, PDEs (e.g., PDEs that preserve
quadratic invariants). For this reason, we allow the coefficients $\delta_i$ to
be all zero if that is suitable for the problem at hand. Second, while the terms
$\langle \scrF_{n,i}, v_{n,i} \rangle$ are expected, the off-diagonal terms,
i.e., $\langle \scrF_{n,i}, v_{n,j} \rangle$ with $i \not = j$, may be
problematic and, thus, it is possible that \eqref{AbstractDiscLaw} will not
lead to suitable a priori estimates for discrete solutions. For this reason, we
introduce a stronger notion, which we call ``dissipative discrete
energy-balance''.

\begin{definition}[dissipative discrete
energy-balance]\label{def:DiscrDissEnergyBalance}
We say that a DIRK scheme with tableau \eqref{eq:TheDIRK} satisfies a
dissipative discrete energy-balance if there are strictly positive
$\{\nu_i\}_{i=1}^s$, and 
\[
  \mathcal{Q}: [\polH]^{s+1} \to \mathbb{R}, \qquad
\mathcal{Q}(w_1,\ldots,w_{s+1}) = \frac12 \sum_{i=1}^{s+1} \sum_{j=1}^{s+1}
q_{ij} (w_i,w_j), \qquad q_{ij}=q_{ji} \in \mathbb{R},
\]
a nonnegative definite quadratic form on $[\polH]^{s+1}$ such that, for any
$\frakN \in \polN$, every partition $\calP_\frakN$, and all $n \in \{0, \ldots,
\frakN-1\}$, we have
\begin{equation}
\label{AbstractDiscBal}
  \frac{1}{2}|u_{n+1}|^2 + \mathcal{Q}(u_n, v_{n,1}, \ldots, v_{n,s} ) - \dt_n
\sum_{i = 1}^s \nu_{i} \langle \scrF_{n,i},v_{n,i} \rangle = \frac{1}{2}
|u_{n}|^2 .
\end{equation}
\end{definition}

Expression \eqref{AbstractDiscBal} states that, beyond the ``energy''
introduced into the system by the non-autonomous part $f$, the scheme will not
produce any spurious surplus of energy. Moreover, the fact that $\mathcal{Q}$ is
nonnegative definite implies that the scheme may even dissipate some energy. The
following result makes this intuition rigorous.

\begin{proposition}[discrete energy
dissipation]\label{prop:DiscEnergyDissipationMildAssume}
Assume that, in \eqref{eq:TheODE}, the mapping $\calA$ satisfies
\eqref{positivity}. If the solution to \eqref{eq:TheODE} is approximated with a
DIRK scheme that satisfies the dissipative discrete energy-balance of
Definition~\ref{def:DiscrDissEnergyBalance}, then for all $\frakN \in \polN$,
any partition $\calP_\frakN$, and all $n \in \{0, \ldots, \frakN-1\}$, we have
\begin{align}\label{IncEstimate}
 \frac12 |u_{n+1} |^2 \leq \frac12 |u_n|^2 + \dt_n \sum_{i=1}^s \nu_i \langle
f_{n,i}, v_{n,i} \rangle
\end{align}
\end{proposition}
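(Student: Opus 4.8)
The plan is to start directly from the dissipative discrete energy-balance identity \eqref{AbstractDiscBal}, which the hypotheses guarantee, and to convert it into the desired inequality \eqref{IncEstimate} by discarding a pair of nonnegative terms. First I would use the splitting of the slope function to write, for each stage $i \in \{1,\ldots,s\}$,
\[
  \langle \scrF_{n,i}, v_{n,i} \rangle = \langle f_{n,i}, v_{n,i} \rangle - \langle \calA(v_{n,i}), v_{n,i} \rangle,
\]
and substitute this into \eqref{AbstractDiscBal}. Rearranging so as to isolate $\tfrac12 |u_{n+1}|^2$ on the left would then yield
\[
  \frac12 |u_{n+1}|^2 = \frac12 |u_n|^2 + \dt_n \sum_{i=1}^s \nu_i \langle f_{n,i}, v_{n,i} \rangle - \mathcal{Q}(u_n, v_{n,1}, \ldots, v_{n,s}) - \dt_n \sum_{i=1}^s \nu_i \langle \calA(v_{n,i}), v_{n,i} \rangle.
\]

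Next I would observe that the last two terms on the right are both nonnegative and may therefore be dropped without affecting the inequality's direction. The term $\mathcal{Q}(u_n, v_{n,1}, \ldots, v_{n,s})$ is nonnegative because, by Definition~\ref{def:DiscrDissEnergyBalance}, $\mathcal{Q}$ is a nonnegative definite quadratic form on $[\polH]^{s+1}$. For the final sum, since $\dt_n > 0$ and each $\nu_i > 0$ by hypothesis, it suffices to note that each pairing satisfies $\langle \calA(v_{n,i}), v_{n,i} \rangle \geq 0$; this is precisely the nonnegativity assumption \eqref{positivity} applied to the stage $v_{n,i}$. Discarding both nonnegative contributions converts the equality into the claimed bound \eqref{IncEstimate}.

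There is no substantial obstacle in this argument: once the slope function is expanded and the identity is rearranged, the result follows from the sign hypotheses alone. The only points demanding minor care are the bookkeeping of signs when moving the $\calA$-contribution across the equality, and the verification that each stage $v_{n,i}$ indeed belongs to $\polV$ so that \eqref{positivity} is applicable—this last fact is already recorded in the discussion following \eqref{eq:DIRKappliedtoPDE}.
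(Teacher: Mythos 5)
Your proposal is correct and follows essentially the same route as the paper's own proof: substitute the splitting $\scrF_{n,i} = f_{n,i} - \calA(v_{n,i})$ into \eqref{AbstractDiscBal}, then drop the nonnegative quadratic form $\mathcal{Q}$ and the nonnegative terms $\dt_n \nu_i \langle \calA(v_{n,i}), v_{n,i}\rangle$, whose signs come from the nonnegative definiteness of $\mathcal{Q}$, the positivity of the $\nu_i$, and assumption \eqref{positivity}. The only difference is that you spell out the rearrangement explicitly, which the paper leaves implicit.
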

\begin{proof}
We start by recalling that $\scrF_{n,i} = f_{n,i} - \calA(v_{n,i})$.
Since $\mathcal Q$ and the diagonal terms $\langle \mathcal{A}(v_{n,i}),
v_{n,i}\rangle$ are nonnegative due to the assumption \eqref{positivity}, they
can be dropped from the left hand side of \eqref{AbstractDiscBal}.
\end{proof}

\subsection{Literature review}\label{sub:Literature}

At this point, it is worth making some comments about RK methods, discrete
energy laws and the preexisting literature.

There is a significant body of numerical ODE literature attempting to bridge
the gap between algebraic notions of stability and nonlinear notions of
stability; see for instance: A-stability \cite{Dahlquist1963}, L-Stability
\cite{Ehle1969,Ehle1973}, B-Stability \cite{Butcher1975}, AN-Stability
\cite{Burrage1979}, BN-stability \cite{Burrage1979}, BS-stability and
BSI-stability \cite{Frank1985I,Frank1985II} and G-stability \cite{Dahlquist1978}
among many others (see also \cite{Hairer1993I,Butcher2008}). Similarly, there is
a specific body of scientific literature relating algebraic notions of stability
and discrete energy laws
\cite{Burrage1979,Burrage1980,Humph1994,Ranocha2020,Wu2022}. However, we find
that the classical techniques used in the numerical ODE literature are largely
incompatible with our current goals, and the notions of stability that we are
trying to advance; see
Definitions~\ref{DiscBalanceDef}--\ref{def:DiscrDissEnergyBalance} and Definition~\ref{def:discBochnerStable} below. We explain our reasoning in more detail.

\begin{enumerate}[$\bullet$]

  \item \emph{Functional setting.} A common assumption in the numerical ODE
literature is that $\mathcal{A}:\mathbb{H} \rightarrow \mathbb{H}$ boundedly;
see, for instance \cite{Ranocha2018,Ranocha2020,Higueras2005,Wu2022}. This is a
rather stringent assumption that, in general, is not suitable for PDEs. For
instance, it does not allow us to capture the linear heat equation,
incompressible Navier-Stokes equation, nor large families of
advection-reaction-diffusion systems. For this reason we, instead, focus on the
Gelfand-triple functional framework and assume that $\mathcal{A}:\mathbb{V}
\rightarrow \mathbb{V}^*$.

  \item \emph{Choice of norms.} The numerical ODE literature focuses on the
development of $L^\infty(0,\tf;\polH)$ estimates, i.e., $|u_n| \leq |u_0|$ for
all $n\geq0$; see, for instance \cite{Burrage1980,Humph1994}. For ODEs in
$\mathbb{R}^N$ and some limited cases of linear hyperbolic PDEs, this may
suffice. However, for many PDE problems, such as parabolic-like problems, this
type of estimate may be insufficient. Without a priori bounds on spatial
derivatives in Bochner-type norms, it is not possible to assert stability of
such schemes. To assert convergence, usually, one additionally requires an a
priori estimate on the time derivative, again in a Bochner-type norm. In
other words: discrete energy balances of the form \eqref{AbstractDiscBal} are
suitable for the analysis of parabolic-like problems, while estimates of the
form \eqref{IncEstimate}, in general cannot yield enough compactness.

We note that estimates in $L^\infty(0,\tf;\polH) \cap L^p(0,\tf;\polV)$, for
some $p>1$, are standard in the PDE and numerical-PDE literature
\cite{Lions1969, MarionTemam1998, Thomee2006, Walk2010, Gllou2012, Feng2006,
Layton2008, Shen2010, Roubicek2013, Elliott2021}. On the other hand, to our
knowledge, the numerical ODE literature \cite{Hairer1993I, HumphBook,
Butcher2008, Hairer1996II} has not focused on a priori bounds in Bochner-type
norms, space-time compactness, or convergence without regularity assumptions for
problems of growing dimensionality (i.e. discretization of evolutionary PDEs).

 \item \emph{Finite dimensionality.} We want to develop stability results that
are valid for finite dimensional problems as well as their infinite dimensional
limits. This is a somewhat delicate issue when dealing with operators of the
form $\mathcal{A}:\mathbb{V} \rightarrow \mathbb{V}^*$. Let us explain what we
mean here. As we detailed above, see \eqref{eq:AGenericStage} and
Remark~\ref{rem:interpretationWeak}, a generic stage must be interpreted as:
find $v \in \polV$ such that
  \[
    (v,w) + a \dt_n \langle \calA(v), w\rangle =\langle F,w\rangle, \quad
\forall w\in \polV.
  \]
  At this point one may be tempted to set $w = \calA(v)$. However, that is not
necessarily well-defined unless additional assumptions are made\footnote{For
instance, if $\mathcal{A} = - \Delta$, the product
$(\mathcal{A}(v),\mathcal{A}(v) )_{L^2(\Omega)}$ is meaningful only if we assume
$H^2(\Omega)$-regularity of $v$.}. Similarly, higher order compositions of the
operator, i.e., $\mathcal{A}(\mathcal{A}(v))$, are not meaningful unless
$\mathcal{A}$ maps a Banach space to itself. We note that energy identities and
a priori bounds in norm using such constructions are common in the numerical ODE
literature; see for instance \cite{Ranocha2018,Ranocha2020,Higueras2005, Wu2022}
and the review paper \cite[p. 1464--1465]{Humph1994}.
\end{enumerate}

In light of the shortcomings mentioned above, in this manuscript we develop
stability results that:

\begin{enumerate}[$\bullet$]
  \item Target specifically DIRK schemes in the framework of a Gelfand triple
and unbounded operators. We limit ourselves to the case of two- and three-stage
schemes.

  \item Energy identities and a priori bounds will solely rely on the inner
product in $\mathbb{H}$, the duality pairing $\langle \mathcal{A}(u),v\rangle$,
and additive telescopic cancellation arguments. We do not use or invoke higher
order compositions of the operator $\mathcal{A}$, higher order products such as
$\langle\mathcal{A}(u),\mathcal{A}(v)\rangle$, nor similar ``multiplicative''
constructions.

  \item While this may be necessary to show existence of solutions to
\eqref{eq:TheODE}, we make no assumption of contractivity/monotonicity of our
operator $\mathcal{A}$ to obtain stability. Our primary notion of nonlinear
stability revolves around ``dissipative discrete energy-balances'', see
\eqref{AbstractDiscBal}, which is, strictly speaking, a property of the scheme,
not a property of the operator. This enables the proof of a priori bounds in
Bochner-type norms for $u$ and its time derivative $\tfrac{\diff u}{\diff t}$
for some families of operators; see Proposition~\ref{prop:DiscBochnerIsImplied}.

  \item Our a priori Bochner-type norm estimates make a clear cut distinction
between ``artificial/numerical damping'' and ``physical or PDE dissipation''
terms. A precise identification of artificial damping terms plays a pivotal role
in order to establish stability of the scheme. On the other hand, physical
dissipation is fundamental to establish dual norm estimates on the time
derivative of the discrete solution.
\end{enumerate}

As a final comment we note that the mathematical theory about Galerkin-in-time
and/or full-tableau RK methods developed by the numerical PDE community; see
\cite{Lubich1995, Makri2006, Akri2011, Walk2014, Hochbruck2018, Ern2019} and
references therein, rarely ever applies (or even mentions) DIRK schemes. We do
however, highlight the preexistence of quite relevant material with specific
focus on DIRK schemes that shares a few common points of intersection with the
material presented in this manuscript. In particular \cite{Shin2020} addresses
the issue of gradient flow stability for DIRK schemes, while
\cite{Osterman2010} addresses convergence without regularity assumptions using
semi-group methods. However, the material presented in the current paper
differs quite significantly in relationship to the mathematical tools used and
the degree of generality.

\subsection{Some popular DIRK schemes}\label{sub:ExamplesOfDIRK}

Let us present some popular two- and three-stage DIRK schemes, and briefly
mention some of their known properties. These will be our specific examples
under consideration used to illustrate the developed theory.

\subsubsection{Two-stage schemes}\label{subsub:TwoStageList}
We will consider the following two-stage schemes.
\begin{enumerate}[$\bullet$]
  \item Alexander's DIRK22 scheme:
  \begin{align}\label{AlexDIRK22tableu}
    \begin{array}
    {c|cc}
    \gamma & \gamma                 \\
    1      & 1-\gamma & \gamma \\
    \hline
           & 1-\gamma & \gamma
    \end{array}
  \qquad \qquad \gamma = 1 - \tfrac{\sqrt{2}}{2}.
  \end{align}
  Tableau \eqref{AlexDIRK22tableu} seems to appear for the first time in
\cite{Alexander1977}.

  \item Butcher-Burrage DIRK22 scheme:
  \begin{align}\label{ButcherBurrageTableau}
  \begin{array}
  {c|cc}
  \gamma     & \gamma                 \\
  1 - \gamma & 1-2\gamma     & \gamma \\
  \hline
             & \tfrac{1}{2} & \tfrac{1}{2}
  \end{array}
  \qquad \qquad \gamma = 1 \pm \tfrac{\sqrt{2}}{2}.
  \end{align}
  To the best of our knowledge this tableau appears for the first time in
\cite[p. 51]{Burrage1979}.

  \item  Kraaijevanger-Spijker DIRK22 scheme
  \begin{align}\label{KraaijDIRK22}
  \begin{array}
  {c|cc}
  \tfrac{1}{2} & \tfrac{1}{2}                 \\
  \tfrac{3}{2} & -\tfrac{1}{2} & 2 \\
  \hline
              & - \tfrac{1}{2} & \tfrac{3}{2}
  \end{array}
  \end{align}
  see \cite[p. 77]{Kraaije1989}.

  \item  Crouzeix's DIRK23 scheme:
  \begin{align}\label{CrouzeixTableau}
  \begin{array}
  {c|cc}
  \tfrac{1}{2}+ \gamma & \tfrac{1}{2}+ \gamma &                     \\
  \tfrac{1}{2}- \gamma & -2 \gamma            & \tfrac{1}{2}+ \gamma \\
  \hline
         & \tfrac{1}{2} & \tfrac{1}{2}
  \end{array}
  \qquad \qquad \gamma = \tfrac{\sqrt{3}}{6}.
  \end{align}
  This tableau can be found in \cite{Crou1979}. This scheme is third order
accurate.
\end{enumerate}


\subsubsection{Three-stage schemes}\label{subsub:ThreeStageList}
Regarding three-stage methods we will consider:
\begin{enumerate}[$\bullet$]

  \item Alexander's DIRK33 scheme
  \begin{align}\label{DIRK33Lstable}
  \begin{array}
  {c|ccc}
  \gamma                & \gamma                &             & \\
  \tfrac{1 + \gamma}{2} & \tfrac{1 - \gamma}{2} & \gamma      & \\
  1                     & b_1(\gamma)           & b_2(\gamma) & \gamma \\
  \hline
                        & b_1(\gamma)           & b_2(\gamma) & \gamma
  \end{array}
  \end{align}
  where $\gamma$ is the root of $6\gamma^3 - 18 \gamma^2 + 9\gamma -1 = 0$ in
the interval $(\tfrac16,\tfrac12)$. More precisely we have that
  \begin{align*}
    \gamma &= 1 - \sqrt{2} \sin \big(\tfrac{2 \arctan
\frac{\sqrt{2}}{2}}{3}\big) \approx 0.4358665 \\
    b_1(\gamma) &= -\tfrac{3}{2} \gamma^2 + 4 \gamma - \tfrac{1}{4} \approx
1.2084966 \\
    b_2(\gamma) &= \tfrac{3}{2} \gamma^2 - 5 \gamma + \tfrac{5}{4} \approx
-0.6443631.
  \end{align*}
  Tableau \eqref{DIRK33Lstable} seems appear for the first time in
\cite[p. 1012]{Alexander1977}.

  \item  N{\o}rsett DIRK34 order method:
  \begin{align}\label{CrouRavTableau}
  \begin{array}
  {c|ccc}
  \gamma       & \gamma        & & \\
  \tfrac{1}{2} & \tfrac{1}{2} - \gamma & \gamma  & \\
  1 - \gamma   & 2 \gamma & 1 - 4 \gamma & \gamma   \\
  \hline
               & \delta
               & 1 - 2 \delta
               & \delta
  \end{array}
  \qquad \qquad \delta = \tfrac{1}{6(1-2\gamma)^2},
  \end{align}
  and $\gamma$ is one of the roots of the equation
  $\gamma^3 - \tfrac{3}{2} \gamma^2 + \frac{1}{2} \gamma - \tfrac{1}{24} = 0$.
  More precisely these roots are
  \begin{align}
  \begin{split}\label{GammasCroRav}
    \gamma_1 &= \tfrac{\sqrt{3}}{3} \cos(\tfrac{\pi}{18}) + \tfrac{1}{2} \approx
1.068579021, \\
    \gamma_2 &= \tfrac{1}{2} - \tfrac{\sqrt{3}}{3} \sin(\tfrac{2\pi}{9}) \approx
0.1288864005,  \\
    \gamma_3 &= \tfrac{1}{2} - \tfrac{\sqrt{3}}{3} \sin(\tfrac{\pi}{9}) \approx
0.3025345781.
  \end{split}
  \end{align}
  The case of $\gamma_1$ appears in the literature as the Crouzeix-Raviart
scheme \cite{Crou1979}.
\end{enumerate}

\subsection{An alternative representation of DIRK
schemes}\label{sub:Extrapolation}

Let us finish the general discussion about DIRK schemes with an alternative
representation of the solution at the next time step as an extrapolation. This
will be useful when deriving discrete energy-balance laws.

\begin{proposition}[extrapolation]\label{prop:extrapolateDIRK}
Assume that the RK scheme with tableau \eqref{eq:TheDIRK} is such that
$\bv{A}$ is invertible. Then $u_{n+1}$, the solution at the next discrete time,
has the following representation
\[
  u_{n+1} = \left( 1- \sum_{i=1}^s \lambda_i \right) u_n + \sum_{i=1}^s
\lambda_i v_{n,i},
\]
where
\[
  \lambda_i = \bv{b}^\intercal \bv{A}^{-1} \bv{e}_i,
\]
and $\{\bv{e}_i\}_{i=1}^s$ is the canonical basis of $\mathbb{R}^s$.
\end{proposition}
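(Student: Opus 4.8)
The plan is to recast the stage and update equations of \eqref{eq:DIRKappliedtoPDE} in matrix--vector form and then eliminate the slopes $\scrF_{n,i}$ using the invertibility of $\bv{A}$. First I would collect the $s$ stages and the $s$ slopes into the column vectors $\bv{v} = [v_{n,1},\ldots,v_{n,s}]^\intercal$ and $\bv{\scrF} = [\scrF_{n,1},\ldots,\scrF_{n,s}]^\intercal$ (with entries in $\polV$ and $\polV^*$ respectively), and set $\bv{1} = [1,\ldots,1]^\intercal \in \mathbb{R}^s$. For a DIRK scheme $\bv{A}$ is lower triangular, so $a_{ij}=0$ for $j>i$ and the stage equation in \eqref{eq:DIRKappliedtoPDE} may be written with the full sum $\sum_{j=1}^s a_{ij}\scrF_{n,j}$; the same expression is of course already in force for a general RK scheme. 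The whole stage system then becomes the single relation $\bv{v} = u_n \bv{1} + \dt_n \bv{A}\bv{\scrF}$, understood componentwise as finite scalar linear combinations of elements of $\polV^*$, while the update equation reads $u_{n+1} = u_n + \dt_n \bv{b}^\intercal \bv{\scrF}$.

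The key step is to solve the stage relation for the slopes. Since $\bv{A}$ is invertible I can write $\dt_n \bv{\scrF} = \bv{A}^{-1}\big(\bv{v} - u_n\bv{1}\big)$; the only thing to note here is that this manipulation is legitimate for $\polV^*$-valued vectors, because applying $\bv{A}^{-1}$ merely forms fixed scalar linear combinations of the components. Substituting into the update equation gives
\[
  u_{n+1} = u_n + \bv{b}^\intercal \bv{A}^{-1}\big(\bv{v} - u_n \bv{1}\big) = u_n + \sum_{i=1}^s \big(\bv{b}^\intercal \bv{A}^{-1}\bv{e}_i\big)\, v_{n,i} - u_n\, \bv{b}^\intercal \bv{A}^{-1}\bv{1}.
\]

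Finally I would read off the coefficients. With $\lambda_i = \bv{b}^\intercal \bv{A}^{-1}\bv{e}_i$ and $\bv{1} = \sum_{i=1}^s \bv{e}_i$ one has $\bv{b}^\intercal \bv{A}^{-1}\bv{1} = \sum_{i=1}^s \lambda_i$, so the displayed identity collapses to $u_{n+1} = \big(1-\sum_{i=1}^s \lambda_i\big)u_n + \sum_{i=1}^s \lambda_i v_{n,i}$, which is exactly the claimed representation. I do not anticipate any serious obstacle: the argument is pure finite-dimensional linear algebra, and the only point requiring a word of care is that the coefficient vectors act on Banach-space-valued quantities, so every operation must be read as a scalar linear combination in $\polV^*$ (equivalently, tested against an arbitrary $w\in\polV$ through the pairing, as in Remark~\ref{rem:interpretationWeak}). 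Since the resulting expression involves only $u_n$ and the stages $v_{n,i}$, all of which lie in $\polV\subset\polH$, the identity in fact holds in $\polH$ as well.
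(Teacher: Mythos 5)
Your proposal is correct and follows essentially the same argument as the paper: write the stages and slopes in vector form, invert $\bv{A}$ to express $\dt_n \boldsymbol{\scrF}_n = \bv{A}^{-1}(\bv{v}_n - u_n\boldsymbol{1})$, and substitute into the update equation to read off $\lambda_i = \bv{b}^\intercal \bv{A}^{-1}\bv{e}_i$. Your added remark that the matrix operations are legitimate for $\polV^*$-valued vectors (being finite scalar linear combinations) is a point the paper leaves implicit, but it does not change the substance.
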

\begin{proof}
For $n \geq 0$ we introduce the notation
\[
  \bv{v}_n = \left[ v_{n,1}, \ldots, v_{n,s} \right]^\intercal \in \mathbb{R}^s,
\qquad
  \boldsymbol{\scrF}_n = \left[ \scrF_{n,1}, \ldots, \scrF_{n,s}
\right]^\intercal \in \mathbb{R}^s, \qquad
  \boldsymbol{1} = \left[1, \ldots, 1\right]^\intercal \in \mathbb{R}^s,
\]
Therefore, the stages in \eqref{eq:TheDIRK} can be rewritten as
\[
  \bv{v}_n = u_n \boldsymbol{1} + \dt_n \bv{A} \boldsymbol{\scrF}_n \quad \iff
\quad \boldsymbol{\scrF}_n = \frac1{\dt_n} \bv{A}^{-1} \left( \bv{v}_n - u_n
\boldsymbol{1} \right),
\]
where we used that the matrix $\bv{A}$ is invertible.

We can then write the solution at the next discrete time as
\[
  u_{n+1} = u_n + \dt_n \bv{b}^\intercal \boldsymbol{\scrF}_n = \left( 1 -
\bv{b}^\intercal \bv{A}^{-1} \boldsymbol{1} \right) u_n + \bv{b}^\intercal
\bv{A}^{-1} \bv{v}_n.
\]
Setting
\[
  \bv{b}^\intercal \bv{A}^{-1} = [\lambda_1, \ldots, \lambda_s] \in \mathbb{R}_s
\]
the result follows.
\end{proof}


\section{Discrete energy-balance for two-stage schemes}\label{sec:TwoStage}

Here we study discrete energy-balance laws for two-stage schemes. Our main
contribution here is to find a class of schemes, which we will call
\emph{remarkably stable}, see Definition~\ref{RemarkableDefTwoStage}, which
automatically satisfy a dissipative discrete energy-balance, see
Definition~\ref{def:DiscrDissEnergyBalance}.

\subsection{General discrete energy-balance
laws}\label{sub:AbstractLawsTwoStage}

We begin by specializing Proposition~\ref{prop:extrapolateDIRK} for the case of
two-stage schemes.

\begin{corollary}[two-stage extrapolation]
\label{cor:TwostageExtrapolate}
Let the RK scheme \eqref{eq:TheDIRK} be such that $s=2$, $\bv{A}$ is lower
triangular, and with positive diagonal entries. Then, for all $n\geq 0$,
\begin{equation}
\label{FinalSolLinearDirk22}
  u_{n+1} = \left( 1- \lambda_1 - \lambda_2 \right) u_n + \lambda_1 v_{n,1} +
\lambda_2 v_{n,2},
\end{equation}
with
\begin{equation}
\label{Lambda12general}
  \lambda_1 = \frac{b_1}{a_{11}} - \frac{b_2 a_{21}}{a_{22} a_{11}} ,
  \qquad \qquad
  \lambda_2 = \frac{b_2}{a_{22}} .
\end{equation}
\end{corollary}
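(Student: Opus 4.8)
The plan is to specialize Proposition~\ref{prop:extrapolateDIRK} to the two-stage case by explicitly computing $\bv{b}^\intercal \bv{A}^{-1}$ for a $2\times 2$ lower-triangular matrix with nonzero diagonal entries. Since the general extrapolation formula already gives $u_{n+1} = (1 - \sum_i \lambda_i) u_n + \sum_i \lambda_i v_{n,i}$ with $\lambda_i = \bv{b}^\intercal \bv{A}^{-1} \bv{e}_i$, the entire content of the corollary reduces to verifying that $\bv{A}$ is invertible under the stated hypotheses and then evaluating the row vector $\bv{b}^\intercal \bv{A}^{-1}$.

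First I would observe that for
\[
  \bv{A} = \begin{pmatrix} a_{11} & 0 \\ a_{21} & a_{22} \end{pmatrix},
\]
the positivity of the diagonal entries $a_{11}, a_{22} > 0$ guarantees $\det \bv{A} = a_{11} a_{22} \neq 0$, so $\bv{A}$ is invertible and Proposition~\ref{prop:extrapolateDIRK} applies. Next I would write down the inverse explicitly,
\[
  \bv{A}^{-1} = \frac{1}{a_{11} a_{22}} \begin{pmatrix} a_{22} & 0 \\ -a_{21} & a_{11} \end{pmatrix}.
\]
Then I would compute the row vector
\[
  [\lambda_1, \lambda_2] = \bv{b}^\intercal \bv{A}^{-1} = [b_1, b_2] \, \frac{1}{a_{11} a_{22}} \begin{pmatrix} a_{22} & 0 \\ -a_{21} & a_{11} \end{pmatrix},
\]
and read off the two entries by matrix multiplication: the first component is $(b_1 a_{22} - b_2 a_{21})/(a_{11} a_{22}) = b_1/a_{11} - b_2 a_{21}/(a_{22} a_{11})$ and the second is $b_2 a_{11}/(a_{11} a_{22}) = b_2/a_{22}$, matching \eqref{Lambda12general} exactly. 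Substituting these $\lambda_1, \lambda_2$ into the general extrapolation formula yields \eqref{FinalSolLinearDirk22}.

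This is an entirely routine direct computation, so there is no genuine obstacle; the only point requiring care is confirming the invertibility hypothesis of Proposition~\ref{prop:extrapolateDIRK} is met, which follows immediately from the assumed positivity of the diagonal entries. The remaining steps are pure $2\times 2$ linear algebra, and the algebraic form of the answer in \eqref{Lambda12general} is written so as to isolate the diagonal contributions $b_i/a_{ii}$ from the subdiagonal correction, which I would simply verify by inspection of the product above.
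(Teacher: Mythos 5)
Your proposal is correct and follows exactly the same route as the paper's own proof: both verify that positivity of the diagonal entries makes $\bv{A}$ invertible (implicitly in the paper), compute $\bv{A}^{-1}$ for the $2\times 2$ lower-triangular matrix explicitly, and read off $[\lambda_1,\lambda_2]=\bv{b}^\intercal\bv{A}^{-1}$ to match \eqref{Lambda12general}. No gaps; the computation and conclusion coincide with the paper's.
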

\begin{proof}
A direct computation shows that
\[
  \bv{A}^{-1} = \begin{bmatrix}
                  \frac1{a_{11}} & 0 \\ - \frac{a_{21}}{a_{11}a_{22}} &
\frac1{a_{22}}
                \end{bmatrix},
  \quad  \bv{b}^\intercal \bv{A}^{-1} = [b_1 \ b_2] \begin{bmatrix}
                  \frac1{a_{11}} & 0 \\ - \frac{a_{21}}{a_{11}a_{22}} &
\frac1{a_{22}}
                \end{bmatrix} = \begin{bmatrix} \frac{b_1}{a_{11}} - \frac{b_2
a_{21}}{a_{22} a_{11}} \\ \frac{b_2}{a_{22}} \end{bmatrix},
\]
as we intended to show.
\end{proof}

\begin{lemma}[some useful identities]
\label{lem:InnProds}
Let $s=2$, $N \in \polN$, $\calP_\frakN$ be any partition of $[0,\tf]$, and
$n \in \{0, \ldots, \frakN-1\}$. If in \eqref{eq:TheDIRK} the matrix $\bv{A}$ is
lower triangular and with positive diagonal entries, then we have
\begin{align}
\label{eq:id1}
  | v_{n,1} |^2 + | v_{n,1} -u_n |^2 - | u_n |^2 &= 2a_{11}\tau_n \langle
\scrF_{n,1}, v_{n,1} \rangle, \\
\label{eq:id2}
  | v_{n,2} |^2 + | v_{n,2} - v_{n,1} |^2 - | v_{n,1} |^2 &= 2\tau_n \left[
(a_{21}-a_{11}) \langle \scrF_{n,1}, v_{n,2} \rangle + a_{22} \langle
\scrF_{n,2}, v_{n,2} \rangle \right], \\
\label{eq:id3}
  | v_{n,1} |^2 - ( u_n, v_{n,1} ) &= \tau_n a_{11} \langle \scrF_{n,1}, v_{n,1}
\rangle, \\
\label{eq:id4}
  | v_{n,2} |^2 - (u_n,v_{n,2}) &= \tau_n \left[ a_{21} \langle \scrF_{n,1},
v_{n,2} \rangle + a_{22} \langle \scrF_{n,2}, v_{n,2} \rangle \right], \\
\label{eq:id5}
  | v_{n,2} |^2 - (v_{n,1},v_{n,2}) &= \tau_n \left[ (a_{21} - a_{11})\langle
\scrF_{n,1}, v_{n,2} \rangle + a_{22}\langle \scrF_{n,2},v_{n,2} \rangle
\right],
\end{align}
where $\{u_n,v_{n,1},v_{n,2},u_{n+1}\}$ come from \eqref{eq:DIRKappliedtoPDE}.
\end{lemma}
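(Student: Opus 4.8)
The plan is to derive all five identities directly from the $s=2$ stage equations in \eqref{eq:DIRKappliedtoPDE}, namely $v_{n,1} = u_n + \tau_n a_{11}\scrF_{n,1}$ and $v_{n,2} = u_n + \tau_n(a_{21}\scrF_{n,1} + a_{22}\scrF_{n,2})$ (read as identities in $\polV^*$), combined with the polarization identity \eqref{FundIdentity} and the Gelfand identification \eqref{eq:NeedForGelfand}. The unifying idea is that, for any two of the quantities $u_n, v_{n,1}, v_{n,2} \in \polV$, their difference lies in $\polH$, so its duality pairing against any element of $\polV$ coincides with the $\polH$-inner product.

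First I would establish the three linear identities \eqref{eq:id3}, \eqref{eq:id4}, \eqref{eq:id5}. For \eqref{eq:id3}, rewrite the first stage equation as $v_{n,1} - u_n = \tau_n a_{11}\scrF_{n,1}$ and pair it against $v_{n,1} \in \polV$: since $v_{n,1} - u_n \in \polH$, the left pairing equals $(v_{n,1}-u_n, v_{n,1}) = |v_{n,1}|^2 - (u_n, v_{n,1})$ by \eqref{eq:NeedForGelfand}, while the right pairing is $\tau_n a_{11}\langle\scrF_{n,1}, v_{n,1}\rangle$. Identity \eqref{eq:id4} follows the same way by pairing $v_{n,2} - u_n = \tau_n(a_{21}\scrF_{n,1}+a_{22}\scrF_{n,2})$ against $v_{n,2}$. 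For \eqref{eq:id5}, I would first subtract the two stage equations to obtain $v_{n,2} - v_{n,1} = \tau_n[(a_{21}-a_{11})\scrF_{n,1} + a_{22}\scrF_{n,2}]$, and then pair against $v_{n,2}$; this is what produces the coefficient $a_{21}-a_{11}$ on the first slope.

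The two quadratic identities \eqref{eq:id1} and \eqref{eq:id2} then follow with no further input. Using \eqref{FundIdentity} in its Hilbert-space form $(a, a-b) = \tfrac12|a|^2 - \tfrac12|b|^2 + \tfrac12|a-b|^2$ with $a = v_{n,1}$ and $b = u_n$ gives $|v_{n,1}|^2 + |v_{n,1}-u_n|^2 - |u_n|^2 = 2(v_{n,1}, v_{n,1}-u_n)$, whose right-hand side is exactly twice the pairing already computed for \eqref{eq:id3}; this yields \eqref{eq:id1}. Similarly, polarization with $a = v_{n,2}$ and $b = v_{n,1}$ reduces \eqref{eq:id2} to twice \eqref{eq:id5}.

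I do not expect a substantive obstacle: each identity is a single inner-product computation. The only points demanding care are bookkeeping ones — forming the stage differences correctly so that $a_{21}-a_{11}$ rather than $a_{21}$ multiplies $\scrF_{n,1}$ in \eqref{eq:id2} and \eqref{eq:id5}, and justifying the passage from the duality pairing to the $\polH$-inner product, which is legitimate precisely because each stage lies in $\polV \subset \polH$ and the relevant differences lie in $\polH$.
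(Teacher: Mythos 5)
Your proposal is correct and follows essentially the same route as the paper: test each stage equation (or the difference of the two stages) against $v_{n,1}$ or $v_{n,2}$, use the Gelfand identification \eqref{eq:NeedForGelfand} to turn duality pairings into $\polH$-inner products, and invoke the polarization identity \eqref{FundIdentity} for the quadratic identities \eqref{eq:id1} and \eqref{eq:id2}. Your only organizational difference—deriving the linear identities first and then observing that \eqref{eq:id1} and \eqref{eq:id2} are twice \eqref{eq:id3} and \eqref{eq:id5} combined with polarization—is a tidier bookkeeping of the same computation.
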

\begin{proof}
These identities follow from taking duality pairings of each of the stages
with suitable functions. Identity \eqref{eq:id1} comes from testing the equation
for the first stage with $2v_{n,1}$ and using the well--known polarization
identity \eqref{FundIdentity}. Similarly, identity \eqref{eq:id2} comes from
testing the second stage with $2v_{n,2}$. Identity \eqref{eq:id3} comes from
testing the first stage with $v_{n,1}$. Similarly, we get \eqref{eq:id4} by
testing the second stage with $v_{n,2}$. Finally, we combine the two stages and
test the result with $v_{n,2}$ to obtain \eqref{eq:id5}.
\end{proof}

We are now in position to prove a precursor to \eqref{AbstractDiscLaw}. Notice
that here we are not assuming any order conditions on the entries of the Butcher
table.

\begin{theorem}[discrete energy identity I]
\label{thm:protoenergy}
Let $s=2$, $N \in \polN$, $\calP_\frakN$ be a partition of $[0,\tf]$, and
$n \in \{0, \ldots, \frakN-1\}$. If $a_{ii}>0$ for all $i = \{1,\ldots,s\}$,
then the solution to \eqref{eq:DIRKappliedtoPDE} satisfies
\begin{multline}\label{eq:protoenergy}
  \frac12 | u_{n+1} |^2 + \delta_1 | v_{n,1} - u_n |^2 + \delta_2 | v_{n,2} -
v_{n,1} |^2
  -\tau_n \left[ \nu_{11} \langle \scrF_{n,1}, v_{n,1} \rangle + \nu_{22}
\langle \scrF_{n,2},v_{n,2} \rangle \right] = \\
  \frac12 | u_n |^2 + \tau_n \nu_{12} \langle \scrF_{n,1},v_{n,2} \rangle,
\end{multline}
where


\begin{equation}
\label{DisipCoeffDirk22}
\begin{gathered}
\delta_1 = \frac12(\lambda_1 + \lambda_2)(2 -\lambda_1 - \lambda_2) \, , \
\delta_2 = \frac12 \lambda_2(2 - \lambda_2) \, , \
\nu_{11} = a_{11} \left[ \lambda_1(1-\lambda_2) +
\lambda_2(2-\lambda_2) \right] \, , \\
\nu_{22} = a_{22} \lambda_2 \, , \
\nu_{12} = \lambda_2 \left[ a_{21} + a_{11}(\lambda_1 + \lambda_2-2)
\right] \, .
\end{gathered}
\end{equation}

and $\lambda_1, \ \lambda_2$ where introduced in
Corollary~\ref{cor:TwostageExtrapolate}.
\end{theorem}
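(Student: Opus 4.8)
The plan is to expand $\tfrac12|u_{n+1}|^2$ using the extrapolation formula \eqref{FinalSolLinearDirk22} of Corollary~\ref{cor:TwostageExtrapolate} and then convert every resulting quadratic quantity into the form demanded by \eqref{eq:protoenergy} with the help of the identities of Lemma~\ref{lem:InnProds}. Writing $\mu = 1 - \lambda_1 - \lambda_2$, so that $u_{n+1} = \mu u_n + \lambda_1 v_{n,1} + \lambda_2 v_{n,2}$, the square expands into six monomials,
\[
  \tfrac12|u_{n+1}|^2 = \tfrac12\mu^2|u_n|^2 + \tfrac12\lambda_1^2|v_{n,1}|^2 + \tfrac12\lambda_2^2|v_{n,2}|^2 + \mu\lambda_1(u_n,v_{n,1}) + \mu\lambda_2(u_n,v_{n,2}) + \lambda_1\lambda_2(v_{n,1},v_{n,2}).
\]
The goal is to rewrite the five quantities $|v_{n,1}|^2$, $|v_{n,2}|^2$, $(u_n,v_{n,1})$, $(u_n,v_{n,2})$, $(v_{n,1},v_{n,2})$ solely in terms of $|u_n|^2$, the damping terms $|v_{n,1}-u_n|^2$ and $|v_{n,2}-v_{n,1}|^2$, and the three admissible pairings $\langle\scrF_{n,1},v_{n,1}\rangle$, $\langle\scrF_{n,2},v_{n,2}\rangle$, $\langle\scrF_{n,1},v_{n,2}\rangle$; once this is done, matching coefficients yields \eqref{DisipCoeffDirk22}.

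The key observation is that Lemma~\ref{lem:InnProds} provides exactly these five substitutions. Identity \eqref{eq:id1} gives $|v_{n,1}|^2$ directly; chaining \eqref{eq:id1} into \eqref{eq:id2} gives $|v_{n,2}|^2$; and \eqref{eq:id3}, \eqref{eq:id4}, \eqref{eq:id5} then express $(u_n,v_{n,1})$, $(u_n,v_{n,2})$, $(v_{n,1},v_{n,2})$ once the two norm expressions are inserted. I would carry out these substitutions first, noting that the lower-triangular structure ($a_{12}=0$) is precisely what ensures the pairing $\langle\scrF_{n,2},v_{n,1}\rangle$ never appears, so that only the three pairings listed in the statement survive.

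With all substitutions in place I would collect coefficients term by term. The coefficient of $|u_n|^2$ assembles into $\tfrac12(\mu+\lambda_1+\lambda_2)^2$, which equals $\tfrac12$ since $\mu+\lambda_1+\lambda_2=1$; this pins down the right-hand side. The coefficient of $|v_{n,1}-u_n|^2$ comes out as $-\tfrac12(1-\mu^2)=-\tfrac12(\lambda_1+\lambda_2)(2-\lambda_1-\lambda_2)=-\delta_1$, and that of $|v_{n,2}-v_{n,1}|^2$ as $-\tfrac12\lambda_2(2-\lambda_2)=-\delta_2$, using $\mu+\lambda_1=1-\lambda_2$. The pairing coefficients simplify to $\nu_{11}$, $\nu_{22}$, $\nu_{12}$ of \eqref{DisipCoeffDirk22} after repeated use of $\mu+\lambda_1+\lambda_2=1$; for instance $\nu_{22}=a_{22}\lambda_2(\lambda_2+\mu+\lambda_1)=a_{22}\lambda_2$.

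The only genuine obstacle is bookkeeping rather than ideas: the coefficient of $\langle\scrF_{n,1},v_{n,2}\rangle$ couples $a_{11}$, $a_{21}$, $\lambda_1$, $\lambda_2$, $\mu$ and must be reorganized into $\lambda_2[a_{21}+a_{11}(\lambda_1+\lambda_2-2)]$, which is where a computer algebra check is convenient. No tools beyond the polarization identity \eqref{FundIdentity} (already used to establish Lemma~\ref{lem:InnProds}) and elementary collection of like terms are needed; in particular no order conditions on the tableau are invoked, consistent with the statement.
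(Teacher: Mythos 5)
Your proposal is correct and takes essentially the same route as the paper's own proof: expand $\tfrac12|u_{n+1}|^2$ via the extrapolation identity \eqref{FinalSolLinearDirk22} of Corollary~\ref{cor:TwostageExtrapolate}, substitute the five identities \eqref{eq:id1}--\eqref{eq:id5} of Lemma~\ref{lem:InnProds}, and collect coefficients. You in fact carry out more of the coefficient bookkeeping explicitly (and correctly, e.g.\ the checks that the coefficient of $|u_n|^2$ is $\tfrac12$, that of $|v_{n,1}-u_n|^2$ is $-\delta_1$, and $\nu_{22}=a_{22}\lambda_2$) than the paper, which delegates these computations to a computer algebra system.
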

\begin{proof}
We begin by taking the inner product of the extrapolation identity
\eqref{FinalSolLinearDirk22} with itself to obtain
\begin{align*}
  | u_{n+1} |^2 &= \left( 1 - \lambda_1 - \lambda_2 \right)^2 | u_n |^2 +
\lambda_1^2 | v_{n,1} |^2 + \lambda_2^2 | v_{n,2} |^2 \\
  &+2\lambda_1 \left( 1 - \lambda_1 - \lambda_2 \right) (u_n, v_{n,1})
  +2\lambda_2 \left( 1 - \lambda_1 - \lambda_2 \right) (u_n, v_{n,2})
  +2\lambda_1 \lambda_2 (v_{n,1}, v_{n,2}).
\end{align*}

The rest of the proof entails lengthy but trivial computations. One merely
has to substitute \eqref{eq:id1}--\eqref{eq:id5} in the previous identity. The
reader is encouraged to launch their favorite computer algebra system to verify
these computations.
\end{proof}

\begin{remark}[consistency check] A direct computation shows that
$\nu_{11} +\nu_{22} + \nu_{12} = b_1 + b_2$. Assuming that the scheme satisfies
second order conditions, see Appendix \ref{AppOrderCond}, we have that $\nu_{11}
+\nu_{22} + \nu_{12} = 1$.
\end{remark}

We now write a precursor to the dissipative energy identity
\eqref{AbstractDiscBal}.

\begin{corollary}[discrete energy identity II]\label{PropRewritten22}
The discrete energy-balance law \eqref{eq:protoenergy} can be rewritten as
\begin{equation}
\label{GeneralEnergyBal}
\frac{1}{2}|u_{n+1}|^2 + \mathcal{Q}(u_n,v_{n,1},v_{n,2})
- \dt_n \nu_{1} \langle \scrF_{n,1},v_{n,1} \rangle
- \dt_n \nu_{2} \langle \scrF_{n,2} , v_{n,2} \rangle = \frac{1}{2}|u_{n}|^2
\end{equation}
where $\nu_{1} = \nu_{11} + \nu_{12}$, $\nu_{2} = \nu_{22}$, and $\mathcal{Q} :
\mathbb{H}^3 \to \mathbb{R}$ is a quadratic form given by
\begin{equation}\label{QformTwoStage}
  \mathcal{Q}(u_n,v_{n,1},v_{n,2}) = \delta_{1} |v_{n,1} - u_n|^2 + \delta_{2}
|v_{n,2} - v_{n,1}|^2
  - \frac{\nu_{12}}{a_{11}} (v_{n,1} - u_n, v_{n,2} - v_{n,1}) .
\end{equation}
\end{corollary}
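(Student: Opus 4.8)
The plan is to start from the already-established identity \eqref{eq:protoenergy} and simply reorganize its terms so that the off-diagonal pairing $\langle \scrF_{n,1}, v_{n,2}\rangle$ is absorbed into a quadratic form in the increments, at the cost of redefining the diagonal coefficients. First I would rename the coefficient of $\langle \scrF_{n,1}, v_{n,1}\rangle$ by grouping: since \eqref{GeneralEnergyBal} carries $\nu_1 = \nu_{11} + \nu_{12}$ multiplying $\langle \scrF_{n,1}, v_{n,1}\rangle$, while \eqref{eq:protoenergy} carries $\nu_{11}\langle \scrF_{n,1}, v_{n,1}\rangle + \nu_{12}\langle \scrF_{n,1}, v_{n,2}\rangle$, the discrepancy between the two left-hand sides is exactly $\tau_n \nu_{12}\big(\langle \scrF_{n,1}, v_{n,2}\rangle - \langle \scrF_{n,1}, v_{n,1}\rangle\big) = \tau_n \nu_{12}\langle \scrF_{n,1}, v_{n,2} - v_{n,1}\rangle$. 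So the whole task reduces to showing that this leftover term, together with the $\delta_1,\delta_2$ increment-squared terms already present, reassembles into $\mathcal{Q}$ as defined in \eqref{QformTwoStage}.

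The key step is therefore to express $\tau_n \langle \scrF_{n,1}, v_{n,2} - v_{n,1}\rangle$ purely in terms of inner products of the increments, using the stage equations. From \eqref{eq:DIRKappliedtoPDE} the first stage gives $v_{n,1} - u_n = \tau_n a_{11}\scrF_{n,1}$, hence $\tau_n \scrF_{n,1} = \tfrac{1}{a_{11}}(v_{n,1}-u_n)$, which is legitimate because $a_{11}>0$. Substituting this yields
\[
  \tau_n \nu_{12}\langle \scrF_{n,1}, v_{n,2}-v_{n,1}\rangle
  = \frac{\nu_{12}}{a_{11}}\,(v_{n,1}-u_n,\,v_{n,2}-v_{n,1}),
\]
using the Gelfand-triple identification \eqref{eq:NeedForGelfand} to read the duality pairing as an $\mathbb{H}$-inner product since $v_{n,1}-u_n \in \mathbb{H}$. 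Moving the $\langle\scrF_{n,1},v_{n,2}\rangle$ term from the right of \eqref{eq:protoenergy} to the left and performing this replacement collects the three terms $\delta_1|v_{n,1}-u_n|^2 + \delta_2|v_{n,2}-v_{n,1}|^2 - \tfrac{\nu_{12}}{a_{11}}(v_{n,1}-u_n, v_{n,2}-v_{n,1})$, which is precisely $\mathcal{Q}(u_n,v_{n,1},v_{n,2})$ as claimed, while the diagonal terms collapse to $\nu_1\langle\scrF_{n,1},v_{n,1}\rangle + \nu_2\langle\scrF_{n,2},v_{n,2}\rangle$ with $\nu_1=\nu_{11}+\nu_{12}$ and $\nu_2=\nu_{22}$.

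There is essentially no hard obstacle here; the statement is an algebraic rearrangement of \eqref{eq:protoenergy} and the only point requiring care is the rewriting of the first-stage slope via $\tau_n\scrF_{n,1}=a_{11}^{-1}(v_{n,1}-u_n)$, which is exactly why the factor $a_{11}^{-1}$ appears in \eqref{QformTwoStage} and why positivity of $a_{11}$ is invoked. I would close by remarking that \emph{this corollary does not yet assert that $\mathcal{Q}$ is nonnegative definite} — that is the content needed to match Definition~\ref{def:DiscrDissEnergyBalance}, and it is the genuinely substantive property to be verified separately (presumably via the ``remarkable stability'' condition), whereas the identity \eqref{GeneralEnergyBal} itself is purely formal.
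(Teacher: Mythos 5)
Your proposal is correct and follows essentially the same route as the paper's proof: split $\nu_{12}\langle \scrF_{n,1}, v_{n,2}\rangle$ by bilinearity into a diagonal part plus $\nu_{12}\langle \scrF_{n,1}, v_{n,2}-v_{n,1}\rangle$, then use the first stage equation (equivalently, test it with $v_{n,2}-v_{n,1}$) to convert that leftover term into $\tfrac{\nu_{12}}{a_{11}}(v_{n,1}-u_n, v_{n,2}-v_{n,1})$, which assembles $\mathcal{Q}$. Your closing remark that nonnegativity of $\mathcal{Q}$ is a separate matter is also consistent with the paper, which defers it to the remarkable-stability definition and Proposition~\ref{PropPositiv22}.
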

\begin{proof} Exploiting the bilinearity of the duality pairing we have that
\[
  \nu_{12} \langle \scrF_{n,1}, v_{n,2} \rangle = \nu_{12} \langle \scrF_{n,1},
v_{n,1} \rangle
  + \nu_{12} \langle \scrF_{n,1}, v_{n,2} - v_{n,1} \rangle .
\]
We use this identity to replace the last term on the right hand side of
\eqref{eq:protoenergy}. After reorganizing the terms we get
\begin{multline}
\label{AddSubtractTrick}
  \frac12 |u_{n+1}|^2 + \delta_1 |v_{n,1} - u_n|^2 + \delta_2 |v_{n,2} -
v_{n,1}|^2 - \dt_n \left[ (\nu_{11} + \nu_{12}) \langle \scrF_{n,1}, v_{n,1}
\rangle + \nu_{22} \langle \scrF_{n,2}, v_{n,2} \rangle \right] = \\
  \frac12 |u_n|^2 + \dt_n \nu_{12} \langle \scrF_{n,1}, v_{n,2} - v_{n,1}
\rangle.
\end{multline}

Taking the inner product of the first stage of \eqref{eq:DIRKappliedtoPDE} with
$\nu_{12}(v_{n,2}- v_{n,1})$ we get that
\[
  \dt_n \nu_{12} \langle \scrF_{n,1}, v_{n,2} - v_{n,1} \rangle =
\frac{\nu_{12}}{a_{11}} (v_{n,1} - u_n, v_{n,2} - v_{n,1}) .
\]
Inserting this identity into the right hand side of \eqref{AddSubtractTrick}
and reorganizing the terms yields the desired result.
\end{proof}

We note that the discrete energy-balance laws \eqref{eq:protoenergy} and
\eqref{GeneralEnergyBal} do not carry much practical value unless the sign of
the coefficients is correct, and the quadratic form $\mathcal Q$ is
nonnegative. However, if this is the case, our schemes will have all requisite
stability properties. We encode this in the following definition.

\begin{definition}[remarkable stability I]\label{RemarkableDefTwoStage}
We say that the DIRK scheme \eqref{eq:TheDIRK} with $s=2$, $\bv{A}$ lower
triangular and with positive diagonal entries is
remarkably stable if the following conditions hold
\begin{align*}
  \delta_1 \geq 0, \quad
  \delta_2 \geq 0 , \quad
  \nu_1 = \nu_{11} + \nu_{12} > 0 , \quad
  \nu_2 = \nu_{22} > 0, \quad
\end{align*}
with $\delta_1$, $\delta_2$, $\nu_{11}$, $\nu_{22}$, and $\nu_{12}$ defined in
\eqref{DisipCoeffDirk22}, and the quadratic form $\mathcal{Q}$, introduced in
\eqref{QformTwoStage}, is nonnegative definite.
\end{definition}

Remarkable stability defines an exceptional class of schemes for which the off-diagonal term $\langle \scrF_{n,1}, v_{n,2} \rangle$ on the right hand side
of \eqref{eq:protoenergy} can always be absorbed into artificial damping terms
regardless of the nature of $\mathcal{F}$ (coercive, linear, nonlinear,
degenerate, skew symmetric, etc.).

The following result provides sufficient, easy to check, conditions for
$\mathcal Q$ to be nonnegative.

\begin{proposition}[nonnegativity] \label{PropPositiv22}
Assume that, in the setting of Corollary~\ref{PropRewritten22}, we have
\[
  \delta_1 \geq 0 , \qquad
  \delta_2 \geq 0, \qquad
  \left|\frac{\nu_{12}}{a_{11}} \right| \leq 2 \sqrt{\delta_1} \sqrt{\delta_2} .
\]
Then, the quadratic from $\mathcal{Q}$, introduced in \eqref{QformTwoStage} in
nonnegative definite.
\end{proposition}
\begin{proof}
If $\delta_1 \geq 0$ and $\delta_2 \geq 0$ we have that
\begin{align*}
  \delta_{1}|v_{n,1} - u_n|^2 \pm 2 \sqrt{\delta_{1}} \sqrt{\delta_{2}} (v_{n,1}
- u_n, v_{n,2} - v_{n,1}) + \delta_{2} |v_{n,2} - v_{n,1}|^2 \geq 0 .
\end{align*}
Therefore,
\begin{align*}
\pm 2 \sqrt{\delta_{1}} \sqrt{\delta_{2}} (v_{n,1} - u_n, v_{n,2} - v_{n,1})
\leq \delta_{1}|v_{n,1} - u_n|^2 + \delta_{2} |v_{n,2} - v_{n,1}|^2.
\end{align*}
This, in particular, implies that
\begin{align*}
\beta (v_{n,1} - u_n, v_{n,2} - v_{n,1}) \leq \delta_{1}|v_{n,1} - u_n|^2 +
\delta_{2} |v_{n,2} - v_{n,1}|^2,
\end{align*}
for all $\beta \in \mathbb{R}$ satisfying $|\beta| \leq 2 \sqrt{\delta_{1}}
\sqrt{\delta_{2}}$.
\end{proof}

\begin{remark}[nonnegativity]
We comment that the condition of Proposition~\ref{PropPositiv22} is only
sufficient. Necessary and sufficient conditions are obtained by looking at the
spectrum of the coefficient matrix of the quadratic form $\mathcal{Q}$. In this
case we have
\[
  \bv{Q} = \begin{bmatrix}
           \delta_1 & -\delta_1 - \frac{\nu_{12}}{a_{11}} &
\frac{\nu_{12}}{a_{11}}\\
           -\delta_1 - \frac{\nu_{12}}{a_{11}} & \delta_1 + \delta_2 +
\frac{2\nu_{12}}{a_{11}}& -\delta_2 - \frac{\nu_{12}}{a_{11}}\\
           \frac{\nu_{12}}{a_{11}} & -\delta_2 - \frac{\nu_{12}}{a_{11}}   &
\delta_2
         \end{bmatrix} .
\]
Lengthy and painful, but trivial, computations reveal that
\[
  \sigma(\bv{Q})= \left\{ 0, \delta_1 + \delta_2 + \frac{\nu_{12}}{2a_{11}} \pm
  \sqrt{
    \delta_1^2 - \delta_1 \delta_2 + \delta_2^2 + (\delta_1 + \delta_2)
\frac{\nu_{12}}{a_{11}} + \left(\frac{\nu_{12}}{a_{11}} \right)^2
  } \right\}.
\]
\end{remark}

\begin{remark}[computational aspects] Two important aspects of computational
practice are time-adaptivity and nonlinear solver tolerances. Assume that we
are using a remarkably stable scheme and that we are able to solve for the
stages $\{v_{n,1}, v_{n,2}\}$ exactly (i.e., to machine accuracy). As a
consequence, we obtain
\begin{align*}
  \frac{1}{2}\left( |u_{n+1}|^2 - |u_{n}|^2 \right) - \dt_n \nu_{1} \langle
\scrF_{n,1},v_{n,1} \rangle - \dt_n \nu_{2} \langle \scrF_{n,2},v_{n,2} \rangle
= - \mathcal{Q}(u_n,v_{n,1},v_{n,2}) \leq 0.
\end{align*}
We note that the functional $\mathcal{Q}(u_n,v_{n,1},v_{n,2})$ gives us
exactly how much numerical dissipation occurred from time instance $t_n$ to
$t_{n+1}$.  In this context, the value of the quadratic form
$\mathcal{Q}(u_n,v_{n,1},v_{n,2})$ may be used as the foundation
for the development of an heuristic error indicator in order to drive a
time-adaptive process. We note that using numerical dissipation
as an a posteriori estimator in order to select the timestep size is not a new
idea, see for instance \cite[Remark 3.4]{Makri2006} and references therein.

However, in the previous paragraph, we made a very strong assumption: we can
solve the nonlinear problems at each stage ``exactly'', which is rarely ever
true. In that such context, we may not be able to use $\mathcal{Q}$ in order to
quantify numerical dissipation. Let $\{\tilde v_{n,1}, \tilde v_{n,2},\tilde
u_{n+1}\}$ represent our ``inexact approximations'' of the first stage
$v_{n,1}$, second stage $v_{n,2}$ and final solution $u_{n+1}$ respectively,
then we can always define the functional
\begin{align*}
  \eta( \tilde v_{n,1}, \tilde v_{n,2},\tilde u_{n+1} ) = \frac{1}{2}\left(
|\tilde u_{n+1}|^2 - |u_{n}|^2 \right) - \dt_n \nu_{1} \langle \tilde
\scrF_{n,1}, \tilde v_{n,1} \rangle - \dt_n \nu_{2} \langle \tilde
\scrF_{n,2},
\tilde v_{n,2} \rangle,
\end{align*}
where, for $i=1,2$, $\tilde\scrF_{n,i}$ has the expected meaning. Indeed, if
$\eta$ is sufficiently negative, we may argue that the scheme implementation
exhibits numerically dissipative behavior. Otherwise, the numerical tolerances
and/or the timestep need to be reduced, and the whole time-step should be solved
again.
\end{remark}

\subsection{Examples of two-stage remarkably stable
schemes}\label{sub:ExampleTwoStageRemarkable}
Let us now explore whether the schemes of Section~\ref{subsub:TwoStageList} are
remarkably stable.

\subsubsection{The Butcher-Burrage DIRK22 scheme}\label{RemarkBB}
We consider the two-stage scheme with tableau given in
\eqref{ButcherBurrageTableau}.
Using formulas \eqref{Lambda12general} and
\eqref{DisipCoeffDirk22}, for the case of $\gamma_1$ we get
\begin{align*}
\lambda_1 = -\tfrac{\sqrt{2}}{2}\, , \
\lambda_2 = 1+\tfrac{\sqrt{2}}{2} \, , \
\delta_1 = \tfrac{1}{2} \, , \
\delta_2 = \tfrac{1}{4} \, , \
\nu_{11} = 1 - \tfrac{\sqrt{2}}{2} \, , \
\nu_{22} = \tfrac{1}{2} \, , \
\nu_{12} = -\tfrac{1}{2} + \tfrac{\sqrt{2}}{2} \, , \
\end{align*}
which leads to the following properties
\begin{align*}
\nu_{1} = \tfrac{1}{2} \ , \ \
\nu_{2} = \tfrac{1}{2} \ , \ \
\tfrac{\nu_{12}}{a_{11}} = \tfrac{\sqrt{2}}{2} \ , \ \
2 \sqrt{\delta_1} \sqrt{\delta_2} = \tfrac{\sqrt{2}}{2} \, .
\end{align*}
We conclude that Butcher-Burrage scheme \eqref{ButcherBurrageTableau} is
remarkably-stable for the case of $\gamma_1$. For the case of $\gamma_2$ we
obtain:
%
%
%
\begin{align*}
\lambda_1 = \tfrac{\sqrt{2}}{2}\, , \
\lambda_2 = 1-\tfrac{\sqrt{2}}{2} \, , \
\delta_1 = \tfrac{1}{2} \, , \
\delta_2 = \tfrac{1}{4} \, , \
\nu_{11} = \tfrac{\sqrt{2}}{2} + 1 \, , \
\nu_{22} = \tfrac{1}{2} \, , \
\nu_{12} = -\tfrac{\sqrt{2}}{2} - \tfrac{1}{2} \, , \
\end{align*}
leading to:
\begin{align*}
\nu_{1} = \tfrac{1}{2} \ , \ \
\nu_{2} = \tfrac{1}{2} \ , \ \
\tfrac{\nu_{12}}{a_{11}} = - \tfrac{\sqrt{2}}{2} \ , \ \
2 \sqrt{\delta_1} \sqrt{\delta_2} = \tfrac{\sqrt{2}}{2} \, .
\end{align*}
We conclude that Butcher-Burrage scheme is remarkably stable for the case of
$\gamma_2$ as well.

\subsubsection{The Crouzeix DIRK23 scheme}\label{RemarkCro}
We consider the two-stage scheme with tableau described in
\eqref{CrouzeixTableau}. Using formulas \eqref{Lambda12general} and
\eqref{DisipCoeffDirk22} we get
\begin{align*}
\lambda_1 = \tfrac{3 \sqrt{3}}{2} - \tfrac{3}{2} \, , \
\lambda_2 = \tfrac{3}{2} - \tfrac{\sqrt{3}}{2}\, , \
\delta_1 = \sqrt{3} - \tfrac{3}{2} \, , \
\delta_2 = \tfrac{\sqrt{3}}{4} \, , \
\nu_{11} = 1 \, , \
\nu_{22} = \tfrac{1}{2} \, , \
\nu_{12} = -\tfrac{1}{2} \, , \
\end{align*}
which leads to the following values
\begin{align*}
\nu_1 = \tfrac{1}{2} \, , \ \
\nu_2 = \tfrac{1}{2} \, , \ \
\tfrac{\nu_{12}}{a_{11}} = \tfrac{\sqrt{3}}{2} - \tfrac{3}{2} \ , \ \
2 \sqrt{\delta_1} \sqrt{\delta_2} = \tfrac{3}{2} - \tfrac{\sqrt{3}}{2} \, ,
\end{align*}
which allows us to conclude that the Crouzeix DIRK23 scheme
\eqref{CrouzeixTableau} is remarkably stable.

\begin{remark}[quadrature]
We note that the pair of collocation points $\{c_1,c_2\}$ and weights
$\{\nu_{1},\nu_{2}\}$ associated to the Crouzeix DIRK23 scheme define a
quadrature rule on the interval $[0,1]$ that is exact for polynomials of degree
at most three. This might facilitate the derivation of the ``equation satisfied
by the error'' and the development of an a priori error analysis without the
need of defining a quadratic in time piecewise polynomial reconstruction.
\end{remark}

\subsubsection{Alexander's DIRK22 scheme}\label{RemarkAlex22}
We consider the two-stage scheme with tableau described in
\eqref{AlexDIRK22tableu}. Note that in this case $b_1 = a_{21}$ and $b_2 =
a_{22}$. Using formulas \eqref{Lambda12general} and \eqref{DisipCoeffDirk22} we
get
\begin{align*}
\lambda_1 = 0 \, , \
\lambda_2 = 1 \, , \
\delta_1 = \tfrac{1}{2} \, , \
\delta_2 = \tfrac{1}{2} \, , \
\nu_{11} = 1 - \tfrac{\sqrt{2}}{2} \, , \
\nu_{22} = 1 - \tfrac{\sqrt{2}}{2} \, , \
\nu_{12} = \sqrt{2} -1\, , \
\end{align*}
which leads to the following properties
\begin{align*}
\nu_1 = \tfrac{\sqrt{2}}{2} \, , \
\nu_2 = 1 - \tfrac{\sqrt{2}}{2} \, , \ \
\sigma(\bv{Q}) &= \left\{0, \frac{1- \sqrt2}2, \frac{3(1+\sqrt2)}2 \right\}
\approx \{0.,-0.2017107,3.62132\}.
\end{align*}
This allows us to conclude that Alexander's DIRK22 scheme
\eqref{AlexDIRK22tableu} is \textbf{not} remarkably stable. \\

We comment that, in principle, the fact that a DIRK scheme is not remarkably-stable does not
mean that it should not be used. As detailed in Appendix \ref{AlexanderAppendix},
if a scheme is not remarkably stable it is, in principle, not possible to guarantee
energy-stability if the operator $\mathcal{A}$ is skew-symmetric (i.e.
$\langle\mathcal{A}(u),v\rangle = - \langle\mathcal{A}(v),u\rangle$).
For this reason, if a scheme is not remarkably stable, its utility may be
limited to linear, coercive, self-adjoint problems; see, again, Appendix~\ref{AlexanderAppendix}.

\subsubsection{Kraaijevanger-Spijker's DIRK22 scheme}\label{RemarkKra}
We consider the two-stage scheme with tableau described in
\eqref{KraaijDIRK22}.
Using formulas \eqref{Lambda12general} and \eqref{DisipCoeffDirk22} we get
\begin{align*}
\lambda_1 = - \tfrac{1}{4} \, , \
\lambda_2 = \tfrac{3}{4} \, , \
\delta_1 = \tfrac{3}{8} \, , \
\delta_2 = \tfrac{15}{32} \, , \
\nu_{11} = \tfrac{7}{16} \, , \
\nu_{22} = \tfrac{3}{2} \, , \
\nu_{12} = \
- \tfrac{15}{16} \, , \
\end{align*}
which leads to the following properties
\begin{align*}
\nu_1 = - \tfrac{1}{2} \, , \
\nu_2 = \tfrac{3}{2} \, .
\end{align*}
This allows us to conclude DIRK22 scheme \eqref{KraaijDIRK22} is \textbf{not}
remarkably stable. Regardless of the sign of $\mathcal{Q}$, the coefficient
$\nu_1$ is negative. For these reasons, the applicability of such scheme to
evolutionary PDEs, even in the case of linear, positive, and symmetric operators, is
rather limited.


\section{Three-stage schemes}\label{sec:ThreeStage}

The goal of this section is similar to that of the previous one, that is, we
will introduce a class of \emph{remarkably stable} DIRK schemes, see
Definition~\ref{def:RemarkableThreeStage}, that satisfy a dissipative discrete
energy-balance. Conceptually, this section is no different than the previous
one. The logical steps to achieve our goal are exactly the same. The algebraic
manipulations, however, are more involved and tedious. We urge the reader to
take advantage of a computer algebra system.

\subsection{General discrete energy-balance
laws}\label{sub:EnergyBalanceThreeStage}

We, once again, specialize Proposition~\ref{prop:extrapolateDIRK} to the case of
a three-stage DIRK.

\begin{corollary}[three-stage extrapolation]
\label{cor:ThreeStageExtrapolate}
Let the RK scheme \eqref{eq:TheDIRK} be such that $s=3$, $\bv{A}$ is lower
triangular, and with positive diagonal entries. Then, for all $n\geq 0$,
\begin{equation}
\label{FinalSolLinearDirk33}
  u_{n+1} = \left( 1- \lambda_1 - \lambda_2 - \lambda_3 \right) u_n + \lambda_1
v_{n,1} + \lambda_2 v_{n,2} + \lambda_3 v_{n,3},
\end{equation}
with
\begin{equation}
\label{LambdaDIRK33}
  \lambda_1 =  \frac{b_1}{a_{11}} - \frac{b_2 a_{21}}{a_{11} a_{22}} - \frac{b_3
a_{31}}{a_{11}a_{33}} + \frac{b_3 a_{32} a_{21}}{a_{11} a_{22} a_{33}},  \quad
  \lambda_2 = \frac{b_2}{a_{22}} - \frac{b_3 a_{32}}{a_{22}a_{33}}, \quad
  \lambda_3 = \frac{b_3}{a_{33}}.
\end{equation}
\end{corollary}
\begin{proof}
It follows from a direct computation. The fact that $\bv{A}$ is lower triangular
 simplifies these.
\end{proof}

Next we obtain some more useful identities.

\begin{lemma}[more useful identities]
Let $s=3$. For any $\frakN \in \polN$, any partition $\calP_\frakN$, and all
$n \in \{ 0, \ldots, \frakN -1 \}$ we have that $\{u_n,
v_{n,1},v_{n,2},v_{n,3},u_{n+1} \}$, coming from \eqref{eq:DIRKappliedtoPDE}
with $a_{ii}>0$, satisfy \eqref{eq:id1}---\eqref{eq:id5}, and, additionally,
\begin{multline}
\label{eq:id6}
  \frac12 \left( | v_{n,3} |^2 + | v_{n,3} - v_{n,2} |^2 - | v_{n,2} |^2\right)
= \\ \dt_n \left[ (a_{31}-a_{21})\langle \scrF_{n,1},v_{n,3} \rangle +
(a_{32}-a_{22}) \langle \scrF_{n,2}, v_{n,3} \rangle + a_{33} \langle
\scrF_{n,3}, v_{n,3} \rangle \right],
\end{multline}
\begin{equation}
\label{eq:id7}
  | v_{n,3} |^2 -(u_n,v_{n,3}) = \tau_n \left[ a_{31}\langle \scrF_{n,1},
v_{n,3} \rangle + a_{32}\langle \scrF_{n,2}, v_{n,3} \rangle + a_{33} \langle
\scrF_{n,3}, v_{n,3} \rangle \right],
\end{equation}
\begin{equation}
\label{eq:id8}
  | v_{n,3} |^2 -(v_{n,2},v_{n,3}) = \tau_n \left[ (a_{31}-a_{21}) \langle
\scrF_{n,1}, v_{n,3} \rangle + (a_{32}- a_{22})\langle \scrF_{n,2}, v_{n,3}
\rangle + a_{33} \langle \scrF_{n,3}, v_{n,3} \rangle \right],
\end{equation}
\begin{equation}
\label{eq:id9}
  | v_{n,3} |^2 -(v_{n,1},v_{n,3}) = \tau_n \left[ (a_{31}-a_{11}) \langle
\scrF_{n,1}, v_{n,3} \rangle + a_{32}\langle \scrF_{n,2}, v_{n,3} \rangle +
a_{33} \langle \scrF_{n,3}, v_{n,3} \rangle \right],
\end{equation}
\end{lemma}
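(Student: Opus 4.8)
The plan is to prove these identities exactly as the earlier Lemma~\ref{lem:InnProds} was proven for the two-stage case: by testing the appropriate stage equation (or a suitable combination of stage equations) from \eqref{eq:DIRKappliedtoPDE} against a well-chosen element of $\polV$, using the Gelfand-triple identification \eqref{eq:NeedForGelfand} to convert duality pairings into inner products, and invoking the polarization identity \eqref{FundIdentity} where a squared-norm structure appears. The key observation is that the third stage reads $v_{n,3} = u_n + \dt_n \left[ a_{31}\scrF_{n,1} + a_{32}\scrF_{n,2} + a_{33}\scrF_{n,3}\right]$, so that for any test element $w \in \polH \subset \polV$ we have the single master relation

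$$
 (v_{n,3} - u_n, w) = \dt_n \left[ a_{31}\langle \scrF_{n,1}, w\rangle + a_{32}\langle \scrF_{n,2}, w\rangle + a_{33}\langle \scrF_{n,3}, w\rangle \right].
$$

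First, I would derive \eqref{eq:id7} by choosing $w = v_{n,3}$ directly in the master relation, which immediately gives $|v_{n,3}|^2 - (u_n, v_{n,3})$ on the left and the stated right-hand side. Next, to obtain \eqref{eq:id6}, I would again test the third stage with $w = v_{n,3}$ but now rewrite the left-hand side $(v_{n,3} - u_n, v_{n,3})$ by first subtracting off the $v_{n,2}$-stage: since $v_{n,3} - v_{n,2} = \dt_n\left[(a_{31}-a_{21})\scrF_{n,1} + (a_{32}-a_{22})\scrF_{n,2} + a_{33}\scrF_{n,3}\right]$ (obtained by subtracting the stage-2 equation from the stage-3 equation, both built on the same $u_n$), testing this difference against $v_{n,3}$ and applying polarization \eqref{FundIdentity} with $a = v_{n,3}$, $b = v_{n,2}$ yields $\tfrac12\left(|v_{n,3}|^2 + |v_{n,3}-v_{n,2}|^2 - |v_{n,2}|^2\right)$ on the left and exactly the coefficients in \eqref{eq:id6} on the right.

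For \eqref{eq:id8} and \eqref{eq:id9} I would use the same two stage-difference relations but test against different vectors: \eqref{eq:id8} follows by testing the stage-3-minus-stage-2 difference with $w = v_{n,3}$ (giving $|v_{n,3}|^2 - (v_{n,2}, v_{n,3})$ on the left), and \eqref{eq:id9} follows by testing the stage-3-minus-stage-1 difference $v_{n,3} - v_{n,1} = \dt_n\left[(a_{31}-a_{11})\scrF_{n,1} + a_{32}\scrF_{n,2} + a_{33}\scrF_{n,3}\right]$ with $w = v_{n,3}$. The assertion that \eqref{eq:id1}--\eqref{eq:id5} continue to hold requires only the remark that those identities involve only the first two stages, whose defining equations are structurally identical in the three-stage tableau, so their proofs carry over verbatim.

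I do not anticipate a genuine obstacle here: every identity reduces to one inner-product test followed by at most one application of polarization, and the bookkeeping of the lower-triangular coefficients $a_{ij}$ is entirely mechanical. The only point demanding mild care is tracking which stage difference produces which combination of coefficients — in particular ensuring the $(a_{3k}-a_{jk})$ patterns match the stated right-hand sides — but this is routine arithmetic rather than a conceptual difficulty, and as with the two-stage lemma one may verify it with a computer algebra system.
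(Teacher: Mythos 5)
Your proposal is correct and follows essentially the same route as the paper: \eqref{eq:id7} by testing the third stage with $v_{n,3}$, \eqref{eq:id6} and \eqref{eq:id8} by testing the difference of the third and second stages with $v_{n,3}$ (with and without polarization \eqref{FundIdentity}, respectively), \eqref{eq:id9} by testing the difference of the third and first stages with $v_{n,3}$, and \eqref{eq:id1}--\eqref{eq:id5} carrying over because the lower-triangular structure leaves the first two stages unchanged. The only blemish is the inverted inclusion ``$w \in \polH \subset \polV$'' in your master relation --- the Gelfand triple gives $\polV \subset \polH$, and test elements must lie in $\polV$ for the duality pairings to make sense --- but since every vector you actually test with is a stage value belonging to $\polV$, this does not affect the argument.
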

\begin{proof}
We, first of all, note that owing to the fact that we are dealing with a DIRK
scheme we can compute sequentially the states. Therefore, identities
\eqref{eq:id1}---\eqref{eq:id5} remain to hold. The new identities can be
obtained as before. Identity \eqref{eq:id6} comes from testing the third stage
by $v_{n,3}$ and applying the polarization identity
\eqref{FundIdentity}. Identity \eqref{eq:id7} comes from testing the third
stage by $v_{n,3}$. Identity \eqref{eq:id8} comes from testing the third stage
by $v_{n,2}$. Combining the first and third stage, and testing the result with
$v_{n,3}$ gives \eqref{eq:id9}.
\end{proof}

With these identities at hand we can prove an analogue of \eqref{eq:protoenergy}
for the case $s=3$.

\begin{theorem}[discrete energy identity I]\label{thm:energyidentity3stage}
Let $s=3$. For any $\frakN \in \polN$, any partition $\calP_\frakN$, and any
$n \in \{0, \ldots, \frakN-1\}$ we have that the solution of
\eqref{eq:DIRKappliedtoPDE}, with $a_{ii}>0$, satisfies
\begin{multline}
\label{Dirk33GeneralEnergyID}
  \frac12 | u_{n+1} |^2 + \sum_{i=1}^3 \delta_i | v_{n,i} - v_{n,i-1} |^2
  - \tau_n \sum_{i=1}^3 \nu_{ii} \langle \scrF_{n,i} ,v_{n,i} \rangle = \\
  \frac12 | u_n |^2 + \tau_n \left[ \nu_{12} \langle \scrF_{n,1}, v_{n,2}
\rangle + \nu_{13} \langle \scrF_{n,1}, v_{n,3}\rangle + \nu_{23} \langle
\scrF_{n,2}, v_{n,3} \rangle \right],
\end{multline}
where $v_{n,0} = u_n$,
\begin{equation}
\label{EnergyCoeff33}
  \begin{aligned}
    \delta_1 &= \frac12(\lambda_1 + \lambda_2 + \lambda_3)(2-\lambda_1 -
\lambda_2 - \lambda_3) , \\
    \delta_2 &= \frac12(\lambda_2 + \lambda_3)(2-\lambda_2 - \lambda_3), \\
    \delta_3 &= \frac12\lambda_3 (2-\lambda_3), \\
    \nu_{11} &=  a_{11} (\lambda_1 (1 - \lambda_2 - \lambda_3) + (2 - \lambda_2
- \lambda_3) (\lambda_2 + \lambda_3)), \\
    \nu_{22} &=  a_{22} (\lambda_2 (1 - \lambda_3) + (2 - \lambda_3) \lambda_3),
\\
    \nu_{33} &= a_{33}\lambda_3, \\
    \nu_{12} &=  [a_{21} (\lambda_2 (1- \lambda_3) + (2 - \lambda_3) \lambda_3)
+ a_{11} (\lambda_2 (-2 + \lambda_1 + \lambda_2) + 2 (-1 + \lambda_2) \lambda_3
+ \lambda_3^2)], \\
    \nu_{13} &=  \lambda_3 (a_{31} + a_{11} \lambda_1 + a_{21} (-2 + \lambda_2 +
\lambda_3)), \\
    \nu_{23} &=  \lambda_3 (a_{32} + a_{22} (-2 + \lambda_2 + \lambda_3)),
  \end{aligned}
\end{equation}
and $\lambda_1, \lambda_2, \lambda_3$ were defined in
Corollary~\ref{cor:ThreeStageExtrapolate}.
\end{theorem}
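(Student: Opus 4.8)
The proof strategy mirrors exactly the two-stage case (Theorem~\ref{thm:protoenergy}), so the plan is to compute $|u_{n+1}|^2$ by taking the inner product of the three-stage extrapolation identity \eqref{FinalSolLinearDirk33} with itself, and then substitute the useful identities \eqref{eq:id1}--\eqref{eq:id9} to eliminate all the inner products in favor of the duality pairings $\langle \scrF_{n,i}, v_{n,j}\rangle$. First I would expand
\begin{align*}
  |u_{n+1}|^2 &= (1 - \Lambda)^2 |u_n|^2 + \sum_{i=1}^3 \lambda_i^2 |v_{n,i}|^2 \\
  &\quad + 2(1-\Lambda)\sum_{i=1}^3 \lambda_i (u_n, v_{n,i}) + 2\sum_{i<j} \lambda_i \lambda_j (v_{n,i}, v_{n,j}),
\end{align*}
where I abbreviate $\Lambda = \lambda_1 + \lambda_2 + \lambda_3$. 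This produces ten distinct quadratic terms: three self-products $|v_{n,i}|^2$, the term $|u_n|^2$, three mixed products $(u_n, v_{n,i})$, and three stage-stage products $(v_{n,i}, v_{n,j})$ with $i<j$.

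The key step is that every one of these terms can be rewritten using the identities of the lemma. The products $(u_n, v_{n,1})$, $(u_n, v_{n,2})$, $(u_n, v_{n,3})$ are handled by \eqref{eq:id3}, \eqref{eq:id4}, \eqref{eq:id7} respectively; the stage-stage products $(v_{n,1}, v_{n,2})$, $(v_{n,2}, v_{n,3})$, $(v_{n,1}, v_{n,3})$ by \eqref{eq:id5}, \eqref{eq:id8}, \eqref{eq:id9}; and the self-products $|v_{n,i}|^2$ together with the damping differences $|v_{n,i} - v_{n,i-1}|^2$ are assembled via the polarization-based identities \eqref{eq:id1}, \eqref{eq:id2}, \eqref{eq:id6}. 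In each case the right-hand side is linear in the pairings $\langle \scrF_{n,i}, v_{n,j}\rangle$, so after substitution the entire expression for $\tfrac12|u_{n+1}|^2$ becomes $\tfrac12|u_n|^2$ plus $\tau_n$ times a linear combination of these pairings, plus the quadratic damping terms. Collecting the coefficient of each pairing $\langle\scrF_{n,i},v_{n,i}\rangle$ yields $\nu_{ii}$, the coefficient of each off-diagonal $\langle\scrF_{n,i},v_{n,j}\rangle$ with $i<j$ yields $\nu_{ij}$, and the coefficients of the squared differences yield the $\delta_i$; matching these against \eqref{EnergyCoeff33} is precisely the content of the theorem.

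The main obstacle is purely the bookkeeping: because several identities (e.g.\ \eqref{eq:id5} and \eqref{eq:id8}) contribute to the same pairing through different stage-stage products, one must carefully track how the factors $\lambda_i\lambda_j$ and $(1-\Lambda)\lambda_i$ distribute across the $a_{ij}$ entries, and then verify that the accumulated coefficient simplifies to the stated closed forms. In particular the expressions for $\nu_{12}$, $\nu_{13}$, $\nu_{23}$ blend terms from \eqref{eq:id4}, \eqref{eq:id5}, \eqref{eq:id7}, \eqref{eq:id8}, \eqref{eq:id9}, and untangling which $a_{ij}\lambda_k$ products land where is where errors are most likely. This is exactly the sort of long but mechanical algebra the authors flag, so I would delegate the expansion and collection of coefficients to a computer algebra system, using the $s=2$ derivation as a template and verifying the result by checking the consistency relation $\sum_i \nu_{ii} + \sum_{i<j}\nu_{ij} = b_1 + b_2 + b_3$ (which equals $1$ under the second-order condition), analogous to the consistency check remark following Theorem~\ref{thm:protoenergy}.
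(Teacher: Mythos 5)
Your proposal is correct and follows essentially the same route as the paper's own proof: the paper likewise takes the inner product of the extrapolation identity of Corollary~\ref{cor:ThreeStageExtrapolate} with itself and substitutes identities \eqref{eq:id1}--\eqref{eq:id9}, delegating the lengthy coefficient collection to a computer algebra system. Your mapping of which identity eliminates which product, and the consistency check $\sum_i \nu_{ii} + \sum_{i<j}\nu_{ij} = b_1+b_2+b_3$, match the paper's argument and its accompanying remark.
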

\begin{proof}
We once again invite the reader to launch their favorite computer algebra
software, as the proof of this result merely entails lengthy and tortuous, but
trivial, computations. One merely has to take the inner product of the
extrapolation identity of Corollary~\ref{cor:ThreeStageExtrapolate} with itself,
and use in the result identities \eqref{eq:id1}---\eqref{eq:id5} and
\eqref{eq:id6}---\eqref{eq:id9}.
\end{proof}

\begin{remark}[consistency check]
Direct computation shows that $\nu_{11} +\nu_{22} + \nu_{33} + \nu_{12} +
\nu_{13} + \nu_{23} = b_1 + b_2 + b_3$. Assuming that the scheme satisfies
second-order conditions, see Appendix~\ref{AppOrderCond}, we have
that $\nu_{11} +\nu_{22} + \nu_{33} + \nu_{12} + \nu_{13} + \nu_{23} = 1$.
\end{remark}

As we did in the previous section for the case of two-stage schemes, we
rewrite \eqref{Dirk33GeneralEnergyID} in a form that will resemble a dissipative
discrete energy law.

\begin{corollary}[discrete energy identity II]
The discrete energy identity \eqref{Dirk33GeneralEnergyID} can be rewritten as
\begin{equation}
\label{EnergyID33poly}
  \frac12 |u_{n+1}|^2 + \mathcal{Q}(u_n, v_{n,1}, v_{n,2}, v_{n,3}) - \dt_n
\sum_{i=1}^3 \nu_i \langle \scrF_{n,i},v_{n,i} \rangle = \frac 12 |u_n|^2,
\end{equation}
where, for $i = 1, \ldots, 3$,
\begin{equation}
\label{eq:NuSingleIndexThreeStage}
  \nu_i = \nu_{ii} + \sum_{j=i+1}^3 \nu_{ij},
\end{equation}
the quadratic form $\mathcal Q$ is given by
\begin{equation}
\label{Qform33}
  \begin{aligned}
    \mathcal{Q}(u_n,v_{n,1},v_{n,2},v_{n,3}) &= \delta_{1} |v_{n,1} - u_n|^2 +
\delta_{2} |v_{n,2} - v_{n,1}|^2
    + \delta_{3} |v_{n,3} - v_{n,2}|^2 \\&- \frac{\nu_{12}}{a_{11}} (v_{n,1} -
u_n, v_{n,2} - v_{n,1}) \\
    & - \frac{\nu_{13}}{a_{11}} (v_{n,1} - u_n, v_{n,3} - v_{n,1})
    - \frac{\nu_{23}}{a_{22}}(v_{n,2} - u_n, v_{n,3} - v_{n,2}) \\
    &+ \frac{\nu_{23} a_{21}}{a_{11} a_{22}} (v_{n,1} - u_n, v_{n,3} - v_{n,2})
,
  \end{aligned}
\end{equation}
and $\delta_1$, $\delta_2$, $\delta_3$, $\nu_{11}$, $\nu_{22}$, $\nu_{33}$,
$\nu_{12}$, $\nu_{13}$ and $\nu_{23}$ are defined in \eqref{EnergyCoeff33}.
\end{corollary}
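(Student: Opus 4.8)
The plan is to follow exactly the same strategy used to pass from Theorem~\ref{thm:protoenergy} to Corollary~\ref{PropRewritten22} in the two-stage case, now carried out for $s=3$. The starting point is the discrete energy identity \eqref{Dirk33GeneralEnergyID}. On its right-hand side there are three off-diagonal duality pairings, namely $\langle \scrF_{n,1}, v_{n,2} \rangle$, $\langle \scrF_{n,1}, v_{n,3} \rangle$, and $\langle \scrF_{n,2}, v_{n,3} \rangle$. The goal is to absorb each of these into a combination of the diagonal terms $\langle \scrF_{n,i}, v_{n,i} \rangle$ (which is exactly why \eqref{eq:NuSingleIndexThreeStage} defines $\nu_i$ by summing $\nu_{ii}$ with the off-diagonal coefficients in its row) and into the quadratic form $\mathcal{Q}$ built from differences of consecutive stages.

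First I would use the bilinearity of the duality pairing to split each off-diagonal pairing as a diagonal piece plus a remainder expressed in terms of stage differences. Concretely, I would write $\langle \scrF_{n,1}, v_{n,2}\rangle = \langle \scrF_{n,1}, v_{n,1}\rangle + \langle \scrF_{n,1}, v_{n,2}-v_{n,1}\rangle$, and similarly decompose $\langle \scrF_{n,1}, v_{n,3}\rangle$ and $\langle \scrF_{n,2}, v_{n,3}\rangle$ against appropriate telescoping differences. Substituting these into \eqref{Dirk33GeneralEnergyID} and collecting the diagonal contributions produces precisely the coefficients $\nu_i$ in \eqref{eq:NuSingleIndexThreeStage}, leaving behind only terms of the form $\dt_n \nu_{ij}\langle \scrF_{n,i}, (\text{later stage difference})\rangle$.

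Next I would convert each of those remaining $\dt_n \langle \scrF_{n,i}, \cdot\rangle$ terms into $\polH$-inner products of stage differences, exactly as was done in the proof of Corollary~\ref{PropRewritten22}. The key tool is that the $i$-th stage equation in \eqref{eq:DIRKappliedtoPDE} can be tested against any difference to trade a duality pairing for an inner product; for the first stage this gives $\dt_n \langle \scrF_{n,1}, w\rangle = \tfrac{1}{a_{11}}(v_{n,1}-u_n, w)$, and for the second stage one must first eliminate the $\scrF_{n,1}$ contribution so as to obtain a clean relation $\dt_n\langle \scrF_{n,2}, w\rangle$ in terms of $(v_{n,2}-v_{n,1})$ and $(v_{n,1}-u_n)$, which is the origin of the mixed term with coefficient $\tfrac{\nu_{23} a_{21}}{a_{11}a_{22}}$ in \eqref{Qform33}. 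Carrying out these substitutions and regrouping yields $\mathcal{Q}$ as written, with the $\delta_i$ terms carried over unchanged from \eqref{Dirk33GeneralEnergyID}.

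The main obstacle I anticipate is purely bookkeeping: tracking which stage difference each off-diagonal pairing should be tested against so that the telescoping is consistent and no spurious cross-terms survive. In particular the term $\langle \scrF_{n,2}, v_{n,3}\rangle$ must be handled using the \emph{second} stage relation, which itself references $v_{n,1}-u_n$ through $a_{21}$, so the algebra genuinely couples the rows of $\bv{A}$ and is the source of the last line of \eqref{Qform33}. Since everything is exact rewriting of an identity, no sign or positivity argument is needed here; the verification reduces to lengthy but mechanical algebra, and I would simply confirm with a computer algebra system that the collected coefficients match \eqref{eq:NuSingleIndexThreeStage} and \eqref{Qform33}.
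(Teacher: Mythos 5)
Your proposal is correct and is essentially the paper's own proof: you split each off-diagonal pairing $\langle\scrF_{n,i},v_{n,j}\rangle$ into its diagonal part plus a pairing against a stage difference, then use the lower-triangular stage equations to trade $\dt_n\langle\scrF_{n,i},\cdot\rangle$ for $\polH$-inner products of stage differences, collecting the diagonal contributions into the $\nu_i$ of \eqref{eq:NuSingleIndexThreeStage} and the remainder into $\mathcal{Q}$. The only cosmetic deviation is that you describe the second-stage substitution in terms of $(v_{n,2}-v_{n,1})$ and $(v_{n,1}-u_n)$, whereas the paper (and \eqref{Qform33}) writes it with $(v_{n,2}-u_n)$ and $(v_{n,1}-u_n)$; these are the same identity after regrouping, so the resulting quadratic form is unchanged.
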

\begin{proof}
We follow the ideas that led to \eqref{GeneralEnergyBal} in the case of two
stage schemes. We exploit the fact that the matrix $\bf{A}$ is lower triangular,
together with the bilinearity of the duality pairing to get, using the equations
of the stages,
\begin{align*}
  \tau_n\nu_{12} \langle \scrF_{n,1}, v_{n,2} \rangle &=
    \tau_n\nu_{12} \langle \scrF_{n,1}, v_{n,1} \rangle + \tau_n\nu_{12} \langle
\scrF_{n,1}, v_{n,2} - v_{n,1} \rangle \\
    &= \tau_n\nu_{12} \langle \scrF_{n,1}, v_{n,1} \rangle +
\frac{\nu_{12}}{a_{11}}(v_{n,1}-u_n,v_{n,2}-v_{n,1}), \\
  \tau_n\nu_{13} \langle \scrF_{n,1}, v_{n,3} \rangle &=
    \tau_n\nu_{13} \langle \scrF_{n,1}, v_{n,1} \rangle + \tau_n\nu_{13} \langle
\scrF_{n,1}, v_{n,3} - v_{n,1} \rangle \\
    &= \tau_n\nu_{13} \langle \scrF_{n,1}, v_{n,1} \rangle +
\frac{\nu_{13}}{a_{11}}(v_{n,1}-u_n,v_{n,3}-v_{n,1}), \\
  \tau_n \nu_{23} \langle\scrF_{n,2}, v_{n,3} \rangle &=
    \tau_n \nu_{23} \langle\scrF_{n,2}, v_{n,2} \rangle + \tau_n
\nu_{23}\langle\scrF_{n,2}, v_{n,3} - v_{n,2}\rangle \\
    &= \tau_n \nu_{23} \langle\scrF_{n,2}, v_{n,2} \rangle +
\frac{\nu_{23}}{a_{22}} (v_{n,2} - u_n, v_{n,3} - v_{n,2}) -
\frac{a_{21}\nu_{23}}{a_{11}a_{22}} (v_{n,1} - u_n, v_{n,3}-v_{n,2}).
\end{align*}

We use the previous identities in order to rewrite the energy identity
\eqref{Dirk33GeneralEnergyID} as follows
\begin{align}
\begin{split}\label{EnergyID33polyAlmostPoly}
 \frac12 | u_{n+1} |^2  &+ \sum_{i=1}^3 \left( \delta_i | v_{n,i} - v_{n,i-1}
|^2 - \tau_n \left(\nu_{ii}+\sum_{j=i+1}^3 \nu_{ij} \right) \langle \scrF_{n,i},
v_{n,i} \rangle \right) = \frac12 |u_n |^2\\
& + \frac{\nu_{12}}{a_{11}} (v_{n,1}-u_n,v_{n,2}-v_{n,1}) +
\frac{\nu_{13}}{a_{11}} (v_{n,1}-u_n,v_{n,3}-v_{n,1}) \\
& + \frac{\nu_{23}}{a_{22}} (v_{n,2} - u_n, v_{n,3} - v_{n,2} ) -
\frac{a_{21}\nu_{23}}{a_{11}a_{22}} (v_{n,1} - u_n,v_{n,3}-v_{n,2}) .
\end{split}
\end{align}
Finally, identity \eqref{EnergyID33poly} follows by reorganizing the terms
in \eqref{EnergyID33polyAlmostPoly}.
\end{proof}

Once again, the practical value of identity \eqref{EnergyID33poly} rests on
the quadratic form $\mathcal{Q}$, and whether or not it is nonnegative definite. As in the
case of two-stage schemes, we introduce the notion of remarkably stable
three-stage schemes.

\begin{definition}[remarkable stability]\label{def:RemarkableThreeStage}
We will say that the DIRK scheme \eqref{eq:TheDIRK} with $s=3$, $\bv{A}$ lower
triangular, and with positive diagonal entries is
remarkably stable if
\[
\delta_1 \geq 0 , \quad
\delta_2\geq 0 , \quad
\delta_3 \geq 0, \quad
\nu_1 > 0 , \quad
\nu_2 > 0 , \quad
\nu_3 > 0 ,
\]
where these coefficients were defined in \eqref{EnergyCoeff33} and
\eqref{eq:NuSingleIndexThreeStage}, and, in addition, the quadratic form
$\mathcal{Q}$, defined in \eqref{Qform33}, is nonnegative definite.
\end{definition}

\begin{remark}[nonnegativity] Since $\mathcal Q$, as defined in
\eqref{Qform33}, can always be expanded in terms of its monomial coefficients,
the nonnegativity of the quadratic form $\mathcal Q$ can be verified by
examining the eigenvalues of the corresponding coefficient matrix
$\bv{Q}=[q_{ij}]_{i,j=1}^4$, which read
\begin{align*}
  q_{11} &= \delta_1 , &
  q_{12} = q_{21} &= -\delta_1 - \frac{\nu_{12}}{a_{11}} -
\frac{\nu_{13}}{a_{11}}, \\
  q_{13} = q_{31} &= \frac{\nu_{12}}{a_{11}} - \frac{\nu_{23}}{a_{22}} +
\frac{a_{21}\nu_{23}}{a_{11} a_{22}}, &
  q_{14} = q_{41} &= \frac{\nu_{13}}{a_{11}} + \frac{\nu_{23}}{a_{22}} -
\frac{a_{21}\nu_{23}}{a_{11} a_{22}}, \\
  q_{22} &= \delta_1 + \delta_2 + \frac{2\nu_{12}}{a_{11}} +
\frac{2\nu_{13}}{a_{11}}, &
  q_{23} = q_{32} &= - \delta_2 - \frac{\nu_{12}}{a_{11}} - \frac{a_{21}
\nu_{23}}{a_{11} a_{22}}, \\
  q_{24} = q_{42} &= -\frac{\nu_{13}}{a_{11}} + \frac{a_{21} \nu_{23}}{a_{11}
a_{22}}, &
  q_{33} &= \delta_2 + \delta_3 + \frac{2\nu_{23}}{a_{22}}, \\
  q_{34} = q_{43} &= - \delta_3 - \frac{\nu_{23}}{a_{22}}, &
  q_{44} &= \delta_2.
\end{align*}
\end{remark}

\subsection{Examples of three-stage remarkably stable
schemes}\label{sub:ExampleThreeStageRemarkable}
Let us now investigate the schemes of Section~\ref{subsub:ThreeStageList}
for remarkable stability.

\subsubsection{Crouzeix-Raviart DIRK34 scheme}

We consider the scheme described by tableau \eqref{CrouRavTableau} with
$\gamma_1$ as defined in \eqref{GammasCroRav}, also known as the
Crouzeix-Raviart scheme. In this case we have that (with 20 digits of accuracy
for the sake of computational utility):
\begin{align*}
[\lambda_1, \lambda_2, \lambda_3] &\approx
[0.44562240728771388189, 1.0641777724759121408, 0.12061475842818323189] \\
[\delta_1, \delta_2, \delta_3] &\approx
[0.30128850285230865863, 0.48292586026102947830, 0.11334079845283873290] \\
[\nu_{11}, \nu_{22}, \nu_{33}] &\approx
[0.94409386961162504966, 1.2422271989685591552, 0.12888640051572042236] \\
[\nu_{12}, \nu_{13}, \nu_{23}] &\approx
[- 1.1863210685801842049, 0.37111359948427957763, -0.5]
\end{align*}
which leads to:
\begin{align*}
\nu_1 \approx 0.12888, \quad
\nu_2 \approx 0.74222, \quad
\nu_3 \approx 0.12888 .
\end{align*}

Up to 20 digits of accuracy we find that the eigenvalues of the matrix
$\bv{Q}$ are approximately
\[
  \{0.564309,0, 0, 0\},
\]
hinting at the fact that $\mathcal Q$ is nonnegative definite. A reduction to
row echelon form of this matrix gives us that it has exactly three rows that
consist only of zeros. This means this matrix has three zero eigenvalues, and
the scheme is remarkably stable.

To conclude with this example, we mention that neither the case of $\gamma_2$
nor $\gamma_3$, defined in \eqref{GammasCroRav}, lead to remarkably stable
schemes. In particular, they lead to $\delta_1, \delta_2, \delta_3 < 0$.

\subsubsection{Alexander's DIRK33 scheme}\label{SubAlex33}

We consider the scheme described by tableau \eqref{DIRK33Lstable}. In this
context we have:
\begin{align*}
[\lambda_1, \lambda_2, \lambda_3] &\approx
[0, 0, 1] \\
[\delta_1, \delta_2, \delta_3] &\approx
[0.5, 0.5, 0.5] \\
[\nu_{11}, \nu_{22}, \nu_{33}] &\approx
[0.435866, 0.435866, 0.435866] \\
[\nu_{12}, \nu_{13}, \nu_{23}] &\approx
[- 0.153799, 0.926429, -1.080229]
\end{align*}
leading to
\begin{align*}
\nu_1 \approx 1.2084966 \ , \
\nu_2 \approx -0.644363 \ \text{and} \
\nu_3 \approx 0.43586652 \, .
\end{align*}
We conclude that, regardless of the spectrum of $\bv{Q}$, Alexander's
DIRK33 scheme is \textbf{not} remarkably stable. Just like Alexander's
DIRK22 scheme, defined in tableau \eqref{AlexDIRK22tableu} and considered in
section~\ref{RemarkAlex22}, Alexander's DIRK33 scheme is not unconditionally stable for skew-symmetric problems. We delve into these details in Appendix \ref{AlexanderAppendix}.

\section{Bochner-type norm estimates}
\label{sec:AisCoercive}

In this section we introduce a more stringent set of assumptions on the
mapping $\calA$. These will be invoked if we wish to prove a priori bounds in
the Bochner-type norm $L^p(0,\tf;\polV)$, for some $p>1$, on the solution of
\eqref{eq:TheODE}. These assumptions, in addition, will allow us to obtain a
priori bounds for the time derivative of the solution in the Bochner-type norm
$L^{q'}(0,\tf;\mathbb{V}^*)$ for some $q>1$. These two estimates are enough to
establish compactness of the family of approximate solutions via the
well-celebrated Aubin-Lions compactness lemma.

\begin{lemma}[Aubin-Lions]\label{AubinLionsLemma}
Let $\mathbb{X}$, $\mathbb{Y}$ and $\mathbb{Z}$ be three Banach spaces such
that $\mathbb{X} \subseteq \mathbb{Y} \subseteq \mathbb{Z}$. Assume that
$\mathbb{X}$ is compactly embedded in $\mathbb{Y}$, and $\mathbb{Y}$ is
continuously embedded in
$\mathbb{Z}$. Then, for $1 \leq p,q \leq +\infty$ we define
\begin{align*}
  \mathbb{U} = \left\{ u \in L^p(0,\tf;\mathbb{X}) \ \middle| \ \frac{\diff
u}{\diff t} \in L^{q'}(0,\tf;\mathbb{Z}) \right\}.
\end{align*}
Then
\begin{enumerate}[$\bullet$]
  \item If $p < +\infty$ the embedding of
$\mathbb{U}$ into $L^p(0,\tf;\mathbb{Y})$ is compact.

  \item If $p = +\infty$ and $q <\infty$ the embedding of $\mathbb{U}$ into
$\mathcal{C}([0,\tf];\mathbb{Y})$ is compact.
\end{enumerate}
\end{lemma}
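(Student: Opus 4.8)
The plan is to follow the classical route via Ehrling's interpolation inequality combined with a characterization of relative compactness in Bochner spaces, arguing directly on a bounded sequence. Throughout, let $\{u_m\} \subset \mathbb{U}$ be bounded, say $\|u_m\|_{L^p(0,\tf;\mathbb{X})} + \|\tfrac{\diff u_m}{\diff t}\|_{L^{q'}(0,\tf;\mathbb{Z})} \leq M$; the goal is to extract a subsequence that converges in the target space. First I would record \emph{Ehrling's lemma}: since $\mathbb{X}$ embeds compactly into $\mathbb{Y}$ and $\mathbb{Y}$ embeds continuously into $\mathbb{Z}$, for every $\eta > 0$ there is $C_\eta > 0$ with $\|v\|_{\mathbb{Y}} \leq \eta \|v\|_{\mathbb{X}} + C_\eta \|v\|_{\mathbb{Z}}$ for all $v \in \mathbb{X}$ (proved by contradiction using the compactness of the embedding). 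Applying this pointwise in time and integrating, every difference satisfies $\|u_m - u_k\|_{L^p(0,\tf;\mathbb{Y})} \leq \eta \|u_m-u_k\|_{L^p(0,\tf;\mathbb{X})} + C_\eta \|u_m-u_k\|_{L^p(0,\tf;\mathbb{Z})}$, whose first term is at most $2\eta M$. Thus the problem reduces to extracting a subsequence that is Cauchy in the weaker space $L^p(0,\tf;\mathbb{Z})$ (respectively $\mathcal{C}([0,\tf];\mathbb{Z})$).

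Second, I would convert the derivative bound into time-regularity through the fundamental theorem of calculus: for $0 \leq s \leq t \leq \tf$ one has $u_m(t) - u_m(s) = \int_s^t \tfrac{\diff u_m}{\diff r}\,\diff r$, so by H\"older $\|u_m(t)-u_m(s)\|_{\mathbb{Z}} \leq \|\tfrac{\diff u_m}{\diff r}\|_{L^{q'}(s,t;\mathbb{Z})}\, |t-s|^{1/q} \leq M\, |t-s|^{1/q}$. This produces a uniform modulus of continuity in the $\mathbb{Z}$-norm and, integrating, a uniform control of time-translations $\|u_m(\cdot + h) - u_m\|_{L^p(0,\tf-h;\mathbb{Z})} \to 0$ as $h \to 0$, independently of $m$.

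For $p < \infty$ I would then invoke the vector-valued Riesz--Fr\'echet--Kolmogorov criterion (Simon): relative compactness of $\{u_m\}$ in $L^p(0,\tf;\mathbb{Z})$ follows from the uniform translation estimate above together with relative compactness in $\mathbb{Z}$ of the integrals $\int_{t_1}^{t_2} u_m(r)\,\diff r$ over subintervals. The latter holds because these integrals are bounded in $\mathbb{X}$ (by H\"older applied to the $L^p(0,\tf;\mathbb{X})$ bound) and $\mathbb{X}$ embeds compactly into $\mathbb{Y} \hookrightarrow \mathbb{Z}$. Extracting a subsequence Cauchy in $L^p(0,\tf;\mathbb{Z})$ and inserting it into the Ehrling estimate gives $\limsup_{m,k}\|u_m-u_k\|_{L^p(0,\tf;\mathbb{Y})} \leq 2\eta M$; since $\eta$ is arbitrary, the subsequence is Cauchy in $L^p(0,\tf;\mathbb{Y})$, proving the first assertion. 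For $p = \infty$ and $q < \infty$ I would instead run Arzel\`a--Ascoli directly in $\mathcal{C}([0,\tf];\mathbb{Y})$: the H\"older estimate $\|u_m(t)-u_m(s)\|_{\mathbb{Z}} \leq M|t-s|^{1/q}$ gives equicontinuity in $\mathbb{Z}$, which Ehrling upgrades to uniform equicontinuity in $\mathbb{Y}$ (the $\mathbb{X}$-term now controlled by the uniform $L^\infty$ bound), while pointwise precompactness of $\{u_m(t)\}$ in $\mathbb{Y}$ follows from its boundedness in $\mathbb{X}$ and the compact embedding.

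The main obstacle I anticipate is making the compactness-of-integrals and time-translation arguments fully rigorous in the Bochner setting: verifying the hypotheses of the Riesz--Fr\'echet--Kolmogorov criterion and tracking the exponents $p$ and $q'$ so that the translation term genuinely vanishes, together with the measurability and everywhere-versus-almost-everywhere subtleties needed to ensure $u_m \in \mathcal{C}([0,\tf];\mathbb{Z})$ with pointwise $\mathbb{X}$-bounds in the case $p = \infty$.
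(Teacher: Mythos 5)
You should first be aware that the paper does not prove Lemma~\ref{AubinLionsLemma} at all: it is quoted as a classical result, and the remark that follows it simply points to \cite{Aubin1963,Lions1969} and the later refinements in \cite{Simon1987,Andrei2011,Jungel2012,Gllou2012,Jungel2014,Elliott2021}. So there is no internal argument to compare against; what your proposal reconstructs is essentially the standard proof of \cite{Simon1987}: Ehrling's inequality to trade compactness in $L^p(0,\tf;\mathbb{Y})$ for compactness in the weaker space $L^p(0,\tf;\mathbb{Z})$, translation estimates coming from the derivative bound, the vector-valued Riesz--Fr\'echet--Kolmogorov criterion when $p<\infty$, and Arzel\`a--Ascoli when $p=\infty$, $q<\infty$. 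The Ehrling reduction, the relative compactness in $\mathbb{Z}$ of the averages $\int_{t_1}^{t_2}u_m(r)\,\diff r$ (bounded in $\mathbb{X}$, hence precompact in $\mathbb{Y}\hookrightarrow\mathbb{Z}$), the final $\limsup$ argument, and the equicontinuity upgrade for $p=\infty$ are all correct as written; the everywhere-versus-a.e.\ issues you flag are real but routine (work with the absolutely continuous $\mathbb{Z}$-valued representative, and note that for fixed $t$ the values $u_m(t)$ are $\mathbb{Y}$-limits of values at a.e.\ points, hence lie in a fixed compact subset of $\mathbb{Y}$).

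The one genuine gap is the edge case $q=\infty$, i.e.\ $q'=1$, which the first bullet of the statement does allow. There your modulus of continuity $\|u_m(t)-u_m(s)\|_{\mathbb{Z}}\le M|t-s|^{1/q}$ is vacuous (the exponent is zero), so integrating it yields no decay of the time-translations and the Riesz--Fr\'echet--Kolmogorov hypothesis is never verified. The repair does not go through the pointwise bound at all: set $g_m(r)=\bigl\|\tfrac{\diff u_m}{\diff r}(r)\bigr\|_{\mathbb{Z}}$ and $G_m(t)=\int_t^{t+h}g_m(r)\,\diff r$, so that $\|u_m(t+h)-u_m(t)\|_{\mathbb{Z}}\le G_m(t)$, $\sup_t G_m(t)\le \|g_m\|_{L^1(0,\tf)}\le M$, and, by Fubini, $\int_0^{\tf-h}G_m(t)\,\diff t\le h\,\|g_m\|_{L^1(0,\tf)}\le hM$. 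Consequently
\[
  \int_0^{\tf-h}\|u_m(t+h)-u_m(t)\|_{\mathbb{Z}}^p\,\diff t
  \le \int_0^{\tf-h}G_m(t)^p\,\diff t
  \le \Bigl(\sup_{t}G_m(t)\Bigr)^{p-1}\int_0^{\tf-h}G_m(t)\,\diff t
  \le h\,M^p,
\]
so the translations are $O(h^{1/p})$ in $L^p(0,\tf-h;\mathbb{Z})$, uniformly in $m$, with no reference to $q$. Since $L^{q'}(0,\tf;\mathbb{Z})\hookrightarrow L^1(0,\tf;\mathbb{Z})$, this single estimate also covers every $q<\infty$, so you may simply replace your translation step by it and the first bullet is proved in full generality. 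Note that no such repair is possible for the second bullet: with $q'=1$ equicontinuity genuinely fails (a smoothed step $u_m(t)=v\,\varphi(m(t-\tfrac{\tf}{2}))$ is bounded in $L^\infty(0,\tf;\mathbb{X})$ with derivative bounded in $L^1(0,\tf;\mathbb{Z})$, yet has no uniformly convergent subsequence), which is exactly why the statement, and your argument, require $q<\infty$ there.
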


\begin{remark}[references]
The origins of Lemma~\ref{AubinLionsLemma} go back to
\cite{Aubin1963,Lions1969}.
This result has been extended, improved, and reviewed several times; see
\cite{Simon1987,Andrei2011,Jungel2012,Gllou2012,Jungel2014,Elliott2021} and
references therein. In particular, we note that in practice it is very difficult
to obtain estimates for the discrete time derivative. Therefore, there have been
major efforts in order to replace bounds on the derivative by some form of
equicontinuity, or uniform modulus continuity in Bochner-type norms.
\end{remark}

Let us now state the additional set of assumptions we shall impose on $\calA$.
 These are lower bounds and growth conditions for $\langle
\mathcal{A}(w),w\rangle$. More precisely, we will assume that:
\begin{enumerate}[$\bullet$]
  \item $p$--coercivity: There exist $p>1$ and $C_1 >0$ such
that\footnote{This can be generalized, with $C_2>0$, to the case
  \[
    \langle \mathcal{A}(w),w \rangle \geq C_1 \| w \|^p - C_2 | w |^2,
  \]
  but this will inevitably lead to conditional stability in our schemes, or to
the need of Gr\"onwall inequalities.}
  \begin{equation}\label{pcoercivity}
    \langle \mathcal{A}(w),w \rangle \geq C_1 \| w \|^p, \quad \forall w \in
\mathbb{V}.
  \end{equation}

  \item $q$--growth: There exist $q \geq p$ and an increasing function
$C_3: \mathbb{R} \to \mathbb{R}$ for which\footnote{This can also be generalized
to have lower order terms.}
  \begin{equation}\label{rgrowth}
    \|\mathcal{A}(w)\|_{*} \leq C_3(|w|) \|w\|^{p/q'}, \quad \forall w \in
\mathbb{V}.
  \end{equation}
\end{enumerate}

We highlight that conditions \eqref{pcoercivity} and \eqref{rgrowth} indeed do
appear in a large class of problems of mathematical and technical interest; see
Appendix \ref{AppExampleProb}.

Notice that our assumptions allow us to obtain a priori estimates on the
solution and its derivative.

\begin{proposition}[a priori estimates]
Assume that $f \in L^{p'}(0,\tf;\mathbb{V}^*)$. If $\calA$ satisfies
\eqref{pcoercivity}, then we have that the solution to \eqref{eq:TheODE}
satisfies
\begin{equation}
\label{eq:ProbIsBochnerStable}
  \frac12 \|u \|_{L^\infty(0,\tf;\polH)}^2 + \frac{C_1}{p'}\| u
\|_{L^p(0,\tf;\polV)}^p \leq \frac12 |u_0 |^2 + \frac1{p'C_1^{1/(p-1)}} \| f
\|_{L^{p'}(0,\tf;\polV')}^{p'}.
\end{equation}
If, in addition, $\calA$ satisfies \eqref{rgrowth} we also have
\begin{equation}
\label{eq:DerivEstimateNegativeNorm}
  \left\| \frac{\diff u}{\diff t} \right\|_{L^{q'}(0,\tf;\polV^*)}^{q'} \lesssim
|u_0|^2 + \| f \|_{L^{p'}(0,\tf;\polV')}^{p'}.
\end{equation}
\end{proposition}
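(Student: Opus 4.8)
The plan is to obtain both bounds by the classical energy method: test \eqref{eq:TheODE} against the solution itself and exploit the structural assumptions \eqref{pcoercivity}--\eqref{rgrowth} together with a sharply weighted Young inequality.

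For \eqref{eq:ProbIsBochnerStable} I would pair \eqref{eq:TheODE} with $u(t)\in\polV$ in the duality $\langle\cdot,\cdot\rangle$ and use the Gelfand-triple chain rule (the Lions--Magenes lemma), which gives $\langle\tfrac{\diff u}{\diff t},u\rangle=\tfrac12\tfrac{\diff}{\diff t}|u|^2$, to arrive at
\begin{equation*}
  \tfrac12\tfrac{\diff}{\diff t}|u|^2 + \langle\calA(u),u\rangle = \langle f,u\rangle .
\end{equation*}
On the left I invoke the $p$-coercivity \eqref{pcoercivity}, and on the right I bound $\langle f,u\rangle\le\|f\|_*\|u\|$. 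The key algebraic step is Young's inequality with exponents $p,p'$ and weight $C_1^{1/p}$, namely $\|f\|_*\|u\|\le\tfrac{C_1}{p}\|u\|^p+\tfrac{1}{p'C_1^{1/(p-1)}}\|f\|_*^{p'}$, which absorbs the fraction $\tfrac{C_1}{p}\|u\|^p$ of the coercive term and leaves exactly $\tfrac{C_1}{p'}\|u\|^p$ on the left. Integrating the resulting differential inequality from $0$ to $t$ yields, for every $t\in[0,\tf]$, the pointwise bound $\tfrac12|u(t)|^2+\tfrac{C_1}{p'}\int_0^t\|u\|^p\,\diff s\le\tfrac12|u_0|^2+\tfrac{1}{p'C_1^{1/(p-1)}}\|f\|_{L^{p'}(0,\tf;\polV^*)}^{p'}$; taking the supremum over $t$ for the $\polH$-term and $t=\tf$ for the $\polV$-integral gives \eqref{eq:ProbIsBochnerStable}.

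For \eqref{eq:DerivEstimateNegativeNorm} I would read \eqref{eq:TheODE} as the identity $\tfrac{\diff u}{\diff t}=f-\calA(u)$ in $\polV^*$ and use the triangle inequality $\|\tfrac{\diff u}{\diff t}\|_*\le\|f\|_*+\|\calA(u)\|_*$. The $q$-growth \eqref{rgrowth} gives $\|\calA(u)\|_*\le C_3(|u|)\|u\|^{p/q'}$; since $C_3$ is increasing and, by the first estimate, $|u(t)|\le\|u\|_{L^\infty(0,\tf;\polH)}=:R$, I replace $C_3(|u(t)|)$ by the data-dependent constant $C_3(R)$. Raising to the power $q'$ and integrating over $[0,\tf]$, the $\calA$-contribution becomes $C_3(R)^{q'}\int_0^\tf\|u\|^p\,\diff s=C_3(R)^{q'}\|u\|_{L^p(0,\tf;\polV)}^p$, which is controlled by \eqref{eq:ProbIsBochnerStable}. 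Because $q\ge p$ forces $q'\le p'$, on the finite interval $[0,\tf]$ a H\"older estimate bounds $\|f\|_{L^{q'}(0,\tf;\polV^*)}$ by $\|f\|_{L^{p'}(0,\tf;\polV^*)}$, and an elementary power inequality converts the exponent $q'$ into $p'$; collecting terms yields \eqref{eq:DerivEstimateNegativeNorm}, with the nonessential constant in $\lesssim$ absorbing $\tf$, $C_1$, $p$, $q$ and $C_3(R)$.

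The main obstacle is not the algebra but the rigorous justification of the chain rule $\tfrac{\diff}{\diff t}\tfrac12|u|^2=\langle\tfrac{\diff u}{\diff t},u\rangle$ used in the first estimate: this identity is valid in the Gelfand triple only once one knows $u\in L^p(0,\tf;\polV)$ with $\tfrac{\diff u}{\diff t}\in L^{p'}(0,\tf;\polV^*)$, which is precisely the regularity we are trying to quantify (and note that the $q$-growth only delivers $\tfrac{\diff u}{\diff t}\in L^{q'}$, weaker than $L^{p'}$ when $q>p$). The clean way around this circularity is to regard the computation as a formal a priori estimate made rigorous on a Galerkin or time-discrete approximation and then passed to the limit, or to appeal directly to the regularity supplied by the assumed well-posedness of \eqref{eq:TheODE}. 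A secondary, purely technical, point is tracking the data dependence of the constant $C_3(R)$ and the $L^{q'}$-to-$L^{p'}$ conversion for $f$, both of which are harmlessly hidden in the symbol $\lesssim$.
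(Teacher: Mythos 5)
Your proposal is correct and follows essentially the same route as the paper: test the equation with $u$, apply $p$-coercivity and a Young inequality (your explicit weight $C_1^{1/p}$ is exactly the paper's ``suitable choice of $\varepsilon$''), integrate, and then bound $\|\tfrac{\diff u}{\diff t}\|_*$ via the triangle inequality, the $q$-growth condition, the uniform bound on $C_3(|u(t)|)$, and the embedding $L^{p'}\hookrightarrow L^{q'}$. Your closing remark on rigorously justifying the chain rule $\tfrac{\diff}{\diff t}\tfrac12|u|^2=\langle\tfrac{\diff u}{\diff t},u\rangle$ is a point the paper passes over silently, but it does not change the substance of the argument.
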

\begin{proof}
We recall that, in this setting, equation \eqref{eq:TheODE} must be understood
in $\polV^*$ for \emph{a.e.}~$t \in(0,T)$. We apply said functional to $u(t)$
and obtain
\[
  \frac12 \frac{\diff}{\diff t} | u (t) |^2 + C_1 \| u (t) \|^p \leq
\left\langle \frac{\diff u(t)}{\diff t},u(t) \right\rangle + \langle
\mathcal{A}(u(t)),u(t) \rangle = \langle f(t), u(t) \rangle \leq
\frac1{p'\varepsilon^{p'}} \| f(t) \|_{*}^{p'} + \frac{\varepsilon^p}p \| u(t)
\|^p,
\]
where the lower bound is obtained using the coercivity condition
\eqref{pcoercivity},
and the upper bound is obtained using Young's inequality. Integrating we can
conclude that
\[
  \frac12 \| u \|_{L^\infty(0,\tf;\mathbb{H})}^2 + \left(C_1 -
\frac{\varepsilon^p}p \right) \| u \|_{L^p(0,\tf;\mathbb{V})}^p \leq \frac12
|u_0|^2 + \frac1{p'\varepsilon^{p'}} \| f \|_{L^{p'}(0,\tf;\mathbb{V}^*)}^{p'}.
\]
A suitable choice of $\varepsilon>0$ then shows \eqref{eq:ProbIsBochnerStable}.

On the other hand, by definition, we have
\begin{align*}
  \left\| \frac{\diff u(t)}{\diff t} \right\|_* &= \sup_{0 \neq w \in
\mathbb{V}} \frac{ \langle \tfrac{\diff u(t)}{\diff t},w\rangle}{ \| w \|} =
\sup_{0 \neq w \in \mathbb{V}} \frac{ \langle f(t) -
\mathcal{A}(u(t)),w\rangle}{ \| w \|} \\
  &\leq \| f(t) \|_* + \| \mathcal{A}(u(t)) \|_* \leq \| f(t) \|_* + C_3(|u(t)|)
\| u(t) \|^{p/q'}.
\end{align*}
where in the last step we used the $q$--growth condition. Notice now that,
the uniform $L^\infty(0,\tf;\polH)$ estimate on $u$, and the fact that $C_3$ is
increasing, imply that
\[
  \sup_{t\in[0,\tf]} C_3(|u(t)|) \leq \bar{C}_3 < \infty.
\]
Thus, raising to power $q'$ the previous estimate and integrating we get
\[
  \left\| \frac{\diff u}{\diff t} \right\|_{L^{q'}(0,\tf;\mathbb{V}^*)}^{q'}
\lesssim \| f \|_{L^{q'}(0,\tf;\mathbb{V}^*)}^{q'} + \| u(t)
\|_{L^p(0,\tf;\polV)}^p.
\]
Since $q'\leq p'$ we have that $L^{p'} \hookrightarrow L^{q'}$, and the desired
derivative estimate follows from \eqref{eq:ProbIsBochnerStable}.
\end{proof}

Let us now turn to DIRK schemes. In light of the a priori estimates presented
above we now introduce our strongest notion of stability, namely, discrete
Bochner stability.

\begin{definition}[discrete Bochner stability]\label{def:discBochnerStable}
We will say that the $s$-stage DIRK scheme with tableau \eqref{eq:TheDIRK} is
Bochner stable when applied to the problem \eqref{eq:TheODE} if there are
strictly positive $C_A,C_f$, $\{\omega_i\}_{i=1}^s$, $\{\gamma_i\}_{i=1}^s$,
$p>1$, and $q \geq p$ such that, for all $\frakN \in \polN$, any partition
$\calP_\frakN$, and all $n \in \{0, \ldots, \frakN -1 \}$, we have
\begin{equation}\label{eq:discrBochnerStable}
  \frac12 |u_{n+1}|^2 + C_A \dt_n \sum_{i=1}^s \omega_i \| v_{n,i} \|^p \leq
\frac12 |u_n|^2 + C_f \dt_n \sum_{i=1}^s \gamma_i \| f_{n,i} \|_*^{q'}.
\end{equation}
\end{definition}

\begin{remark}[notation]
Notice that, since $\gamma_i >0$ for all $i = 1, \ldots, s$, the linear
functional
\[
  \varphi \mapsto \frac1{\sum_{i=1}^s \gamma_i} \sum_{i=1}^s \gamma_i
\varphi(c_i), \qquad \forall \varphi \in \mathcal{C}([0,1])
\]
defines a quadrature formula that is exact for at least constant functions.
For this reason, and to alleviate notation, we shall define, for $r \in
(1,\infty)$,
\[
  \| f \|_{L_{\mathcal{P}_N}^r(0,\tf;\polV^*)} = \left( \sum_{n=1}^\frakN \dt_n
\sum_{i=1}^s \gamma_i \| f(t_n + c_i \tau_n) \|_*^{r}
  \right)^{1/r}.
\]
\end{remark}

It turns out that satisfaction of a dissipative discrete energy-balance such as
 \eqref{AbstractDiscBal} is a rather strong foundation for stability in the
context of practical applications. In fact, under our additional assumptions on
$\calA$, Definition~\ref{def:DiscrDissEnergyBalance} implies
\eqref{eq:discrBochnerStable}.

\begin{proposition}[discrete Bochner stability]\label{prop:DiscBochnerIsImplied}
Assume that the mapping $\calA$ satisfies the coercivity assumption
\eqref{pcoercivity}. If a DIRK scheme satisfies the dissipative discrete energy
balance \eqref{AbstractDiscBal}, then it is discretely Bochner stable in the
sense of Definition~\ref{def:discBochnerStable}.
\end{proposition}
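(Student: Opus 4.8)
The plan is to mirror, at the discrete level, the a priori estimate already established in the continuous setting. I would begin with the dissipative discrete energy-balance \eqref{AbstractDiscBal} and split the slope using $\scrF_{n,i} = f_{n,i} - \calA(v_{n,i})$, so that each stage term becomes $\langle \scrF_{n,i}, v_{n,i}\rangle = \langle f_{n,i}, v_{n,i}\rangle - \langle \calA(v_{n,i}), v_{n,i}\rangle$. Moving the dissipative contribution to the left-hand side then yields
\[
\frac12 |u_{n+1}|^2 + \mathcal{Q}(u_n, v_{n,1}, \ldots, v_{n,s}) + \dt_n \sum_{i=1}^s \nu_i \langle \calA(v_{n,i}), v_{n,i}\rangle = \frac12 |u_n|^2 + \dt_n \sum_{i=1}^s \nu_i \langle f_{n,i}, v_{n,i}\rangle.
\]

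Next I would invoke the two structural ingredients at hand. Since $\mathcal{Q}$ is nonnegative definite by Definition~\ref{def:DiscrDissEnergyBalance}, it can be dropped from the left-hand side. Applying the $p$-coercivity \eqref{pcoercivity} stage by stage, namely $\langle \calA(v_{n,i}), v_{n,i}\rangle \geq C_1 \|v_{n,i}\|^p$, and using that each $\nu_i > 0$, this gives
\[
\frac12 |u_{n+1}|^2 + C_1 \dt_n \sum_{i=1}^s \nu_i \|v_{n,i}\|^p \leq \frac12 |u_n|^2 + \dt_n \sum_{i=1}^s \nu_i \langle f_{n,i}, v_{n,i}\rangle.
\]

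To control the right-hand side I would estimate $\langle f_{n,i}, v_{n,i}\rangle \leq \|f_{n,i}\|_* \|v_{n,i}\|$ and apply Young's inequality with the conjugate pair $(p,p')$: for any $\varepsilon > 0$,
\[
\|f_{n,i}\|_* \|v_{n,i}\| \leq \frac{\varepsilon^p}{p} \|v_{n,i}\|^p + \frac{1}{p' \varepsilon^{p'}} \|f_{n,i}\|_*^{p'}.
\]
Substituting this and choosing $\varepsilon$ small enough that $C_1 - \varepsilon^p/p > 0$ (for instance $\varepsilon^p = C_1 p/2$, so that the residual coefficient equals $C_1/2$) absorbs the $\|v_{n,i}\|^p$ contribution from Young's inequality into the coercive term. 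This produces exactly \eqref{eq:discrBochnerStable} upon setting $q = p$, $\omega_i = \gamma_i = \nu_i$, $C_A = C_1/2$, and $C_f = 1/(p'\varepsilon^{p'})$.

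The computation is routine; the only genuine decision is the choice $q = p$, which is admissible since Definition~\ref{def:discBochnerStable} only requires $q \geq p$ and since here only coercivity---and not the growth condition \eqref{rgrowth}---is in play, forcing the dual exponent on $f$ to be $p'$. The sole quantitative care needed is to keep the coefficient of $\sum_i \nu_i \|v_{n,i}\|^p$ strictly positive after the Young absorption, which is guaranteed by the choice of $\varepsilon$.
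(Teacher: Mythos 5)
Your proof is correct and follows essentially the same route as the paper's own proof: rewrite \eqref{AbstractDiscBal} by splitting $\scrF_{n,i}=f_{n,i}-\calA(v_{n,i})$, drop the nonnegative form $\mathcal{Q}$, apply $p$-coercivity stagewise, bound $\langle f_{n,i}, v_{n,i}\rangle \leq \|f_{n,i}\|_*\|v_{n,i}\|$, and absorb via Young's inequality. If anything, your explicit choice $q=p$ with $\omega_i=\gamma_i=\nu_i$ (admissible since Definition~\ref{def:discBochnerStable} only requires existence of some $q\geq p$) spells out the exponent bookkeeping that the paper's one-line proof leaves implicit.
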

\begin{proof}
We note that \eqref{AbstractDiscBal} can be rewritten as
\[
  \frac12 |u_{n+1}|^2 + \dt_n \sum_{i=1}^s
\nu_i \langle \calA(v_{n,i}), v_{n,i} \rangle
\leq
\frac12 |u_n|^2 + \dt_n
\sum_{i=1}^s \nu_i \| f_{n,i} \|_* \|v_{n,i}\|.
\]
Using the $p$--coercivity condition on $\calA$ the left hand side of this
inequality can be bounded from below. An application of Young's inequality on
the right hand side, and the fact that $q'\geq p'$, then leads to
\eqref{eq:discrBochnerStable}.
\end{proof}

The following result is further evidence of the relevance of remarkably stable
schemes.

\begin{corollary}[remarkable stability]
If a DIRK scheme is remarkably stable in the sense of
Definitions~\ref{RemarkableDefTwoStage} or \ref{def:RemarkableThreeStage}, then
it is discretely Bochner stable in the sense of
Definition~\ref{def:discBochnerStable}.
\end{corollary}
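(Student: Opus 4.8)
The plan is to prove this corollary by composing two facts that have already been established, so that essentially no new estimation is required. First, remarkable stability is nothing more than the hypotheses of the dissipative discrete energy-balance (Definition~\ref{def:DiscrDissEnergyBalance}) read off the energy identities already derived in Sections~\ref{sec:TwoStage} and \ref{sec:ThreeStage}; second, Proposition~\ref{prop:DiscBochnerIsImplied} upgrades any such dissipative balance to discrete Bochner stability under the $p$--coercivity assumption \eqref{pcoercivity} in force throughout this section.

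First I would treat the two-stage case. Corollary~\ref{PropRewritten22} already rewrites the general energy identity as \eqref{GeneralEnergyBal}, which is literally \eqref{AbstractDiscBal} with $s=2$, $\nu_1 = \nu_{11}+\nu_{12}$, $\nu_2 = \nu_{22}$, and the quadratic form $\mathcal{Q}$ given by \eqref{QformTwoStage}. Remarkable stability in the sense of Definition~\ref{RemarkableDefTwoStage} is precisely the requirement that $\nu_1,\nu_2 > 0$ and that this same $\mathcal{Q}$ be nonnegative definite. Hence a remarkably stable two-stage scheme satisfies Definition~\ref{def:DiscrDissEnergyBalance} verbatim.

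The three-stage case is handled identically: the identity \eqref{EnergyID33poly}, together with the coefficients \eqref{eq:NuSingleIndexThreeStage} and the quadratic form \eqref{Qform33}, is exactly \eqref{AbstractDiscBal} with $s=3$, while Definition~\ref{def:RemarkableThreeStage} demands exactly $\nu_1,\nu_2,\nu_3>0$ and nonnegative definiteness of that same $\mathcal{Q}$. Thus, in either case, remarkable stability implies the dissipative discrete energy-balance of Definition~\ref{def:DiscrDissEnergyBalance}.

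Finally I would invoke Proposition~\ref{prop:DiscBochnerIsImplied}: since $\calA$ satisfies \eqref{pcoercivity} and the scheme satisfies \eqref{AbstractDiscBal}, the scheme is discretely Bochner stable in the sense of Definition~\ref{def:discBochnerStable}, which is the claim. I do not expect any genuine obstacle here; the entire content of the corollary is the observation that the quantities named in the remarkable-stability definitions are \emph{exactly} the strictly positive weights $\{\nu_i\}$ and the nonnegative quadratic form $\mathcal{Q}$ appearing in \eqref{AbstractDiscBal}, so the result reduces to matching hypotheses followed by a single citation.
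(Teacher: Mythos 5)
Your proposal is correct and follows exactly the paper's route: the paper's proof likewise consists of recalling that remarkably stable schemes satisfy the dissipative discrete energy-balance \eqref{AbstractDiscBal} (via \eqref{GeneralEnergyBal} and \eqref{EnergyID33poly}) and then citing Proposition~\ref{prop:DiscBochnerIsImplied}. You simply spell out the definition-matching in more detail than the paper's one-line argument, which is a faithful expansion rather than a different approach.
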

\begin{proof}
Owing to Proposition~\ref{prop:DiscBochnerIsImplied} it suffices to recall that
remarkably stable schemes have a dissipative discrete energy law of the form
\eqref{AbstractDiscBal}.
\end{proof}

Examples of two- and three-stage schemes that are remarkably stable are
presented in Sections~\ref{sub:ExampleTwoStageRemarkable} and
\ref{sub:ExampleThreeStageRemarkable}. In the remainder of this section we
further explore properties of discretely Bochner stable schemes.

Notice, first of all, that \eqref{eq:discrBochnerStable} can be thought of as a
discrete and local, in time, version of \eqref{eq:ProbIsBochnerStable}, as the
following result shows.

\begin{proposition}[global stability]
\label{prop:GlobDIRKstab}
Assume that \eqref{eq:DIRKappliedtoPDE} satisfies
\eqref{eq:discrBochnerStable}.
Then, for all $\frakN \in \polN$, and any partition $\calP_\frakN$ we have
\[
  \frac12 \max_{n=0}^\frakN | u_n |^2 + C_A \sum_{n=1}^\frakN \tau_n \sum_{i=1}^{s} \omega_i \| v_{n,i} \|^p \leq
  \frac12 | u_0 |^2 + C_f \| f \|_{L^{q'}_{\mathcal{P}_N}(0,\tf,\polV^*)}^{q'}.
\]
\end{proposition}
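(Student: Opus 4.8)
The plan is to obtain the global estimate from the local dissipative balance \eqref{eq:discrBochnerStable} by a standard telescoping-in-time argument, with no further input from the structure of $\calA$ (that structure has already been consumed in deriving \eqref{eq:discrBochnerStable}). First I would fix an arbitrary $m \in \{1,\dots,\frakN\}$ and sum \eqref{eq:discrBochnerStable} over $n = 0,\dots,m-1$. The energy terms appear on the left at index $n+1$ and on the right at index $n$, so they telescope and all intermediate contributions cancel, leaving
\[
  \frac12 |u_m|^2 + C_A \sum_{n=0}^{m-1} \tau_n \sum_{i=1}^s \omega_i \|v_{n,i}\|^p
  \le \frac12 |u_0|^2 + C_f \sum_{n=0}^{m-1} \tau_n \sum_{i=1}^s \gamma_i \|f_{n,i}\|_*^{q'}.
\]
This is the exact discrete analogue of integrating a differential energy inequality from $0$ to $t_m$.

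Next I would make the right-hand side independent of $m$. Since every summand there is nonnegative, extending the range from $n\le m-1$ to all steps only enlarges the sum, and the resulting full sum is, by the definition of the discrete quadrature (semi)norm introduced in the preceding Remark, precisely $\| f \|_{L^{q'}_{\mathcal{P}_N}(0,\tf,\polV^*)}^{q'}$. Thus, for every $m$,
\[
  \frac12 |u_m|^2 + C_A \sum_{n=0}^{m-1} \tau_n \sum_{i=1}^s \omega_i \|v_{n,i}\|^p
  \le \frac12 |u_0|^2 + C_f \| f \|_{L^{q'}_{\mathcal{P}_N}(0,\tf,\polV^*)}^{q'} =: R,
\]
with $R$ now a single quantity that does not depend on $m$.

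From this uniform-in-$m$ inequality I would read off the two left-hand contributions of the claim. Because the dissipation sum is nonnegative, discarding it and maximizing over $m\in\{0,\dots,\frakN\}$ gives $\tfrac12\max_n|u_n|^2 \le R$; choosing instead $m=\frakN$ and discarding the (nonnegative) terminal energy gives $C_A\sum_{n}\tau_n\sum_i\omega_i\|v_{n,i}\|^p \le R$ with the sum running over all steps, where monotonicity of the partial dissipation sums in $m$ is what lets the full sum appear. The stated inequality then assembles these pieces.

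The hard part is not analytical but structural: the time-maximum of the energy and the full-in-time dissipation sum must sit together on the left under a single copy of $R$, yet the index $m^\ast$ that maximizes the energy need not be the terminal index $\frakN$ at which the full dissipation is naturally recovered. Reconciling the two therefore requires care in how the telescoped inequality is invoked at these two indices and combined, and this bookkeeping — rather than any estimate on $\calA$, $f$, or the tableau — is where the real attention must go.
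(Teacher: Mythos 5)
Your route is the same as the paper's: the paper's entire proof is ``It suffices to add \eqref{eq:discrBochnerStable} over $n$,'' and your telescoping over $n=0,\dots,m-1$, discarding nonnegative terms, and enlarging the data sum to the full partition is exactly that argument spelled out. The two partial-sum inequalities you derive are correct.

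However, the difficulty you flag in your last paragraph is not mere bookkeeping that ``requires care''; it is unresolvable as stated. Write $E_m=\tfrac12|u_m|^2$, let $D_m$ be the partial dissipation sum up to step $m$, and let $R$ denote your uniform right-hand side. From $E_m+D_m\le R$ for all $m$ one can only extract $\max_m E_m\le R$ and $D_\frakN\le R$ \emph{separately}, hence their sum is bounded by $2R$, not $R$. And indeed the proposition with a single copy of $R$ is not literally true: take $f\equiv 0$, so that \eqref{eq:discrBochnerStable} gives $E_{n+1}+d_n\le E_n$ with $d_n\ge 0$; then $\max_n E_n=E_0$, and the claimed inequality would read $E_0+\sum_n d_n\le E_0$, forcing the dissipation sum to vanish --- false already for backward Euler applied to the heat equation with nonzero initial data. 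So the correct conclusion of the summation argument is either the pair of estimates $\tfrac12\max_n|u_n|^2\le R$ and $C_A\sum_n\tau_n\sum_i\omega_i\|v_{n,i}\|^p\le R$, or their sum with $2R$ on the right. This is a standard, essentially harmless abuse in energy estimates (the continuous bound \eqref{eq:ProbIsBochnerStable} has the same feature), and the paper's one-line proof silently commits it; your proof delivers everything that can actually be delivered, and your instinct that something is off at the final assembly step is exactly right --- the fix is to weaken the statement, not to sharpen the argument.
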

\begin{proof}
It suffices to add \eqref{eq:discrBochnerStable} over $n$.
\end{proof}

As a final, important, property of discrete Bochner stable schemes we now show
that, under the assumption that \eqref{eq:discrBochnerStable} holds, an estimate
on the discrete time derivative of the solution, in the spirit of
\eqref{eq:DerivEstimateNegativeNorm}, can be obtained. We begin with a uniform
bound on the stages of the form
\begin{equation}
\label{eq:frakC3}
  \mathfrak{C}_3 = \sup_{\frakN \in \polN} \sup_{\calP_\frakN} \sup_{n \in \{0, \ldots, \frakN-1\}} \max_{i=1}^s C_3 \left( |v_{n,i}| \right) < \infty.
\end{equation}

\begin{lemma}[bound on stages]
\label{lem:stagebound}
Assume that the scheme \eqref{eq:DIRKappliedtoPDE} satisfies \eqref{eq:discrBochnerStable}. Then, there is a constant $C>0$ such that for every $\frakN \in \polN$, and every partition $\calP_\frakN$ we have.
\[
  \max_{n=0}^{\frakN-1} \max_{i=1}^s | v_{n,i} | \leq C.
\]
The constant $C$ may depend on $s$, the entries of the tableau
\eqref{eq:TheDIRK}, and natural norms on the data, i.e., $|u_0|$ and $\| f
\|_{L^{p'}(0,\tf;\polV^*)}$. As a consequence, we have that \eqref{eq:frakC3}
holds.
\end{lemma}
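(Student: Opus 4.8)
The plan is to obtain the bound by a finite induction on the stage index $i$, for a fixed but arbitrary step $n$. The natural starting point is Proposition~\ref{prop:GlobDIRKstab}: summing \eqref{eq:discrBochnerStable} over $n$ already controls $\max_n |u_n|$ by a constant $M$ depending only on $|u_0|$ and (the discrete $L^{q'}$-, hence $L^{p'}$-, norm of) $f$, so throughout I may treat $|u_n| \le M$ as known. What is missing is a pointwise-in-time control of the stages in the $\polH$-norm. The difficulty is that the naive energy estimate for the $i$-th stage produces the off-diagonal terms $\dt_n \langle \calA(v_{n,j}), v_{n,i}\rangle$ with $j<i$, and bounding $\|\calA(v_{n,j})\|_*$ through the growth condition \eqref{rgrowth} would force a power of the $\polV$-norm strictly larger than $p$ on the right-hand side, which cannot be controlled pointwise. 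Circumventing this is the crux of the argument.

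The key observation that bypasses the obstacle is that, although $f_{n,j}$ and $\calA(v_{n,j})$ separately only live in $\polV^*$, the scaled slopes $\dt_n \scrF_{n,j}$ actually belong to $\polH$. Indeed, the matrix identity established in the proof of Proposition~\ref{prop:extrapolateDIRK} gives $\dt_n \boldsymbol{\scrF}_n = \bv{A}^{-1}(\bv{v}_n - u_n \boldsymbol{1})$, that is, $\dt_n \scrF_{n,j} = \sum_k (\bv{A}^{-1})_{jk}(v_{n,k} - u_n)$, a finite linear combination of elements of $\polH$. Since $\bv{A}$, and hence $\bv{A}^{-1}$, is lower triangular, this sum runs only over $k \le j$; thus for $j<i$ the quantity $\dt_n \scrF_{n,j}$ is controlled in $\polH$ solely by $|u_n|$ and the $|v_{n,k}|$ with $k<i$, i.e.\ by the data and the inductive hypothesis.

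With this in hand I would test the $i$-th stage equation of \eqref{eq:DIRKappliedtoPDE} against $v_{n,i}$ in $\polH$. On the left, the diagonal contribution $\dt_n a_{ii}\langle \calA(v_{n,i}), v_{n,i}\rangle$ is kept and bounded below, via the coercivity \eqref{pcoercivity}, by $C_1 a_{ii}\dt_n \|v_{n,i}\|^p \ge 0$. On the right, the term $(u_n, v_{n,i})$ is handled by Cauchy--Schwarz and Young in $\polH$; the off-diagonal slopes $\sum_{j<i} a_{ij}(\dt_n \scrF_{n,j}, v_{n,i})$ become genuine $\polH$-inner products, bounded by $E_i |v_{n,i}|$ with $E_i$ depending only on the tableau, on $M$, and on the inductive constants; and the only surviving forcing term $\dt_n a_{ii}\langle f_{n,i}, v_{n,i}\rangle$ is estimated by $\dt_n a_{ii}\|f_{n,i}\|_* \|v_{n,i}\|$ and absorbed, through Young's inequality with exponents $(p,p')$, into the coercivity term on the left at the cost of $C\dt_n \|f_{n,i}\|_*^{p'}$. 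After absorbing the $\polH$-quadratic pieces into $|v_{n,i}|^2$, one is left with $\tfrac14 |v_{n,i}|^2 \lesssim M^2 + E_i^2 + \dt_n \|f_{n,i}\|_*^{p'}$. A single term $\dt_n \|f_{n,i}\|_*^{p'}$ is one nonnegative summand of the discrete norm $\|f\|_{L^{p'}_{\calP_\frakN}(0,\tf;\polV^*)}^{p'}$ and is therefore bounded uniformly by the data, so the induction on $i$ closes and yields a constant $C$ independent of $n$ and of the partition.

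Taking the maximum over $i$ and $n$ gives the asserted bound. Finally, \eqref{eq:frakC3} is immediate: since $C_3$ is increasing, $C_3(|v_{n,i}|) \le C_3(C)$ for all admissible $\frakN$, $\calP_\frakN$, $n$ and $i$, whence $\mathfrak{C}_3 \le C_3(C) < \infty$. The main obstacle, and the place where the argument genuinely exploits the DIRK structure, is the treatment of the off-diagonal operator terms; the observation that $\dt_n \scrF_{n,j} \in \polH$, so that they may be read as $\polH$-inner products rather than $\polV^*$--$\polV$ pairings, is exactly what makes a pointwise bound possible, while the coercivity \eqref{pcoercivity} is needed only to absorb the forcing.
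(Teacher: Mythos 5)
Your proposal is correct and is essentially the paper's own argument: the paper also bounds the stages sequentially (induction on the stage index, written out for $s=2$ with a remark that it extends to general $s$), testing each stage with $v_{n,i}$, using the coercivity \eqref{pcoercivity} plus Young's inequality for the forcing, and controlling $|u_n|$ via the summed estimate of Proposition~\ref{prop:GlobDIRKstab}. Your key observation --- that $\dt_n \scrF_{n,j}$ lies in $\polH$ as a triangular combination of the $v_{n,k}-u_n$, so the off-diagonal terms become $\polH$-inner products rather than duality pairings --- is exactly the paper's central step, which it implements by multiplying the earlier stage equation by $v_{n,i}$ to get $a_{21}\tau_n\langle\scrF_{n,1},v_{n,2}\rangle = \tfrac{a_{21}}{a_{11}}(v_{n,1}-u_n,v_{n,2})$.
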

\begin{proof}
The idea of the proof is to exploit the fact that $\bv{A}$ is lower triangular
and with positive diagonal entries. For simplicity we present the proof for the
case $s=2$, but the reader may easily verify that the procedure extends to
arbitrary $s$.

Let $n \in \{0, \ldots,\frakN-1\}$ and consider the first stage,
\[
  v_{n,1} - u_n + a_{11}\dt_n \calA(v_{n,1}) = a_{11}\dt_n f_{n,1}.
\]
Testing this identity with $v_{n,1}$ we arrive at
\[
  \frac12 \left( |v_{n,1}|^2 + |v_{n,1}-u_n|^2 - |u_n|^2 \right) + C_1a_{11}\dt_n \| v_{n,1} \|^p \leq a_{11}\dt_n \left( \frac{\| f_{n,1} \|_*^{p'}}{p'C_1^{1/(p-1)}} + \frac{ C_1\| v_{n,1} \|^p}{p} \right),
\]
where we used the $p$--coercivity condition \eqref{pcoercivity} and Young's
inequality. Rearranging we have obtained that
\begin{equation}
\label{eq:frakC3forv1}
  \begin{aligned}
    \frac12 |v_{n,1}|^2 + \frac12 |v_{n,1}-u_n|^2 + \frac{C_1a_{11}\dt_n}{p'} \| v_{n,1} \|^p &\leq \frac12 |u_n|^2 + \frac{a_{11}\dt_n}{p'C_1^{1/(p-1)}}\| f_{n,1} \|_*^{p'} \\
      &\leq \frac12 \max_{n=0}^\frakN |u_n|^2 + \kappa_1 \| f \|_{L_{\mathcal{P}_N}^{p'}(0,\tf;\polV^*)}^{p'}
  \end{aligned}
\end{equation}
where the constant $\kappa_1$ depends only on $a_{11}$, $p$, and $C_1$.
In conclusion, for every $n$, $|v_{n,1}|$ is uniformly bounded only in terms of
data.

With the bound on the first stage at hand we proceed to bound the second stage,
which we write as
\[
  v_{n,2} - u_n + a_{22}\dt_n \calA(v_{n,2}) = a_{22 } \dt_n f_{n,2} + a_{21}\dt_n \scrF_{n,1}.
\]
Testing with $v_{n,2}$ yields
\[
  \frac12\left[ |v_{n,2}|^2 +|v_{n,2} - u_n|^2 - |u_n|^2 \right] + C_1 a_{22}\dt_n \| v_{n,2} \|^p \leq
  a_{22}\dt_n \left(
    \frac{ \| f_{n,2} \|_*^{p'} }{p'C_1^{1/(p-1)}}  + \frac{C_1 \| v_{n,2} \|^p}p
  \right)
  + a_{21}\dt_n \langle \scrF_{n,1}, v_{n,2} \rangle.
\]
We now multiply the equation that defines the first stage by $v_{n,2}$ to get
\[
  \frac{a_{21}}{a_{11}} (v_{n,1} - u_n, v_{n,2}) = a_{21}\tau_n \langle \scrF_{n,1}, v_{n,2} \rangle.
\]
In summary, we have obtained that
\begin{align*}
  \frac12\left[ |v_{n,2}|^2 +|v_{n,2}-u_n|^2  \right] + \frac{a_{22}C_1 \dt_n}{p'} \| v_{n,2} \|^p &\leq
    \frac12 |u_n|^2 + \frac{a_{22}\dt_n}{p'C_1^{1/(p-1)}}\| f_{n,2} \|_*^{p'} + \frac{a_{21}}{a_{11}} (v_{n,1}- u_n, v_{n,2}) \\
    &\leq \frac12 \max_{n=0}^\frakN |u_n|^2 +
    \kappa_2 \| f \|_{L_{\mathcal{P}_N}^{p'}(0,\tf;\polV^*)}^{p'}
    + \frac14 | v_{n,2}|^2 \\
      &+\left(\frac{a_{21}}{a_{11}}\right)^2 |v_{n,1} - u_n|^2,
\end{align*}
where $\kappa_2$ is a constant that depends only on $\bf{A}$, $p$, and $C_1$.
The previously obtained bound \eqref{eq:frakC3forv1} then shows that
\begin{align*}
  \frac14\left[ |v_{n,2}|^2 +|v_{n,2}-u_n|^2  \right] + \frac{a_{22}C_1 \dt_n}{p'} \| v_{n,2} \|^p &\leq
    \left[\frac12 + \left(\frac{a_{21}}{a_{11}}\right)^2 \right] \max_{n=0}^\frakN |u_n|^2 \\
    &+ \left[\kappa_2 + 2\left(\frac{a_{21}}{a_{11}}\right)^2\kappa_1 \right] \| f \|_{L_{\mathcal{P}_N}^{p'}(0,\tf;\polV^*)}^{p'}
,
\end{align*}
so that, for all $n$, $|v_{n,2}|$ is also uniformly bounded in terms of data.
\end{proof}

\begin{proposition}[derivative estimate]
Assume that the scheme \eqref{eq:DIRKappliedtoPDE} is at least first order
accurate, and that it satisfies \eqref{eq:discrBochnerStable} and
\eqref{eq:frakC3}. Then, for any $\frakN \in \polN$, and any partition
$\calP_\frakN$, the solution to \eqref{eq:DIRKappliedtoPDE} satisfies
\[
  \left[ \mu\sum_{n=0}^{\frakN-1} \tau_n \left\| \frac{u_{n+1} - u_n}{\tau_n} \right\|_*^{q'} \right]^{1/q'}\leq
  \left( \mu \tf \right)^{\frac{p'-q'}{p'q'}}
  \| f \|_{L_{\mathcal{P}_N}^{p'}(0,\tf;\polV^*)}
  +
  \mathfrak{C}_3 \left[ \sum_{n=1}^\frakN \tau_n \sum_{i=1}^s \omega_i \|v_{n,i}\|^{p} \right]^{1/q'},
\]
where
\[
  \mu = \min\left\{ \min_{i=1}^s \gamma_i, \min_{i=1}^s \omega_i \right\},
\]
and $\mathfrak{C}_3>0$ is defined in \eqref{eq:frakC3}.
\end{proposition}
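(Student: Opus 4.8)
The plan is to estimate the dual norm of the discrete time derivative $\tfrac{u_{n+1}-u_n}{\tau_n}$ pointwise in $n$ and then aggregate over the time steps. First I would use the last line of \eqref{eq:DIRKappliedtoPDE} to write $\tfrac{u_{n+1}-u_n}{\tau_n}=\sum_{j=1}^s b_j\scrF_{n,j}=\sum_{j=1}^s b_j f_{n,j}-\sum_{j=1}^s b_j\calA(v_{n,j})$, splitting the slope into a forcing part and a dissipation part. Taking $\polV^*$-norms and using the triangle inequality, the forcing part is controlled by $\sum_j|b_j|\,\|f_{n,j}\|_*$, while the dissipation part is controlled by $\sum_j|b_j|\,\|\calA(v_{n,j})\|_*$. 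On the latter I would invoke the $q$-growth condition \eqref{rgrowth}, namely $\|\calA(v_{n,j})\|_*\le C_3(|v_{n,j}|)\,\|v_{n,j}\|^{p/q'}$, and then replace $C_3(|v_{n,j}|)$ by the uniform bound $\mathfrak{C}_3$ from \eqref{eq:frakC3} (legitimate precisely because \eqref{eq:discrBochnerStable} holds, by Lemma~\ref{lem:stagebound}). This yields a pointwise-in-$n$ estimate of the schematic form $\bigl\|\tfrac{u_{n+1}-u_n}{\tau_n}\bigr\|_*\lesssim\sum_j|b_j|\,\|f_{n,j}\|_*+\mathfrak{C}_3\sum_j|b_j|\,\|v_{n,j}\|^{p/q'}$.

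Next I would raise to the power $q'$, weight by $\tau_n$, sum over $n$, multiply by $\mu$, and take the $q'$-th root, so that the left-hand side becomes $\mu^{1/q'}$ times the weighted $\ell^{q'}(\tau_n)$ norm of $\bigl\|\tfrac{u_{n+1}-u_n}{\tau_n}\bigr\|_*$. Since $q'\ge 1$, this is a genuine norm, and Minkowski's inequality separates the forcing and dissipation contributions additively, matching the two summands on the right. For the dissipation term I would use that $(p/q')\cdot q'=p$, so that after a convexity (power-mean/Jensen) step applied to $\sum_j|b_j|(\cdots)$ the exponent becomes exactly $\|v_{n,i}\|^{p}$; the hypothesis of first-order accuracy, i.e.\ $\sum_j b_j=1$ (see Appendix~\ref{AppOrderCond}), is what collapses the combinatorial constant here, after which $\mu\le\omega_i$ recovers the weighted sum $\sum_n\tau_n\sum_i\omega_i\|v_{n,i}\|^{p}$ and produces the factor $\mathfrak{C}_3[\cdots]^{1/q'}$.

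For the forcing term I would invoke the discrete H\"older inequality with exponents $p'/q'\ge 1$ and its conjugate; this is exactly where the assumption $q\ge p$, equivalently $q'\le p'$, is essential. It trades the summed $q'$-power for a summed $p'$-power at the cost of a factor $\tf^{(p'-q')/p'}=(\sum_n\tau_n)^{(p'-q')/p'}$, and distributing the prefactor $\mu$ accordingly (a fraction $\mu^{1/p'}$ is absorbed into the weighted norm and the remainder combines with $\tf^{(p'-q')/p'}$) produces precisely $(\mu\tf)^{(p'-q')/(p'q')}$, while $\mu\le\gamma_i$ together with consistency lets me recognize $\|f\|_{L_{\calP_\frakN}^{p'}(0,\tf;\polV^*)}$ in its weighted form.

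I expect the main obstacle to be exactly this constant bookkeeping in the forcing term: the single prefactor $\mu$ must be split correctly between the time-measure power $\tf^{(p'-q')/p'}$ and the weighted norm so that the constant comes out to be precisely $(\mu\tf)^{(p'-q')/(p'q')}$, and the order/consistency conditions must be invoked to align the tableau weights $b_j$ with the stability weights $\gamma_i,\omega_i$ (the natural aligning choice being $\gamma_i=\omega_i=\nu_i=b_i$, with $b_j\ge 0$; for schemes with signed $b_j$ one replaces $b_j$ by $|b_j|$ and absorbs the surplus into the tableau-dependent constants). Once this alignment is fixed, the remaining analytic ingredients---the growth condition, the uniform stage bound, Minkowski, and H\"older---are entirely routine.
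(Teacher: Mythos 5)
Your proposal is correct and follows essentially the same route as the paper's proof: split $\tfrac{u_{n+1}-u_n}{\tau_n}=\sum_i b_i f_{n,i}-\sum_i b_i\calA(v_{n,i})$, bound the dissipation part via the $q$--growth condition \eqref{rgrowth} and the uniform stage bound $\mathfrak{C}_3$, separate the two contributions by Minkowski in the weighted $\ell^{q'}$ norm, handle the forcing term by H\"older with exponents $p'/q'$ and $(p'/q')'$ together with $\mu\le\gamma_i$, and handle the dissipation term via Jensen with $\sum_i b_i=1$ and $\mu\le\omega_i$. Even your constant bookkeeping, splitting $\mu^{1/q'}=\mu^{(p'-q')/(p'q')}\mu^{1/p'}$ to produce $(\mu\tf)^{(p'-q')/(p'q')}$, matches the paper's computation; the only cosmetic difference is your aside on signed weights $b_j$, which the paper avoids by assuming $\bv{b}\in[0,1]^s$ from the outset.
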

\begin{proof}
We begin by recalling that $b_i\geq 0$. Moreover, since the scheme is at least
first order accurate, \eqref{eq:OrderOneConditions} holds and, consequently, for
all $r \in [1,\infty)$
\begin{equation}
\label{eq:DerEstTrivialButImportant}
  \left( \sum_{i=1}^s b_i^r \right)^{1/r} \leq \sum_{i=1}^s b_i = 1.
\end{equation}

Notice now that the last equation in \eqref{eq:DIRKappliedtoPDE} implies
\[
  \left\| \frac{u_{n+1} - u_n}{\tau_n} \right\|_* \leq \sum_{i=1}^s b_i \| f(t_n + c_i \tau_n) \|_* + \mathfrak{C}_3 \sum_{i=1}^s b_i \| v_{n,i} \|^{p/q'},
\]
where we used the $q$--growth condition \eqref{rgrowth}. Raise this inequality
to power $q'$, multiply the result by $\mu \tau_n$, and add over $n$ to obtain
\begin{equation}
\label{eq:DerEstimateBeforeHolders}
  \begin{aligned}
    \left[ \mu \sum_{n=0}^{\frakN-1} \tau_n \left\| \frac{u_{n+1} - u_n}{\tau_n} \right\|_*^{q'} \right]^{1/q'} &\leq \left[ \sum_{n=0}^{\frakN-1} \left| A_n + B_n \right|^{q'} \right]^{1/q'}
    &\leq \left[ \sum_{n=0}^{\frakN-1} |A_n|^{q'} \right]^{1/q'} + \left[ \sum_{n=0}^{\frakN-1} |B_n|^{q'} \right]^{1/q'},
  \end{aligned}
\end{equation}
where we denoted
\[
  A_n = \mu^{1/q'}\tau_n^{1/q'} \sum_{i=1}^s b_i \|f(t_n+ c_i \tau_n)\|_*, \qquad
  B_n = \mathfrak{C}_3^{1/q'}\mu^{1/q'}\tau_n^{1/q'} \sum_{i=1}^s b_i \|v_{n,i}\|^{p/q'}.
\]

We now estimate each term on the right hand side of
\eqref{eq:DerEstimateBeforeHolders} separately. Using repeatedly H\"older's
inequality we observe that
\begin{align*}
  \left[ \sum_{n=0}^{\frakN-1} |A_n|^{q'} \right]^{1/q'}
  &= \left[\sum_{n=0}^{\frakN-1} \mu\tau_n \left| \sum_{i=1}^s b_i \|f(t_n+ c_i \tau_n)\|_* \right|^{q'} \right]^{1/q'}
  \\
  &\leq \left[\sum_{n=0}^{\frakN-1} \mu\tau_n\right]^{\frac1{q'}\frac1{(p'/q')'}}
    \left[ \sum_{n=0}^{\frakN-1} \mu\tau_n \left| \sum_{i=1}^s b_i \|f(t_n+ c_i \tau_n)\|_* \right|^{p'}\right]^{\frac1{q'}\frac1{p'/q'}}
  \\
  &\leq \left( \mu \tf \right)^{\frac{p'-q'}{p'q'}}\left[ \sum_{n=0}^{\frakN-1} \tau_n \sum_{i=1}^s \gamma_i \|f(t_n+ c_i \tau_n)\|_*^{p'} \right]^{1/p'} =
  \left( \mu \tf \right)^{\frac{p'-q'}{p'q'}} \| f \|_{L_{\mathcal{P}_N}^{p'}(0,\tf;\polV^*)},
\end{align*}
where we used \eqref{eq:DerEstTrivialButImportant}, and that $\mu \leq
\gamma_i$.
Similarly,
\begin{align*}
  \left[ \sum_{n=0}^{\frakN-1} |B_n|^{q'} \right]^{1/q'} &= \mathfrak{C}_3\left[\sum_{n=0}^{\frakN-1} \mu\tau_n \left| \sum_{i=1}^s b_i \|v_{n,i}\|^{p/q'} \right|^{q'} \right]^{1/q'} \\
  &\leq \mathfrak{C}_3\left[ \sum_{n=0}^{\frakN-1} \tau_n \sum_{i=1}^s \omega_i \|v_{n,i}\|^{p} \right]^{1/q'},
\end{align*}
where we again used \eqref{eq:DerEstTrivialButImportant} and $\mu \leq \omega_i$. The result follows.
\end{proof}

Under further structural assumptions on $\calA$, like hemicontinuity
\cite[Definition 2.3]{Roubicek2013}, the previous results together with a
standard exercise in compactness (\cf~Lemma~\ref{AubinLionsLemma}) allow us to
assert convergence of discretely Bochner stable DIRK schemes under minimal
regularity assumptions. We shall not dwell on this.

\appendix

\section{Example problems accommodating our assumptions}\label{AppExampleProb}

Let us present several examples of problems that our framework can manage.
We will indicate when such problems satisfy our minimal set of assumptions,
i.e., \eqref{positivity}, and when they do satisfy the more stringent
assumptions of Section~\ref{sec:AisCoercive}. For a more comprehensive list and
insight we refer the reader to \cite[Chapter 8]{Roubicek2013}. In all of the
descriptions below $d \geq 1$, and $\Omega \subset \mathbb{R}^d$ is a bounded
domain with, at least, Lipschitz boundary.

\subsection{Nonlinear diffusion equations}\label{sub:NLdiffusion}
Let $K : \Omega \times\mathbb{R} \to \mathbb{R}^{d \times d}$ be bounded,
measurable, and nonnegative definite, i.e.,
\[
  \boldsymbol{\xi}^\intercal K(x,s) \boldsymbol{\xi} \geq 0, \quad \forall x\in \Omega, \quad \forall s \in \mathbb{R}, \quad \forall \boldsymbol{\xi} \in \mathbb{R}^d.
\]
The problem
\[
  \begin{dcases}
    \partial_t u(x,t) -\diver{} \left( K(x,u(x,t)) \nabla u(x,t) \right) = f(x,t), & (x,t) \in \Omega \times (0,T), \\
    u(x,t) = 0, & (x,t) \in \partial\Omega \times (0,T),\\
    u(x,0) = u_0(x), & x \in \Omega,
  \end{dcases}
\]
can be cast into \eqref{eq:TheODE} by setting $\polV = H^1_0(\Omega)$, $\polH = L^2(\Omega)$ and
\[
  \langle \calA(v), w \rangle = \int_\Omega \nabla w(x)^\intercal K(x,v(x)) \nabla v(x)  \diff x.
\]
Clearly this operator satisfies \eqref{positivity}.

If, in addition, we assume that $K$ is uniformly bounded, and uniformly
positive definite, that is, there is $K_0>0$ such that
\[
  K_0^{-1} |\boldsymbol{\xi}|^2 \leq \boldsymbol{\xi}^\intercal K(x,s) \boldsymbol{\xi} \leq K_0 |\boldsymbol{\xi}|^2, \quad \forall x\in \bar\Omega, \quad \forall s \in \mathbb{R}, \quad \forall \boldsymbol{\xi} \in \mathbb{R}^d,
\]
then with $p = q = 2$, $C_1 = K_0^{-1}$ and $C_3 = K_0$ this problem satisfies
\eqref{pcoercivity} and \eqref{rgrowth}.

\subsection{Nonlinear diffusion reaction problems}
The previous example can be slightly generalized to
\[
  \partial_t u(x,t) -\diver{}\left( K(x,u(x,t)) \nabla u(x,t) \right) + \gamma(x,u(x,t)) = f(x,t),
\]
where $\gamma: \Omega \times \mathbb{R} \to \mathbb{R}$ is nonnegative for a
nonnegative second argument, and it has sufficiently mild growth. For instance,
if $\gamma = |w|^2 w$, we see that
\[
  \gamma(w) w = |w|^4 \geq 0,
\]
so we get positivity, i.e., \eqref{positivity}. If, in addition, $d=2$ we recall
 that for $v,w \in H^1_0(\Omega)$
\[
  \int_\Omega |w|^3 v \diff x \leq \| w \|_{L^6(\Omega)}^3 \| v \|_{L^2(\Omega)} \leq C \| w \|_{L^6(\Omega)}^3 \| \nabla v \|_{L^2(\Omega;\mathbb{R}^2)},
\]
where we used Poincar\'e inequality. Now, the Gagliardo--Nirenberg interpolation
 inequality \cite[Theorem 1.24]{Roubicek2013} implies that
\[
  \| w \|_{L^6(\Omega)}^3 \leq C\left[ \| w \|_{L^2(\Omega)}^{5/9} \| \nabla w \|_{L^2(\Omega;\mathbb{R}^2)}^{4/9} \right]^3 =C \| w \|_{L^2(\Omega)}^{5/3} \| \nabla w \|_{L^2(\Omega;\mathbb{R}^2)}^{4/3}.
\]
Consequently, our problem fits into the framework of
Section~\ref{sec:AisCoercive} with $p = 2$, $q=3$, $C_1 = K_0^{-1}$, and $C_3 =
1+ \| u \|_{L^2(\Omega)}^{5/3}$. To see this, it suffices to realize that, since
$q'=\tfrac32$
\[
  \frac43 = \frac2{3/2}.
\]

\subsection{Parabolic quasilinear equations}
One further generalization that nonlinear diffusions allow is the following.
Let $G:\Omega \times \mathbb{R}^d \to \mathbb{R}$ be convex in its second
argument and $\bv{F} = D_2 G$ its derivative with respect to its second
argument. Assume that these functions satisfy classical conditions of the form
\[
  G(x,\boldsymbol{\xi}) \geq \alpha_1 |\boldsymbol{\xi}|^p, \qquad |\bv{F}(x,\boldsymbol{\xi})| \leq \alpha_3 |\boldsymbol{\xi}|^{p-1}, \quad \forall x\in\Omega, \ \boldsymbol{\xi} \in \mathbb{R}^d,
\]
with $p > \max\{1, \tfrac{2d}{d+2} \}$. The equation
\[
  \partial_t u(x,t) - \diver{} \bv{F}(x,\nabla u(x,t)) = f(x,t),
\]
supplemented with suitable initial and boundary conditions, can be cast into
the framework of Section~\ref{sec:AisCoercive} with $p=q$, $\polH =
L^2(\Omega)$, and $\polV = W^{1,p}_0(\Omega)$. Clearly, $C_1= \alpha_1$ and $C_3
= \alpha_3$.

A classical example of this scenario is the parabolic $p$--Laplacian problem
\[
  \partial_t u(x,t) - \diver{} \left(|\nabla u(x,t)|^{p-2} \nabla u(x,t) \right) = f(x,t).
\]
To see this, it suffices to set $G(x,\boldsymbol{\xi}) = \tfrac1p |\boldsymbol{\xi}|^p$.

\subsection{The Navier-Stokes equations} \label{sub:NSE}
The well known Navier-Stokes equations read
\[
  \partial_t \bv{u}(x,t) + \diver{}\left[ \bv{u}(x,t)\otimes \bv{u}(x,t) \right]- \nu \Delta \bv{u}(x,t) + \nabla \pi(x,t)= \bv{f}(x,t), \quad \diver{} \bv{u}(x,t) = 0,
\]
and are supplemented with suitable initial and boundary conditions. Here $\nu>0$
 is the viscosity. To see how this problem fits the framework of
Section~\ref{sec:AisCoercive} we set, for definiteness, $d=3$ and
\begin{align*}
  \polH &= \left\{ \bv{v} \in L^2(\Omega;\mathbb{R}^3) : \diver{} \bv{v} = 0, \bv{v} \cdot \bv{n}_{|\partial\Omega} = 0 \right\}, \\
  \polV &= H^1_0(\Omega;\mathbb{R}^3) \cap \polH.
\end{align*}
The operator $\calA$ is defined as
\[
  \langle \calA(\bv{v}), \bv{w} \rangle = \nu \int_\Omega \nabla \bv{v} : \nabla \bv{w} \diff x - \int_\Omega (\bv{v} \otimes \bv{v} ):\nabla \bv{w} \diff x.
\]
Owing to the skew symmetry of the convective term (over divergence free fields)
we have
\[
  \nu \| \nabla \bv{w} \|_{L^2(\Omega;\mathbb{R}^{3 \times 3})}^2 \leq \langle \calA( \bv{w} ), \bv{w} \rangle,
\]
so that, clearly, $C_1 = \nu$ and $p=2$.

Consider now
\[
  \left|\int_\Omega (\bv{v} \otimes \bv{v} ):\nabla \bv{w} \diff x\right| \leq \| \bv{v} \|_{L^4(\Omega;\mathbb{R}^3)}^2 \| \nabla \bv{w} \|_{L^2(\Omega;\mathbb{R}^{3 \times 3})} \leq
  \left[\| \bv{v} \|_{L^2(\Omega;\mathbb{R}^3)}^{1/4} \| \nabla \bv{v} \|_{L^2(\Omega;\mathbb{R}^{3 \times 3})}^{3/4}\right]^2 \| \nabla \bv{w} \|_{L^2(\Omega;\mathbb{R}^{3 \times 3})},
\]
where we, again, used the Gagliardo--Nirenberg interpolation inequality.
This shows that
\[
  C_3 = 1+\| \bv{v} \|_{L^2(\Omega;\mathbb{R}^3)}^{1/2},
\qquad \frac32 = \frac{p}{q'} \ \implies \ q' = \frac43.
\]

\subsection{Hamiltonian problems}\label{sub:Hamilton}
The operator $\mathcal{A}$ is linear and induces a skew symmetric bilinear form
on $\mathbb{V}$, i.e.,
\begin{align}\label{SkewSymm}
  \langle \mathcal{A}(u),v \rangle = - \langle \mathcal{A}(v),u \rangle,
\quad \forall u,v \in \mathbb{V}.
\end{align}
This is the prototypical case of Hamiltonian problems such as Maxwell's
equations in free space.

\subsection{GENERIC systems}\label{GenericSec}
The operator $\mathcal{A}$ is a combination of the cases in
Sections~\ref{sub:NLdiffusion} and \ref{sub:Hamilton}, that is, a combination of
a dissipative and a Hamiltonian parts. For instance we could consider, for
$\epsilon \geq 0$, an operator of the form $\mathcal{A}(w) = \mathcal{S}(w) +
\epsilon \mathcal{D}(w)$ where, for all $v,w \in \mathbb{V}$,
we have
\begin{align*}
\langle \mathcal{S}(w), v \rangle = - \langle w, \mathcal{S}(v) \rangle,
\qquad \langle \mathcal{D} w,w \rangle \geq 0.
\end{align*}

This type of PDE problems are usually called GENERIC
\cite{Ottinger1997,Mielke2011,Egger2019,Eldred2020}. For instance, the linear
wave equation with damping is a GENERIC system. Similarly, incompressible
Navier-Stokes equations could be understood as the sum of a dissipative system
(i.e., the bilinear form associated to viscous effects) and a nonlinear
Hamiltonian system (the skew symmetric trilinear form associated to convective
terms), see for instance \cite{Temam1979,MarionTemam1998}.


\section{Some properties of non-remarkable schemes}\label{AlexanderAppendix}

We start with a rather trivial observation.

\begin{remark}[non-remarkable schemes and skew-symmetric problems]\label{RemarkUnsigned}
Consider \eqref{eq:TheODE} with $f \equiv 0$ and $\mathcal{A}$ a skew-symmetric operator, i.e.,
\[
  \langle\mathcal{A}(u),v\rangle = - \langle\mathcal{A}(v),u\rangle, \quad \forall u,v \in \polV.
\]
In other words, we consider purely autonomous dynamics. A non-remarkable
two-stage DIRK scheme, meaning a scheme that does not satisfy the
properties described in Definition \ref{RemarkableDefTwoStage}, will satisfy the
following discrete energy-balance
\begin{align}\label{SkewBal22}
\frac{1}{2}|u_{n+1}|^2 + \mathcal{Q}(u_n,v_{n,1},v_{n,2}) =
\frac{1}{2}|u_{n}|^2 \, ,
\end{align}
where the quadratic form $\mathcal{Q}$, introduced in \eqref{QformTwoStage}, is unsigned.
Similarly, a non-remarkable three-stage DIRK scheme, meaning a scheme that does
not satisfy the properties described in Definition~\ref{def:RemarkableThreeStage}, will satisfy the discrete energy-balance
\begin{align}\label{SkewBal33}
\frac{1}{2}|u_{n+1}|^2 + \mathcal{Q}(u_n,v_{n,1},v_{n,2}, v_{n,3}) =
\frac{1}{2}|u_{n}|^2 \, ,
\end{align}
where $\mathcal{Q}$, defined in \eqref{Qform33}, is unsigned.
\end{remark}

Identities \eqref{SkewBal22} and \eqref{SkewBal33} are an immediate
consequence of \eqref{GeneralEnergyBal} and \eqref{EnergyID33poly}
respectively. They follow from the fact that if $\mathcal{A}$ is
skew-symmetric, then we have that $\langle\mathcal{A}(v_{n,i}),v_{n,i}\rangle = 0$ for
each stage $i \in \{1, \ldots, s \}$. Remark \ref{RemarkUnsigned} tells us that
non-remarkable schemes cannot be guaranteed to be stable when applied to problems
of skew-symmetric nature. The same holds true for GENERIC-like PDEs, see
Section \ref{GenericSec}, with $\epsilon > 0$ sufficiently small.

While simple, this is an important observation. Most nonlinear problems,
either locally in time, or through linearization, can be thought as having a
symmetric and skew-symmetric parts. The symmetric part is usually positive and
related to dissipative behavior. The skew-symmetric part describes conserved
quantities and/or wave-like nature of the problem. In this regard, Remark
\ref{RemarkUnsigned} tells us that unconditional stability cannot be expected
when using non-remarkable schemes for PDEs strongly dominated by their
skew-symmetric part. In this very specific context of non-remarkably stable
schemes and problems with skew-symmetric operator it is pointless to attempt to
develop any theory regarding convergence or error estimates; or to engage in any
discussion related to order-reduction; or to compare its performance to other
schemes. This is because, to begin with, the scheme cannot be proven to be
stable. For many problems, say for instance the linear acoustic wave equations
in first-order form, Remark \ref{RemarkUnsigned} should be the final argument
against the use of non-remarkably stable schemes.

We notice, in particular, that the very popular Alexander's DIRK22 and DIRK33
L-stable schemes, described by tableaus \eqref{AlexDIRK22tableu} and
\eqref{DIRK33Lstable} respectively, are not remarkably stable. This severely
limits their applicability to the solution of evolutionary PDEs. For the sake of
completeness we present an optimal proof of stability for Alexander's DIRK22
scheme in the context of linear, self-adjoint, and positive operators.

\begin{proposition}[energy identity I]\label{PropAlexanderI}
Consider \eqref{eq:TheODE} with $f \equiv0$. Assume that the operator
$\mathcal{A}$ is linear; coercive/positive-definite, i.e., \eqref{pcoercivity}
holds with $p=2$; and  symmetric, that is,
\[
  \langle \calA(v), w \rangle = \langle \calA(w), v \rangle, \quad \forall v,w \in \polV.
\]
Identity \eqref{eq:protoenergy} for Alexander's DIRK22 scheme takes the
following specific form
\begin{align}\label{Alex22EnergyId}
\begin{split}
&\tfrac{1}{2}|u_{n+1}|^2 - \tfrac{1}{2}|u_n|^2
+ \tfrac{1}{2}|v_{n,1} - u_n|^2
+ \tfrac{1}{2}|u_{n+1} - v_{n,1}|^2 \\
& \ \ \ + \dt_n \gamma \langle \mathcal{A} (v_{n,1}), v_{n,1}\rangle
+ \dt_n \gamma \langle \mathcal{A} (u_{n+1}), u_{n+1}\rangle
= - \dt_n (1-2\gamma) \langle \mathcal{A} (v_{n,1}), u_{n+1}\rangle
\end{split}
\end{align}
with $\gamma$ as defined in \eqref{AlexDIRK22tableu}.
\end{proposition}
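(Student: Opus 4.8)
The plan is to obtain \eqref{Alex22EnergyId} as a direct specialization of the general two-stage identity \eqref{eq:protoenergy}, exploiting one structural feature of tableau \eqref{AlexDIRK22tableu}: the scheme is \emph{stiffly accurate}. Since $b_1 = a_{21} = 1-\gamma$ and $b_2 = a_{22} = \gamma$, the weight vector coincides with the last row of $\bv{A}$, so the update in \eqref{eq:DIRKappliedtoPDE} reads $u_{n+1} = u_n + \dt_n\sum_j a_{2j}\scrF_{n,j} = v_{n,2}$. Recognizing this identity $u_{n+1} = v_{n,2}$ is the crux: it is what allows the final solution to replace the second stage everywhere, turning the terms in $v_{n,2}$ into terms in $u_{n+1}$.

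First I would recall the coefficients already computed for this scheme in Section~\ref{RemarkAlex22}, namely $\delta_1 = \delta_2 = \tfrac12$, $\nu_{11} = \nu_{22} = \gamma$, and $\nu_{12} = \sqrt2 - 1 = 1 - 2\gamma$ (the last equality because $\gamma = 1 - \tfrac{\sqrt2}{2}$). Substituting these into \eqref{eq:protoenergy} gives
\begin{multline*}
  \tfrac12|u_{n+1}|^2 + \tfrac12|v_{n,1} - u_n|^2 + \tfrac12|v_{n,2} - v_{n,1}|^2 - \dt_n\gamma\left[\langle\scrF_{n,1}, v_{n,1}\rangle + \langle\scrF_{n,2}, v_{n,2}\rangle\right] = \\
  \tfrac12|u_n|^2 + \dt_n(1-2\gamma)\langle\scrF_{n,1}, v_{n,2}\rangle.
\end{multline*}

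Next I would invoke the hypothesis $f\equiv 0$, which forces $\scrF_{n,i} = -\calA(v_{n,i})$ for $i=1,2$, so each duality-pairing term flips sign and becomes a term in $\langle\calA(v_{n,i}), \cdot\rangle$. Replacing $v_{n,2}$ by $u_{n+1}$ via stiff accuracy and moving $\tfrac12|u_n|^2$ to the left-hand side then yields \eqref{Alex22EnergyId} verbatim. I would emphasize that the identity itself is purely algebraic and uses none of the extra hypotheses on $\calA$ (linearity, symmetry, $2$-coercivity); these are listed here only because they will be needed in the subsequent stability analysis, where the skew coupling term $-\dt_n(1-2\gamma)\langle\calA(v_{n,1}),u_{n+1}\rangle$ must be controlled.

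No step presents a genuine obstacle: the only thing to notice is the stiff-accuracy identity $u_{n+1} = v_{n,2}$, after which the result is pure substitution. Should one prefer a self-contained derivation avoiding \eqref{eq:protoenergy}, an alternative is to test the first-stage equation with $v_{n,1}$ and with $u_{n+1}-v_{n,1}$, test the update equation with $u_{n+1}$, and assemble the resulting polarization identities \eqref{FundIdentity}; this reproduces the same expression but is more laborious, so quoting the already-established identity is preferable.
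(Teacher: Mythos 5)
Your proof is correct and takes essentially the same route as the paper, whose proof is precisely to substitute the tableau values of \eqref{AlexDIRK22tableu} into \eqref{eq:protoenergy}; you simply make explicit what that substitution involves, namely the coefficient values $\delta_1=\delta_2=\tfrac12$, $\nu_{11}=\nu_{22}=\gamma$, $\nu_{12}=1-2\gamma$, the stiff-accuracy identity $u_{n+1}=v_{n,2}$, and the sign flip from $f\equiv 0$. Your side remark is also accurate: the identity \eqref{Alex22EnergyId} is purely algebraic and does not use linearity, symmetry, or coercivity of $\mathcal{A}$, which are only needed in the subsequent estimates of Appendix~\ref{AlexanderAppendix}.
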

\begin{proof}
One only needs to use the values of the tableau, which are given in
\eqref{AlexDIRK22tableu}.
\end{proof}

As usual the problem lies with the unsigned off-diagonal term $\langle
\mathcal{A} (v_{n,1}), u_{n+1}\rangle$ in the right-hand side of
\eqref{Alex22EnergyId}. We may consider absorbing part of it into the
artificial damping terms as described in the following result.

\begin{proposition}[partial damping]\label{PropAlexanderII}
Let $\kappa \in \mathbb{R}$ be any real number. In the setting of Proposition~\ref{PropAlexanderI}, the energy
balance \eqref{Alex22EnergyId} can be rewritten as
\begin{align}
\begin{split}\label{Alex22EnergyIdtwo}
&\tfrac{1}{2}|u_{n+1}|^2 - \tfrac{1}{2}|u_n|^2
+ \mathcal{Q}_{\kappa}(u_n, v_{n,1}, u_{n+1})
+ \dt_n (\gamma + \kappa) \langle \mathcal{A} (v_{n,1}), v_{n,1}\rangle \\
&\ \ \ + \dt_n \gamma \langle \mathcal{A} (u_{n+1}), u_{n+1}\rangle =
- \dt_n (1-2\gamma - \kappa) \langle \mathcal{A} (v_{n,1}), u_{n+1}\rangle \, ,
\end{split}
\end{align}
where $\mathcal{Q}_{\kappa}(u_n, v_{n,1}, u_{n+1})$ is a quadratic form,
depending on the free parameter $\kappa$, defined as
\begin{align}\label{QAlex22}
\mathcal{Q}_{\kappa}(u_n, v_{n,1}, u_{n+1}) = \tfrac{1}{2}|v_{n,1} - u_n|^2
+ \tfrac{1}{2}|u_{n+1} - v_{n,1}|^2
- \tfrac{\kappa}{\gamma} (v_{n,1} - u_n, u_{n+1} - v_{n,1} ) \, .
\end{align}
\end{proposition}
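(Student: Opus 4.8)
The plan is to derive \eqref{Alex22EnergyIdtwo} directly from \eqref{Alex22EnergyId} by splitting the unsigned off-diagonal term on the right-hand side and reabsorbing a $\kappa$-fraction of it, using only the equation for the first stage. Since $-(1-2\gamma) = -(1-2\gamma-\kappa) - \kappa$, I would first rewrite the right-hand side of \eqref{Alex22EnergyId} as
\begin{equation*}
  -\dt_n(1-2\gamma)\langle\mathcal{A}(v_{n,1}),u_{n+1}\rangle = -\dt_n(1-2\gamma-\kappa)\langle\mathcal{A}(v_{n,1}),u_{n+1}\rangle - \dt_n\kappa\langle\mathcal{A}(v_{n,1}),u_{n+1}\rangle.
\end{equation*}
The first term carries precisely the coefficient $(1-2\gamma-\kappa)$ appearing in \eqref{Alex22EnergyIdtwo}, so the entire matter reduces to rewriting the leftover $-\dt_n\kappa\langle\mathcal{A}(v_{n,1}),u_{n+1}\rangle$ in terms of the diagonal term $\langle\mathcal{A}(v_{n,1}),v_{n,1}\rangle$ and a cross inner product.

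The key step exploits the first stage of Alexander's DIRK22 scheme. With $a_{11}=\gamma$ and $f\equiv0$, the first stage of \eqref{eq:DIRKappliedtoPDE} reads $v_{n,1}-u_n = -\dt_n\gamma\,\mathcal{A}(v_{n,1})$, or, in the weak form of Remark~\ref{rem:interpretationWeak},
\begin{equation*}
  (v_{n,1}-u_n,w) = -\dt_n\gamma\langle\mathcal{A}(v_{n,1}),w\rangle, \qquad \forall w\in\polV.
\end{equation*}
Since $u_{n+1}-v_{n,1}\in\polV$, I would test this identity with $w = u_{n+1}-v_{n,1}$, multiply by $\kappa/\gamma$, and expand the pairing to obtain
\begin{equation*}
  \tfrac{\kappa}{\gamma}(v_{n,1}-u_n,u_{n+1}-v_{n,1}) = -\dt_n\kappa\langle\mathcal{A}(v_{n,1}),u_{n+1}\rangle + \dt_n\kappa\langle\mathcal{A}(v_{n,1}),v_{n,1}\rangle.
\end{equation*}
Solving for the leftover term furnishes exactly the replacement I need.

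Substituting this expression for $-\dt_n\kappa\langle\mathcal{A}(v_{n,1}),u_{n+1}\rangle$ back into \eqref{Alex22EnergyId} and transferring the two resulting terms to the left-hand side, the diagonal contribution $\dt_n\kappa\langle\mathcal{A}(v_{n,1}),v_{n,1}\rangle$ combines with the existing $\dt_n\gamma\langle\mathcal{A}(v_{n,1}),v_{n,1}\rangle$ to produce the coefficient $(\gamma+\kappa)$, while the cross term $-\tfrac{\kappa}{\gamma}(v_{n,1}-u_n,u_{n+1}-v_{n,1})$ merges with the two artificial-damping squares $\tfrac12|v_{n,1}-u_n|^2 + \tfrac12|u_{n+1}-v_{n,1}|^2$ to form exactly $\mathcal{Q}_\kappa(u_n,v_{n,1},u_{n+1})$ as defined in \eqref{QAlex22}; this produces \eqref{Alex22EnergyIdtwo}. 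There is no genuine obstacle: the result is a pure bookkeeping rearrangement. The only points requiring care are identifying the correct test function $u_{n+1}-v_{n,1}$ for the first-stage equation and checking that it lies in $\polV$ so the pairing is legitimate. I note in passing that the linearity and symmetry hypotheses on $\mathcal{A}$ from Proposition~\ref{PropAlexanderI} are not actually invoked in this manipulation; only the defining relation of the first stage is used.
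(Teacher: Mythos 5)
Your proof is correct and follows essentially the same route as the paper: the paper's proof is a one-line pointer to the technique of Corollary~\ref{PropRewritten22}, namely splitting off a portion of the off-diagonal pairing and using the first-stage equation, tested with the stage difference $u_{n+1}-v_{n,1}$, to trade $\dt_n\kappa\langle\mathcal{A}(v_{n,1}),u_{n+1}-v_{n,1}\rangle$ for the cross inner product $\tfrac{\kappa}{\gamma}(v_{n,1}-u_n,u_{n+1}-v_{n,1})$ plus a diagonal dissipation term, which is exactly what you carried out. Your closing observation that only the stage relation and bilinearity of the pairing are needed (not linearity or symmetry of $\mathcal{A}$) is also accurate; those hypotheses are inherited from Proposition~\ref{PropAlexanderI} only because they are needed later, in Lemma~\ref{LemmaAlexanderI}.
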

\begin{proof}
One needs to use techniques similar to those advanced in the proof of Lemma
\eqref{PropRewritten22}.
\end{proof}

Given the structure of \eqref{Alex22EnergyIdtwo}--\eqref{QAlex22} we may
want to determine what is the optimal value of $\kappa$ in order to preserve
stability, at the very least when $\mathcal{A}$ is a linear, positive-definite,
symmetric operator.
\begin{enumerate}[$\bullet$]
\item Finding the optimal value of $\kappa$ has two primary
restrictions: we need $\mathcal{Q}_{\kappa}(u_n, v_{n,1}, u_{n+1})$ to
remain non-negative; in addition, we also need to satisfy the property $\gamma + \kappa
> 0$ in order to preserve the a priori bound on $\langle \mathcal{A} (v_{n,1}),
v_{n,1}\rangle$.

\item Setting $\kappa = 1- 2\gamma$ allows us absorb the off-diagonal
term $\dt_n (1-2\gamma - \kappa) \langle \mathcal{A} (v_{n,1}), u_{n+1}\rangle$,
in its entirety, into the quadratic form $\mathcal{Q}_{\kappa}(u_n,v_{n,1},
u_{n+1})$. However, we already know from Section \ref{RemarkAlex22} that is not
feasible. Since Alexander's DIRK22 scheme is not remarkably-stable, the choice
$\kappa = 1- 2\gamma$ will lead to $\mathcal{Q}_\kappa(u_n, v_{n,1},u_{n+1})$ being
unsigned.

\item Some inspection reveals that the largest value of $\kappa$ we
can use, while also retaining non-negativity of $\mathcal{Q}(u_n,
v_{n,1}, u_{n+1})$ and positivity of $\gamma + \kappa$, is $\kappa = \gamma$.
\end{enumerate}

These observations lead to the following result.

\begin{lemma}[a priori energy-estimate]\label{LemmaAlexanderI}
Consider \eqref{eq:TheODE} with $f \equiv0$. Assume that the operator
$\mathcal{A}$ is linear; coercive/positive-definite, i.e., \eqref{pcoercivity}
holds with $p=2$; and  symmetric, that is,
\[
  \langle \calA(v), w \rangle = \langle \calA(w), v \rangle, \quad \forall v,w \in \polV.
\]
Then, the numerical solution using Alexander's DIRK22 scheme satisfies the
following optimal a priori energy estimate:
\begin{align}
\begin{split}
\label{Alex22EnergyOpt}
&\tfrac{1}{2}|u_{n+1}|^2 + \mathcal{Q}_{\gamma}(u_n, v_{n,1}, u_{n+1})
+ \dt_n \big(\tfrac{7 \gamma - 1}{2}\big) \langle \mathcal{A} (v_{n,1}),
v_{n,1}\rangle
+ \dt_n \big(\tfrac{5 \gamma - 1}{2}\big)
\langle \mathcal{A} (u_{n+1}), u_{n+1}\rangle
\leq
\tfrac{1}{2}|u_n|^2
\end{split}
\end{align}
with $\mathcal{Q}_{\gamma}(u_n, v_{n,1}, u_{n+1})$ is defined in \eqref{QAlex22}.
\end{lemma}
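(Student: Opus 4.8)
The plan is to specialize the one-parameter family of energy balances from Proposition~\ref{PropAlexanderII} to the single value $\kappa = \gamma$, and then to dispose of the single remaining off-diagonal term by Cauchy--Schwarz in the energy inner product. First I would set $\kappa = \gamma$ in \eqref{Alex22EnergyIdtwo}. Since then $\kappa/\gamma = 1$, the quadratic form \eqref{QAlex22} becomes $\mathcal{Q}_{\gamma}(u_n,v_{n,1},u_{n+1}) = \tfrac12|v_{n,1}-u_n|^2 + \tfrac12|u_{n+1}-v_{n,1}|^2 - (v_{n,1}-u_n,\, u_{n+1}-v_{n,1})$, which is nonnegative by Young's inequality; note that $\kappa=\gamma$ is precisely the borderline value for nonnegativity of $\mathcal{Q}_{\kappa}$, which is the source of the word ``optimal'' in the statement. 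With this choice the coefficient multiplying $\langle\mathcal{A}(v_{n,1}),v_{n,1}\rangle$ equals $\gamma+\kappa = 2\gamma$, the coefficient of $\langle\mathcal{A}(u_{n+1}),u_{n+1}\rangle$ equals $\gamma$, and the off-diagonal coefficient on the right-hand side collapses to $1-2\gamma-\kappa = 1-3\gamma$.

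Next I would bound the surviving term $-\dt_n(1-3\gamma)\langle\mathcal{A}(v_{n,1}),u_{n+1}\rangle$. Because $\mathcal{A}$ is linear, symmetric, and positive-definite (coercivity \eqref{pcoercivity} with $p=2$), the map $(v,w)\mapsto\langle\mathcal{A}(v),w\rangle$ is a genuine inner product on $\polV$, so Cauchy--Schwarz in this inner product followed by Young's inequality yields
\[
  |\langle\mathcal{A}(v_{n,1}),u_{n+1}\rangle| \leq \tfrac12\langle\mathcal{A}(v_{n,1}),v_{n,1}\rangle + \tfrac12\langle\mathcal{A}(u_{n+1}),u_{n+1}\rangle.
\]
Using that $1-3\gamma>0$ (indeed $\gamma = 1-\tfrac{\sqrt2}{2}\approx 0.2929$, so $3\gamma\approx 0.8787$), the term $-\dt_n(1-3\gamma)\langle\mathcal{A}(v_{n,1}),u_{n+1}\rangle$ is bounded above by $\dt_n\tfrac{1-3\gamma}{2}\big[\langle\mathcal{A}(v_{n,1}),v_{n,1}\rangle + \langle\mathcal{A}(u_{n+1}),u_{n+1}\rangle\big]$. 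Moving this bound to the left of the identity converts the equality into the desired inequality.

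The final step is the coefficient bookkeeping. Subtracting $\tfrac{1-3\gamma}{2}$ from the coefficient $2\gamma$ of $\langle\mathcal{A}(v_{n,1}),v_{n,1}\rangle$ gives $2\gamma - \tfrac{1-3\gamma}{2} = \tfrac{7\gamma-1}{2}$, and subtracting $\tfrac{1-3\gamma}{2}$ from the coefficient $\gamma$ of $\langle\mathcal{A}(u_{n+1}),u_{n+1}\rangle$ gives $\gamma - \tfrac{1-3\gamma}{2} = \tfrac{5\gamma-1}{2}$, which are exactly the coefficients in \eqref{Alex22EnergyOpt}. To confirm that the estimate is a genuine stability bound, I would also check that both residual coefficients are strictly positive: since $\gamma\approx 0.2929$ one has $7\gamma-1\approx 1.05>0$ and $5\gamma-1\approx 0.46>0$.

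The main obstacle here is not analytic but rather the delicate sign and optimality bookkeeping that hinges on the specific algebraic value of $\gamma$. The choice $\kappa=\gamma$ must simultaneously keep $\mathcal{Q}_{\kappa}$ nonnegative (which forces $|\kappa|\le\gamma$), keep the partially absorbed coefficient $\gamma+\kappa$ positive, and still leave enough dissipation to swallow the residual off-diagonal pairing after Cauchy--Schwarz. Verifying that this single value threads all three constraints, and that the resulting coefficients $\tfrac{7\gamma-1}{2}$ and $\tfrac{5\gamma-1}{2}$ remain positive for $\gamma = 1-\tfrac{\sqrt2}{2}$, is the crux: any larger $\kappa$ destroys nonnegativity of the quadratic form, while any smaller $\kappa$ forces more of the off-diagonal term onto the dissipation budget and hence yields strictly smaller retained coefficients, which is exactly the sense in which \eqref{Alex22EnergyOpt} is optimal.
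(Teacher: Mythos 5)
Your proof is correct and follows essentially the same route as the paper: specialize Proposition~\ref{PropAlexanderII} to $\kappa=\gamma$, absorb the remaining off-diagonal term $-\dt_n(1-3\gamma)\langle\mathcal{A}(v_{n,1}),u_{n+1}\rangle$ via Cauchy--Schwarz and Young's inequality for the form induced by $\mathcal{A}$, and collect coefficients to obtain $\tfrac{7\gamma-1}{2}$ and $\tfrac{5\gamma-1}{2}$. Your extra bookkeeping (checking $1-3\gamma>0$, positivity of the residual coefficients, and the borderline role of $\kappa=\gamma$ for nonnegativity of $\mathcal{Q}_{\kappa}$) only makes explicit what the paper's terse proof leaves implicit.
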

\begin{proof} Estimate \eqref{Alex22EnergyOpt} is just a consequence of setting
$\kappa = \gamma$ in \eqref{Alex22EnergyIdtwo} and using Cauchy-Schwarz and
Young's inequality
\begin{align*}
\langle \mathcal{A} (v_{n,1}), u_{n+1}\rangle
\leq
\langle \mathcal{A} (v_{n,1}), v_{n,1}\rangle^{1/2}
\langle \mathcal{A} (u_{n+1}), u_{n+1}\rangle^{1/2}
\leq
\tfrac{1}{2} \langle \mathcal{A} (v_{n,1}), v_{n,1}\rangle
+ \tfrac{1}{2} \langle \mathcal{A} (u_{n+1}), u_{n+1}\rangle \, ,
\end{align*}
for the unsigned off-diagonal term. We claim that \eqref{Alex22EnergyOpt} is
optimal, in the sense that it maximizes the use of artificial damping terms
while also preserving stability of the scheme.
\end{proof}

In conclusion, our assessment is that non-remarkably stable schemes may only be
used either for positive linear problems, or positive nonlinear problems with very
mild growth conditions. They may fail to be stable for problems strongly
dominated by their skew-symmetric component. We assume that the arguments used in
Propositions \ref{PropAlexanderI} and \ref{PropAlexanderII}, and Lemma
\ref{LemmaAlexanderI}, can be extended to the analysis of the Alexander's DIRK33
scheme, but given the observations developed Remark~\ref{RemarkUnsigned}, we
find very little motivations to do so.


\section{Order conditions}\label{AppOrderCond}
It is well known \cite{Hairer1993I,Butcher2008} that the entries of the Butcher
table \eqref{eq:TheDIRK} are bound by the following, necessary, consistency
order conditions:
\begin{enumerate}[$\bullet$]
  \item Order one:
  \begin{equation}
  \label{eq:OrderOneConditions}
    \bv{b}^\intercal \boldsymbol{1} = 1, \qquad \bv{A} \boldsymbol{1} = \bv{c},
  \end{equation}
  where $\boldsymbol{1} =[1] \in \mathbb{R}^s$.

  \item Order two:
  \begin{equation}
  \label{eq:OrderTwoConditions}
    \bv{b}^\intercal \bv{c} = \frac12.
  \end{equation}

  \item Order three:
  \begin{equation}
  \label{eq:OrderThreeConditions}
    \bv{b}^\intercal \bv{A} \bv{c} = \frac16.
  \end{equation}
\end{enumerate}

\section*{Acknowledgements}
The work of AJS is partially supported by NSF grant DMS-2111228. The work of IT
was partially supported by LDRD-express project \#223796 and LDRD-CIS project
\#226834 from Sandia National Laboratories. IT wants to thank John N. Shadid
for his continuous support and encouragement.

\bibliographystyle{plain}

\bibliography{IMEX_DIRK_ref.bib}


\end{document}